\documentclass[british]{scrartcl}

\title {A stable splitting of factorisation homology of generalised surfaces}
\author{Florian Kranhold}
\date{}

\input{preamble.tex}

\usepackage{scrlayer-scrpage}
\lohead{F.\ Kranhold}
\rohead{A stable splitting of factorisation homology}
\pagestyle{scrheadings}

\begin{document}

\maketitle

\let\svthefootnote\thefootnote
\let\thefootnote\relax\footnotetext{\acr{MSC} classes: \textsc{55r80, 57s05, 57r15, 55p47, 55p48, 18n70}}
\let\thefootnote\svthefootnote

\begin{abstract}
  For a manifold $W$ and an $\dE d$-algebra $A$, the factorisation homology
  $\smash{\int_W A}$ can be seen as a generalisation of the classical
  configuration space of labelled particles in $W$. It carries an action by the
  diffeomorphism group $\Diff_\partial(W)$, and for the generalised surfaces
  $W_{g,1}\coloneqq (\#^g S^n\times S^n)\setminus\mathring D^{2n}$, we have
  stabilisation maps among the quotients
  $\smash{\int_{\raisebox{1px}{\scriptsize
        $\smash{\dW{\smash{g,1}}}$}} A\sslash\Diff_\partial(W_{g,1})}$ which
  increase the genus $g$.  In the case where a highly-connected tangential
  structure $\theta$ is taken into account, we describe its stable homology in
  terms of the iterated bar construction $\B^{2n}A$ and a tangential Thom
  spectrum $\MT\theta$. We also consider the question of homological
  stability.\looseness-1
\end{abstract}

\section{Motivation and overview}
\subsection{Stable factorisation homology of oriented surfaces}

Let $S$ be a smooth oriented surface, possibly with boundary. For each
$r\ge 0$, the topological group $\Diff^+_\partial(S)$ of orientation-preserving
diffeomorphisms fixing the boundary acts on the configuration space
$C_r(\mathring S)$ of $r$ unordered particles in the interior $\mathring S$, and
the homotopy quotient $C_r(\mathring S)\sslash \Diff^+_\partial(S)$ is a moduli space for
surfaces that are diffeomorphic to $S$ and carry $r$ permutable punctures in
their interior.\looseness-1

If $S_{g,1}$ is an oriented surface of genus $g\ge 0$ with one boundary curve,
then we obtain maps
$C_r(\mathring S_{g,1})\sslash \Diff^+_\partial(S_{g,1})\to C_r(\mathring S_{g+1,1})\sslash
\Diff^+_\partial(S_{g+1,1})$ by forming the boundary-connected sum with
$S_{1,1}$ that carries no particles. It follows essentially from Harer’s
stability theorem \cite{Harer-1984} that the sequence of these maps is
homologically stable. Moreover, the stable homology has been described by
\cite{CFB-Tillmann-2001} in terms of an ‘undecorated’ part
$\on{hocolim}_{g\to \infty} \B\Diff^+_\partial(S_{g,1})$ and
$\B(\frS_r\wr \SO(2))$.\looseness-1

A more recent work \cite{Bonatto} studies generalisations of these configuration
spaces, which capture more of the local structure of the surface: We fix a
framed $E_2$-algebra $A$, i.e.\ a space on which the operad
$E_2\rtimes \on{SO}(2)$ acts, and consider, for each oriented surface $S$ as
above, the space $\int_S A$ of oriented embeddings
$\coprod^r \mathring D^2\hookrightarrow S$, for arbitrary $r\ge 0$, where each
disc carries a label in $A$ and where the framed $E_2$-action on $A$ is balanced
with compositions of embeddings of discs as in \cite{Salvatore}. This is an
instance of factorisation homology developed in \cite[§\,5.5]{Lurie}. Again, we
have an action of the diffeomorphism group $\Diff_\partial^+(S)$ on $\int_S A$,
now by postcomposing embeddings of discs, and stabilisation maps
$\int_{S_{g,1}} A\sslash \Diff_\partial^+(S_{g,1}) \to \int_{S_{g+1,1}} A\sslash
\Diff_\partial^+(S_{g+1,1})$.\looseness-1

As before, it turns out that this sequence is homologically stable
\cite[Thm.\,\textsc{e}]{Bonatto} and that the stable homology splits as follows:
Bonatto constructs a geometrically-flavoured semi-simplicial space
$D_\bullet(A)$ out of $A$, whose geometric realisation is an $E_\infty$-algebra,
and identifies the group-completion of the $A_\infty$-algebra
$\smash{\coprod_{g\ge 0} \int_{\smash{S_{\smash{g,1}}}}A\sslash
  \Diff^+_\partial(S_{g,1})}$ with the infinite loop space
$\Omega^\infty\MT\SO(2)\times \Omega\B |D_\bullet(A)|$, see
\cite[Thm.\,\textsc{g}]{Bonatto}.  Here $\MT\SO(2)$ is the oriented tangential
Thom spectrum as in \cite{Madsen-Weiss}. Via the group-completion theorem from
\cite{McDuff-Segal}, see also \cite[Apx.\,\textsc{q}]{Friedlander-Mazur}, this
implies that the colimit of the above stabilisations splits homologically into
$\on{hocolim}_{g\to \infty}\B\Diff^+_\partial(S_{g,1})$ and
$\Omega\B|D_\bullet(A)|$.\looseness-1

\subsection{Aim and setting of this work}

The main point of this work is to establish a description of Bonatto’s second
factor in homotopy-theoretic terms.  Our methods to pursue this aim allow for a
more general input: Recall that for each dimension $d$ and each $d$-dimensional
tangential structure $\theta\colon L\to\on{BO}(d)$ (in the sense of
\cref{defi:tang}), there is an operad $\dE[\theta]d$ of $\theta$-framed
embeddings of $d$-dimensional discs, and for each $\theta$-framed manifold
$(W,\ell_W)$ and each $\dE[\theta]d$-algebra $A$, we can consider the
factorisation homology $\int^\theta_W A$, which geometrically is defined exactly
as in the case of surfaces. An explicit model as a two-sided bar construction is
given in \cite{Kupers-Miller}; we recall it in \cref{subsec:FH}.\looseness-1

As before, one can consider the \emph{moduli space} of such manifolds $W$ with
decorations in $A$, essentially by letting the $\theta$-framing vary and
quotienting out the action of the group $\Diff_\partial(W)$, see
\cref{constr:modDiff} for details. The result is called $W^\theta[A]$ and plays
a central rôle in this paper.  We are interested in these spaces for the
following reasons:
\begin{itemize}
\item They generalise at the same time the aforementioned generalised
  configuration spaces of surfaces from \cite{Bonatto} and the classifying
  spaces for punctured diffeomorphism groups for manifolds of arbitrary
  dimension from \cite{CFB-Tillmann-2001,Bonatto-2020}.
\item They appear in a homotopy fibre sequence
  $\int^\theta_W A\to W^\theta[A]\to \caM^\theta_\partial(W,\ell_W)$, where
  $\caM^\theta_\partial(W,\ell_W)$ is the classical moduli space of
  $\theta$-framed manifolds of type $(W,\ell_W)$ studied in
  \cite{Galatius-RW-2017,Galatius-RW-2018}; see \cref{prop:hofib} for details.
\item If $\Omega^{d}_\theta X$ is the $\theta$-framed loop space for some
  retractive space $X$ over $L$, see \cref{ex:loop} for a definition, then
  $W^\theta[\Omega^{d}_\theta X]$ is the moduli space
  $\caM^\theta_\partial(W,\ell_W)\ula{X}$ of $\theta$-framed manifolds of type
  $(W,\ell_W)$, together with a map to $X$ over $L$.
\end{itemize}
In the case of generalised surfaces
$W_{g,1}\coloneqq (\#^g S^n\times S^n)\setminus \mathring D^{2n}$, we obtain
stabilisation maps $\dW[\theta]{g,1}[A]\to \dW[\theta]{g+1,1}[A]$ by taking
boundary-connected sums with an ‘empty’ $W_{1,1}$.  Here we assume our
tangential structure to be spherical in the sense of \cite{Galatius-RW-2018},
i.e.\ we require that $S^{2n}$ admits a $\theta$-framing, in order to ensure the
existence of well-behaved $\theta$-framings on all $W_{g,1}$.  We study the
above stabilisation maps and their colimit – or, in other words, the
group-completion of the $A_\infty$-algebra
$\dW[\theta]{*,1}[A]\coloneqq \coprod_{g\ge 0} \dW[\theta]{g,1}[A]$.

\subsection{Results}

Let $\theta\colon L\to\B\on{O}(d)$ be a tangential structure with connected $L$. For any
$\dE[\theta]{d}$-algebra $A$, we denote by $\B^d UA$ the ite\-rated bar
construction of its underlying $E_d$-algebra, the latter depending on a choice
of basepoint $b_0\in L$. Our main result is the following:\looseness-1

\begingroup
\def\thetheo{\ref{thm:B}}
\begin{thm}
  Let $\theta\colon L\to \B\on{O}(2n)$ be a spherical tangential
  structure with $n$-connected $L$, and let $A$ be an
  $\dE[\theta]{2n}$-algebra. Then there is an $A_\infty$-action of $\Omega L$ on the
  spectrum $\Sigma^{\infty-2n}\B^{2n} UA$ and we have a weak equivalence of loop
  spaces\looseness-3
  \[\textstyle \Omega\B\dW[\theta]{*,1}[A] \simeq
    \Z\times \Omega^\infty_0\MT \theta\times \Omega^\infty
    \bigl(\bigl(\Sigma^{\infty-2n}\B^{2n} U A\bigr)_\h{\Omega L}\bigr).\]
\end{thm}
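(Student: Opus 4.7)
The plan is to combine the homotopy fibre sequence of \cref{prop:hofib} with the McDuff--Segal group-completion theorem and the higher-dimensional Madsen--Weiss theorem of Galatius--Randal-Williams. Starting from the $E_1$-algebra $\dW[\theta]{*,1}[A]=\coprod_{g\ge 0}\dW[\theta]{g,1}[A]$, the homological stability results established earlier in the paper together with group-completion yield
\[
\Omega\B\dW[\theta]{*,1}[A]\ \simeq\ \Z\times \on{hocolim}_{g\to\infty}\dW[\theta]{g,1}[A],
\]
so it suffices to describe this colimit as a product of two infinite loop spaces.

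For the base of the stabilised fibre sequence
\[
\textstyle\int^\theta_{W_{g,1}}A\longrightarrow \dW[\theta]{g,1}[A]\longrightarrow \caM^\theta_\partial(W_{g,1},\ell_{W_{g,1}}),
\]
the assumption that $\theta$ is spherical and $L$ is $n$-connected places us in the setting of \cite{Galatius-RW-2018}, which yields $\on{hocolim}_{g\to\infty}\caM^\theta_\partial(W_{g,1},\ell_{W_{g,1}})\simeq \Omega^\infty_0\MT\theta$. Since this target is an infinite loop space, the stabilised fibre sequence splits up to homotopy and contributes the factor $\Omega^\infty_0\MT\theta$.

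The heart of the argument is the identification of the colimit of fibres
\[
\textstyle\on{hocolim}_{g\to\infty}\int^\theta_{W_{g,1}}A\ \simeq\ \Omega^\infty\bigl(\bigl(\Sigma^{\infty-2n}\B^{2n}UA\bigr)_\h{\Omega L}\bigr).
\]
To produce the $A_\infty$-action of $\Omega L$ on the spectrum $\Sigma^{\infty-2n}\B^{2n}UA$, I observe that the unary part of $\dE[\theta]{2n}$ is, up to homotopy, given by $\Omega L$; this induces an $A_\infty$-action on $UA$ compatible with its underlying $\E_{2n}$-structure, which transports to $\B^{2n}UA$ and thence to the shifted suspension spectrum. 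The equivalence itself is a scanning-type result: using the two-sided bar construction model of $\int^\theta_W A$ from \cite{Kupers-Miller} recalled in \cref{subsec:FH}, together with the fact that $W_{g,1}$ is $(n-1)$-connected and stabilises in higher degrees as $g\to\infty$, a scanning map into an infinite loop space built from $\B^{2n}UA$ becomes an equivalence in the limit, with the $\theta$-twist absorbed into $\Omega L$-homotopy orbits.

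The main obstacle will be controlling the scanning step $\Omega L$-equivariantly and accounting for the $(-2n)$-suspension shift. I plan to first treat the universal, essentially untwisted case, where the spectrum-level identification reduces to classical stable splittings of configuration spaces by iterated bar constructions, and then to descend to general spherical $\theta$ via a base-change argument that turns the fibrewise scanning into $\Omega L$-homotopy orbits. Combining the three contributions via the split stabilised fibre sequence and adding the $\Z$-factor from group-completion will then yield the claimed weak equivalence.
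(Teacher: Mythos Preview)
Your approach has a genuine gap at the ``heart of the argument''. The claimed identification
\[
\textstyle\on{hocolim}_{g\to\infty}\int^\theta_{W_{g,1}}A\ \simeq\ \Omega^\infty\bigl(\bigl(\Sigma^{\infty-2n}\B^{2n}UA\bigr)_\h{\Omega L}\bigr)
\]
is \emph{false} in general. The paper itself provides an explicit counterexample (just after \cref{eq:fib}): for $2n=2$, $\theta$ orientations, and $A$ the completion of $S^3$ with trivial $\on{SO}(2)$-action, the relevant map fails to induce an isomorphism on $H_5(-;\bF_2)$. The underlying issue is that passing to the colimit over $g$ does not kill the action of $\pi_1\caM^\theta_\partial(W_{g,1})$ on the homology of the fibre; only after applying Quillen's plus construction (equivalently, after group-completing the whole $E_1$-algebra) does the splitting appear. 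Your step $\Omega\B\dW[\theta]{*,1}[A]\simeq\Z\times\on{hocolim}_g\dW[\theta]{g,1}[A]$ already elides this plus construction, and your claim that the stabilised fibre sequence splits ``because the target is an infinite loop space'' is not a valid argument: a section gives a retract, not a product decomposition, and the map on fibres need not be acyclic (cf.\ the paper's remark that this is equivalent to the maximal perfect subgroup acting trivially, which is open in high dimensions and false for surfaces).

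The paper circumvents this entirely. It never tries to identify the colimit of the fibres. Instead it (i) rewrites $\dW[\theta]{*,1}[A]$ as an operadic pushforward $\caW^\theta\otimes^\bbL_{\dF[\theta]{2n}}\kappa^*A$ (\cref{prop:A}); (ii) proves, for any operad with homological stability, that the map $\caO\otimes^\bbL_\caP A\to(\caO\otimes^\bbL_\caP *)\times(E_\infty\otimes^\bbL_\caP A)$ becomes an equivalence \emph{after group-completion} (\cref{prop:B}), yielding \cref{cor:OHStang}; and (iii) identifies the spectrum $\B^\infty(E_\infty\otimes^\bbL_{\dE[\theta]{2n}}A)$ with $(\Sigma^{\infty-2n}\B^{2n}UA)_\h{\Omega L}$ by an $\infty$-categorical argument exhibiting $\dE[\theta]{2n}$ as an $L$-indexed colimit of copies of $E_{2n}$ (\cref{prop:C}). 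The $\Omega L$-action arises structurally from this colimit description, not from the unary operations acting on $A$ as you suggest. The point is that the splitting happens at the level of localised Pontrjagin rings, not at the level of stabilised spaces; your fibre-sequence strategy attempts to prove a strictly stronger (and false) unstable statement.
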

\endgroup

Here $\Omega^\infty_0\subseteq\Omega^\infty$ denotes the path-component of the
basepoint and $(-)_\h{\Omega L}$ denotes the homotopy quotient in the category
of spectra.  If $\theta$ is of the form $\B G\to \B\on{O}(2n)$ for some group
homomorphism $G\to \on{O}(2n)$, then we actually take homotopy orbits of a given
$G$-action on $\Sigma^{\infty-2n}\B^{2n} UA$. We point out that our assumptions
cover the aforementioned case of $2n=2$ and
$\theta\colon \B\SO(2)\to \B\on{O}(2)$.\looseness-1

Moreover, this result can be used to calculate the homology of
$\dW[\theta]{g,1}[A]$ in a range, as the stability results for moduli spaces of
surfaces \cite{Harer-1984,RW-2016} and high-dimensional manifolds
\cite{Galatius-RW-2018} rather directly imply the following
statement:

\begingroup
\def\thetheo{\ref{thm:A}}
\begin{thm}
  Let $\theta\colon L\to \B\on{O}(2n)$ be a spherical tangential structure with
  $\pi_1$-injective $\theta$, and let $A$ be an $\dE[\theta]{2n}$-algebra.  If
  $2n\ge 6$ (or $2n=2$ and $\theta$ is admissible to \cite[Thm.\,7.1]{RW-2016}),
  then the maps $W^\theta_{g,1}[A]\to W^\theta_{g+1,1}[A]$ induce isomorphisms
  in $H_i(-;\Z)$ for $i\le \frac12\kern1px g-\frac32$ (for $2n=2$, the slope is
  different, but coincides with the one from \cite[Thm.\,7.1]{RW-2016}).
\end{thm}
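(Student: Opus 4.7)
My plan is to deduce the stability of $W^\theta_{g,1}[A]$ from the known stability of the base of the homotopy fibre sequence
\[\textstyle \int^\theta_{W_{g,1}}A \longrightarrow W^\theta_{g,1}[A] \longrightarrow \caM^\theta_\partial(W_{g,1},\ell_g)\]
from \cref{prop:hofib}, via a Serre spectral sequence comparison along the stabilisation maps.

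The hypotheses on $\theta$ and $2n$ are precisely the applicability conditions of the Galatius--Randal-Williams theorem \cite{Galatius-RW-2018} (respectively \cite{RW-2016} for $2n=2$), which give that $\caM^\theta_\partial(W_{g,1},\ell_g)\to \caM^\theta_\partial(W_{g+1,1},\ell_{g+1})$ is an $H_i$-isomorphism in the claimed range. For the fibres, the boundary-connected-sum inclusion $W_{g,1}\hookrightarrow W_{g+1,1}$ has complement $W_{1,1}\smallsetminus\mathring D^{2n}$, which is $(n{-}1)$-connected; standard gluing properties of factorisation homology ($\otimes$-excision, implicit in the two-sided bar-construction model recalled in \cref{subsec:FH}) then exhibit $\int^\theta_{W_{g,1}}A\to\int^\theta_{W_{g+1,1}}A$ as a relative tensor product over a disc algebra with the pointed factor $\int^\theta_{W_{1,1}}A$, and a direct connectivity estimate yields an $H_i$-isomorphism range growing with~$g$. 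Combining both inputs via the Zeeman comparison theorem produces the claimed stability slope on the total space.

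The hard part is controlling the local coefficient systems in this comparison, i.e.\ the action of $\pi_1(\caM^\theta_\partial(W_{g,1}))$ on $H_*(\int^\theta_{W_{g,1}}A)$ and its compatibility with the stabilisation. Following the strategy of \cite{Bonatto} for $2n=2$, this is most naturally handled by placing $W^\theta_{*,1}[A]$ within the Randal-Williams--Wahl framework of categorical homological stability, applied to the braided monoidal category of $A$-decorated $\theta$-framed handlebodies; the $\pi_1$-injectivity of $\theta$ is precisely the input guaranteeing the applicability of this framework, and once it is in place the stability slope degrades at worst to that of the base, giving the theorem.
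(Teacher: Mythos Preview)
Your fibre-sequence approach has a genuine gap: the claim that the fibre map $\int^\theta_{W_{g,1}}A\to\int^\theta_{W_{g+1,1}}A$ is an $H_i$-isomorphism in a range growing with $g$ is simply false. The connectivity of the complement $W_{1,2}$ is $n-1$, which does not depend on $g$, so no connectivity or $\otimes$-excision estimate can produce a range that improves with the genus. Concretely, take $A=S^0$ with the trivial action, so that $\int^\theta_{W_{g,1}}A\simeq\coprod_r C_r(\mathring W_{g,1})$ by \cref{prop:lab}; on the $r=1$ summand the map is just $\mathring W_{g,1}\hookrightarrow\mathring W_{g+1,1}$, which on $H_n$ is $\Z^{2g}\hookrightarrow\Z^{2(g+1)}$ and is never an isomorphism. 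So the Zeeman comparison argument cannot get off the ground, and your final paragraph effectively concedes this by switching to a completely different strategy (Randal-Williams--Wahl) whose hypotheses you do not verify.

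The paper avoids this problem by not decomposing along the fibre sequence at all. Via \cref{prop:A} the space $W^\theta_{g,1}[A]$ is the realisation of $\B_\bullet(\dbW[\theta]g,\dbF[\theta]{2n},\kappa^*A)$, and the stabilisation acts only on the first slot, as $\on{stab}\times_{\frS_r}\on{id}\colon \caW^\theta_g(r)\times_{\frS_r}X^r\to\caW^\theta_{g+1}(r)\times_{\frS_r}X^r$ for $X=(\dbF[\theta]{2n})^p\kappa^*A$. The entire $A$-dependence sits in $X$ and is untouched by stabilisation, so one only needs stability of $\caW^\theta_g(r)\simeq\caM^\theta_\partial(W_{g,r+1},\ell_{g,r+1})$ for each fixed $r$, i.e.\ stability of moduli spaces with \emph{several} boundary components, which is exactly what \cite{Galatius-RW-2018} and \cite{RW-2016} provide. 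A levelwise homology isomorphism in the stated range then passes to the realisation via the skeletal-filtration spectral sequence. In short, the paper trades your vertical (fibre/base) decomposition, where the fibre does not stabilise, for a horizontal (simplicial) one, where each level manifestly does.
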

\endgroup

If $L$ is $n$-connected, as in \cref{thm:C}, then $\theta$ is in particular
$\pi_1$-injective. The case of $2n=2$ and $\theta$ being orientations is
covered by \cite[Thm.\,7.1]{RW-2016}; here the slope is
$i\le \frac23\kern1px g-\frac23$ and we recover
\cite[Thm.\,\textsc{e}]{Bonatto}. Combining both theorems, we obtain:

\begingroup
\def\thetheo{\ref{cor:A}}
\begin{cor}
  Let $\theta\colon L\to\on{BO}(2n)$ be a spherical tangential structure with
  $n$-connected $L$. If $2n\ge 6$ (or $2n=2$ and $\theta$ is admissible to
  \cite[Thm.\,7.1]{RW-2016}), we have, for each path-connected
  $\dE[\theta]{2n}$-algebra $A$ and each $g\ge 0$, isomorphisms
  \[H_i\bigl(\dW[\theta]{g,1}[A]\bigr) \cong
    H_i\bigl(\Omega_0^\infty\on{MT}\theta\times
    \Omega^\infty\bigl(\bigl(\Sigma^{\infty-2n}\B^{2n} U A\bigr)_\h{\Omega
      L}\bigr)\bigr)\]%
  for every $i$ small enough compared to $g$ to satisfy the conditions of \cref{thm:A}.
\end{cor}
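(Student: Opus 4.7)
The plan is to assemble \cref{cor:A} as a direct consequence of the two preceding theorems via the McDuff--Segal group-completion theorem, mirroring the strategy already indicated for the classical surface case in \S1.1.

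First, I would invoke \cref{thm:A} to pass from the homology of an individual $\dW[\theta]{g,1}[A]$ to its stable value: for $i$ in the stated range every stabilisation map induces an isomorphism on $H_i$, and hence $H_i(\dW[\theta]{g,1}[A]) \cong \colim_{g'\to\infty} H_i(\dW[\theta]{g',1}[A])$.

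Next, I would identify this colimit with the homology of a path-component of the group completion. The disjoint union $\dW[\theta]{*,1}[A] = \coprod_{g\ge 0} \dW[\theta]{g,1}[A]$ is an $A_\infty$-algebra under boundary-connected sum whose $\pi_0$ equals $\N$, generated by the stabilisation element. The McDuff--Segal theorem then yields $H_*(\dW[\theta]{*,1}[A])[\pi_0^{-1}] \cong H_*(\Omega\B\dW[\theta]{*,1}[A])$; decomposing by path-components and using that translation by the stabilisation element shifts the component index, each path-component of $\Omega\B\dW[\theta]{*,1}[A]$ has homology precisely $\colim_g H_*(\dW[\theta]{g,1}[A])$.

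Combining with \cref{thm:B}, which identifies $\Omega\B\dW[\theta]{*,1}[A]$ with $\Z \times \Omega_0^\infty\MT\theta \times \Omega^\infty((\Sigma^{\infty-2n}\B^{2n} UA)_\h{\Omega L})$, any single path-component of the right-hand side is homotopy equivalent to the product appearing in \cref{cor:A}, provided that the spectrum factor has path-connected infinite loop space. This is a routine connectivity check: since $A$ is path-connected, the iterated bar construction $\B^{2n} UA$ is at least $2n$-connected, so the spectrum $\Sigma^{\infty-2n}\B^{2n} UA$ has vanishing non-positive stable homotopy groups, and the $n$-connectedness of $L$ (which gives path-connectedness of $\Omega L$ for $n\ge 1$) ensures that the homotopy quotient preserves this connectedness.

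I expect no serious obstacle: the three ingredients---\cref{thm:A}, the group-completion theorem, and \cref{thm:B}---fit together routinely, with only the bookkeeping of path-components and the connectivity estimate above requiring real attention.
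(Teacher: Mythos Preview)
Your proposal is correct and follows exactly the route the paper intends: the paper states \cref{cor:A} without proof, as an immediate combination of \cref{thm:A} and \cref{thm:B}, and your argument spells out precisely the missing steps (the group-completion identification is already recorded in \cref{constr:Wgstab}, using \cref{lem:Actd} for path-connectedness of each $\dW[\theta]{g,1}[A]$). Your connectivity check for the second factor is a genuine detail the paper leaves implicit, and your reasoning there is sound.
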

\endgroup

We also discuss several special cases: If the $\dE[\theta]{2n}$-action on $A$ is
only partially defined, then one defines the factorisation homology
$\int^\theta_W A$ by first completing $A$ to an honest
$\dE[\theta]{2n}$-algebra, see \cref{constr:Completion}, and then applying the
above definition. For example, if $M\subseteq\dE[\theta]{2n}(1)$ is a submonoid
in the monoid of unary operations, then each based $M$-space $X$ can be regarded
as a partial $\dE[\theta]{2n}$-algebra. In this case, we get the following:

\begingroup
\def\thetheo{\ref{thm:C}}
\begin{thm}
  Let $\theta\colon L\to \on{BO}(2n)$ be a spherical tangential structure with
  $n$-connected $L$. If $M\subseteq \dE[\theta]{2n}(1)$ is a well-pointed
  submonoid and $X$ is a based $M$-space, regarded as a partial
  $\dE[\theta]{2n}$-algebra, then we have a weak equivalence of loop
  spaces\looseness-1
  \[\textstyle\Omega\B \dW[\theta]{*,1}[X]
    \simeq \Omega^\infty\MT\theta\times\Omega^\infty\Sigma^\infty(X_\h{M}).\]
\end{thm}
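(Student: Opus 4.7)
The plan is to apply \cref{thm:B} to the genuine $\dE[\theta]{2n}$-algebra $A\coloneqq \Comp(X)$ furnished by \cref{constr:Completion}, and then to simplify. Two identifications are needed: the elementary one $\Z\times\Omega^\infty_0\MT\theta\simeq\Omega^\infty\MT\theta$, which holds because $\pi_0\MT\theta\cong\Z$ for a spherical tangential structure, and, more substantially, an identification of $\Omega L$-equivariant spectra
\[\textstyle\bigl(\Sigma^{\infty-2n}\B^{2n}U\!\Comp(X)\bigr)_{\h{\Omega L}}\simeq\Sigma^\infty X_{\h{M}}.\]

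To obtain the latter, I would first unwind the monadic description of $\Comp$ to exhibit $U\!\Comp(X)$, as an $E_{2n}$-algebra carrying the residual $\Omega L$-action, as equivalent to the free $E_{2n}$-algebra on the twisted half-smash product $(\Omega L)_+\wedge_{M_+}X$, where $\Omega L$ acts by left multiplication on the first factor. This uses that, for $n$-connected $L$, the space $\dE[\theta]{2n}(1)$ of unary operations is a group-like monoid weakly equivalent to $\Omega L$, and that the higher operation spaces $\dE[\theta]{2n}(k)$ split, up to homotopy and $\Sigma_k$-equivariance, into $E_{2n}(k)$ decorated with $k$ copies of $\Omega L$; the partial $M$-structure on $X$ then coequalises the $M$-action on unary operations with the given $M$-action on $X$.

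Second, I would invoke the classical stable equivalence $\Sigma^{\infty-2n}\B^{2n}F_{E_{2n}}(Y)\simeq\Sigma^\infty Y$ for a well-pointed based space $Y$, where $F_{E_{2n}}$ denotes the free $E_{2n}$-algebra functor. Combined with the previous step this yields an $\Omega L$-equivariant equivalence $\Sigma^{\infty-2n}\B^{2n}U\!\Comp(X)\simeq\Sigma^\infty((\Omega L)_+\wedge_{M_+}X)$. Taking $\Omega L$-homotopy orbits, using that $\Sigma^\infty$ commutes with homotopy colimits together with the cancellation $((\Omega L)_+\wedge_{M_+}X)_{\h{\Omega L}}\simeq X_{\h{M}}$ (the free left $\Omega L$-action absorbs the homotopy quotient), then delivers the identification claimed above.

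The delicate point is the first substep: making rigorous the assertion that $\Comp$ commutes, up to $E_{2n}$-equivalence, with the induction along $M\hookrightarrow\dE[\theta]{2n}(1)\simeq\Omega L$. This requires a careful analysis of the monad associated to $\dE[\theta]{2n}$ and of the coequaliser enforced by the partial $M$-structure, and it must be compatible with the $\Omega L$-action on $\Sigma^{\infty-2n}\B^{2n}UA$ appearing in \cref{thm:B}. Once this is settled, the remaining steps reduce to standard manipulations with iterated bar constructions of free $E_d$-algebras and homotopy quotients of induced spaces.
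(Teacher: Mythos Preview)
Your route is quite different from the paper's, and considerably more circuitous. The paper does not invoke \cref{thm:B} here at all: \cref{thm:C} is established in \cref{sec:smaller} as a direct corollary of \cref{prop:Forward}, \emph{before} the iterated bar construction and the $\Omega L$-action of \cref{sec:eqBar} are ever introduced.

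The paper's argument is this. Since $M_+\subseteq\dE[\theta]{2n}$ is a proper suboperad, the completion $\hat X$ of the partial algebra is equivalent to the derived pushforward $\dE[\theta]{2n}\otimes^\bbL_{M_+}X$, so by definition $\dW[\theta]{*,1}[X]=\dW[\theta]{*,1}[\hat X]\simeq\dW[\theta]{*,1}[\dE[\theta]{2n}\otimes^\bbL_{M_+}X]$. Now \cref{rmk:Forward} (which rests only on \cref{cor:OHStang} and transitivity of pushforward, hence strictly less than \cref{thm:B}) reduces the problem to computing $\Omega\B(E_\infty\otimes^\bbL_{M_+}X)$. The decisive observation is that an operad map $M_+\to E_\infty$ can be taken to factor through the terminal map $M_+\to E_0$ sending every unary to $\mathbf{1}$; hence
\[E_\infty\otimes^\bbL_{M_+}X\;\simeq\;E_\infty\otimes^\bbL_{E_0}\bigl(E_0\otimes^\bbL_{M_+}X\bigr)\;=\;E_\infty\otimes^\bbL_{E_0}X_{\h M},\]
and Barratt--Priddy--Quillen finishes.

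What this buys is that the factorisation through $E_0$ collapses the entire $\dE[\theta]{2n}$-layer in one move: neither $\B^{2n}$ nor the $\Omega L$-action ever appears. Your approach, by contrast, first inflates the problem to the full $\Omega L$-equivariant spectrum $\Sigma^{\infty-2n}\B^{2n}U\hat X$ and must then cancel the $\Omega L$-orbits against the induction along $M\hookrightarrow\Omega L$. The step you yourself flag as delicate---identifying $U\hat X$ $\Omega L$-equivariantly with a free $E_{2n}$-algebra on an induced space---is exactly the piece the paper's argument renders superfluous. Your outline is plausible (for $\theta$ of the form $\B G\to\on{BO}(2n)$ one can make it rigorous via the Salvatore--Wahl semidirect-product description of $\dE[\theta]{2n}$), but carrying it through for general $\theta$ with the correct equivariance would require precisely the analysis the paper postpones to its Outlook. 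In short: your plan can probably be completed, but it detours through heavier machinery than the statement needs, and the step you leave open is genuinely the hard one in that approach.

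A small notational slip: in the paper $\Comp(X)\subseteq\bbP X$ denotes the subspace of composable operations, not the completion; the completed algebra is written $\hat X$.
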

\endgroup

Here $X_\h{M}$ is the based homotopy quotient. If $\theta$ is of the form
$\B G\to \on{BO}(2n)$ for some $(n-1)$-connected subgroup $G\subset \on{O}(2n)$,
then $\dE[\theta]{2n}(1)$ contains a submonoid equivalent to $G$, see
\cref{ex:BG}.  In the case of surfaces, $\theta$ being orientations, and $M$
being equivalent to $\on{SO}(2)$, \cref{thm:C} hence recovers
\cite[Cor.\,\textsc{d}]{Bonatto}. In the case of arbitrary dimensions and $X$
being $S^0$ with the trivial $\dE[\theta]{2n}(1)$-action, \cref{thm:C} becomes a
(puncture-stabilised) variation of \cite[Thm.\,\textsc{a}]{Bonatto-2020}, see
\cref{ex:punct}.\looseness-1

Finally, we deduce a description for a stabilised version of the aforementioned
moduli spaces $\caM^\theta_\partial(W,\ell_W)\ula{X}$ of $\theta$-framed moduli
spaces together with a map to a retractive space $X$ over $L$, whose fibre over
$b_0\in L$ we denote by $X_{\smash{b_0}}$:

\begingroup
\def\thetheo{\ref{cor:e}}
\begin{cor}
  Let $\theta\colon L\to\on{BO}(2n)$ be a spherical tangential structure with
  $n$-connected $L$. Moreover, let $X$ be a $2n$-connective retractive space
  over $L$.  Then we have a weak equivalence
  \[\on{hocolim}_{g\to \infty}\bigl(\caM^\theta_\partial(W_{g,1},\ell_{g,1})\ula{X}\bigr)^+\simeq\Omega^\infty_0\MT\theta\times\Omega^\infty\bigl((\Sigma^{\infty-2n}X_{b_0})_\h{\Omega L}\bigr).\]
\end{cor}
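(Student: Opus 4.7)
The plan is to specialise \cref{thm:B} to the $\dE[\theta]{2n}$-algebra $A=\Omega^{2n}_\theta X$ of $\theta$-framed $2n$-fold loops on the retractive space $X$, exploiting the identification $\dW[\theta]{g,1}[\Omega^{2n}_\theta X]\simeq \caM^\theta_\partial(W_{g,1},\ell_{g,1})\ula{X}$ recorded in the third bullet point of Section~1.2 and in \cref{ex:loop}. Under this identification, the $A_\infty$-multiplication on $\dW[\theta]{*,1}[\Omega^{2n}_\theta X]$ given by boundary-connected sum corresponds to the usual monoidal operation on $X$-decorated moduli spaces, so that \cref{thm:B} directly computes the group-completion of the latter.

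The next step is to compute the relevant bar construction. Picking the basepoint $b_0\in L$ trivialises the $\theta$-structure over a point, so that the underlying $E_{2n}$-algebra $U\Omega^{2n}_\theta X$ is the ordinary iterated based loop space $\Omega^{2n}X_{b_0}$ of the fibre, as spelled out in \cref{ex:loop}. Since $X$ is $2n$-connective over $L$, the fibre $X_{b_0}$ is $(2n{-}1)$-connected, and May's recognition principle for iterated loop spaces supplies a natural equivalence $\B^{2n}\Omega^{2n}X_{b_0}\simeq X_{b_0}$ of based spaces. Applying the shifted suspension spectrum functor then yields an $\Omega L$-equivariant equivalence
\[\Sigma^{\infty-2n}\B^{2n}U\Omega^{2n}_\theta X\simeq \Sigma^{\infty-2n}X_{b_0},\]
with the $\Omega L$-action on the right given by parallel transport in the fibration $X\to L$.

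It then remains to pass from $\Omega\B\dW[\theta]{*,1}[A]$ to the plus-completed colimit on the left-hand side of the claim. For this we invoke the group-completion theorem of McDuff--Segal: applied to the homologically stable $A_\infty$-algebra $\coprod_g\caM^\theta_\partial(W_{g,1},\ell_{g,1})\ula{X}$, where homological stability is provided by \cref{thm:A} (noting that $n$-connectedness of $L$ forces $\pi_1$-injectivity of $\theta$), it gives an equivalence
\[\Omega\B\dW[\theta]{*,1}[\Omega^{2n}_\theta X]\simeq \Z\times\on{hocolim}_{g\to\infty}\bigl(\caM^\theta_\partial(W_{g,1},\ell_{g,1})\ula{X}\bigr)^+,\]
whose $\Z$-factor matches the one in \cref{thm:B}. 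Cancelling this factor and substituting the identification $\B^{2n}UA\simeq X_{b_0}$ from the previous paragraph yields the desired formula.

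The main technical point is to verify that the $\Omega L$-action on $\Sigma^{\infty-2n}\B^{2n}U\Omega^{2n}_\theta X$ furnished by \cref{thm:B} agrees, under the May equivalence, with the parallel-transport action on $\Sigma^{\infty-2n}X_{b_0}$. This should follow from the naturality of May's recognition principle in the $\Omega L$-parameter, together with an unwinding of the $\theta$-framed loop-space construction of \cref{ex:loop}; it is the only place where one needs to track compatibilities between the two constructions, but does not require any new input beyond what goes into \cref{thm:B}.
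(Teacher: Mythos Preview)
Your approach matches the paper's: apply \cref{thm:B} to $A=\Omega^{2n}_\theta X$, use the identification $U\Omega^{2n}_\theta X\simeq \Omega^{2n}X_{b_0}$ from \cref{ex:loop} together with $\B^{2n}\Omega^{2n}X_{b_0}\simeq X_{b_0}$ (May's recognition, using $2n$-connectivity of the fibre), and then pass from $\Omega\B$ to the plus-construction of the hocolim via group-completion.

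Two minor remarks. First, the group-completion step does not rely on homological stability from \cref{thm:A}; what is needed for the McDuff--Segal theorem to produce the form $\Z\times(\on{hocolim})^+$ is that each $\caM^\theta_\partial(W_{g,1},\ell_{g,1})\ula{X}$ be path-connected, so that $\pi_0$ of the monoid is $\N$ (cf.\ \cref{constr:Wgstab}). Second, your final paragraph is unnecessary for the statement as written: the corollary does not assert that the $\Omega L$-action on $\Sigma^{\infty-2n}X_{b_0}$ is the parallel-transport action, only that it is whatever action \cref{thm:B} produces transported along the May equivalence. The paper itself records the identification with parallel transport merely as an expectation in the closing Outlook, not as something proved.
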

\endgroup

\subsection{Strategy}

For the most part of this paper, we work in the model category of (compactly
generated) spaces, and consider algebras over topological operads. Only the last
section is written in the language of $\infty$-categories, after translating the
leftover question into this setting.\looseness-1

First, we recall from \cite{Tillmann-2000,BBPTY} the generalised surface operad
$\caW^\theta$, whose operation spaces are mo\-du\-li spaces of $\theta$-framed
manifolds of type $W_{g,1}$ for varying $g$, together with several embedded
discs that serve as inputs. We then establish a zig-zag\linebreak
$\smash{\dE[\theta]{2n}\stackrel{\kappa}{\leftarrow}\dF[\theta]{2n}\to
  \caW^\theta}$ of operad maps, where $\kappa$ is an equivalence. Very
generally, for each operad map $\rho\colon \caP\to \caO$, the forgetful functor
$\rho^*$ from $\caO$-algebras to $\caP$-algebras admits a (derived) left-adjoint
$\caO\otimes^\bbL_\caP({-})$, called \emph{pushforward}.  For any
$\dF[\theta]{2n}$-algebra $B$, the pushforward
$\caW^\theta\otimes^\bbL_{\dF[\theta]{2n}} B$ has a canonical grading by genus,
and we show the following:\looseness-1

\begingroup
\def\thetheo{\ref{prop:A}}
\begin{prop}
  Let $\theta\colon L\to \on{BO}(2n)$ be a spherical tangential structure. Then
  we have, for each $\dE[\theta]{2n}$-algebra $A$, a graded equivalence
  $\smash{\dW[\theta]{*,1}[A]\simeq \caW^\theta\otimes^\bbL_{\dF[\theta]{2n}}
    \kappa^*A}$.
\end{prop}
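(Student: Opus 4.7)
The plan is to express both sides as realisations of simplicial spaces and match them grading-wise. Using the standard two-sided bar construction model for the derived pushforward along the operad map $\dF[\theta]{2n}\to \caW^\theta$, we have
\[\caW^\theta\otimes^\bbL_{\dF[\theta]{2n}} \kappa^* A \;\simeq\; \lvert B_\bullet(\caW^\theta, \dF[\theta]{2n}, \kappa^* A)\rvert,\]
where $B_p = \caW^\theta\circ(\dF[\theta]{2n})^{\circ p}\circ\kappa^*A$ uses the operadic composition product. Because operadic composition in $\caW^\theta$ adds genera while $\dF[\theta]{2n}$-operations have genus zero, this simplicial space splits as a coproduct indexed by $g$, whose $g$-th summand is $B_\bullet(\caW^\theta_g, \dF[\theta]{2n}, \kappa^*A)$, with $\caW^\theta_g$ the arity-graded collection of genus-$g$ operations regarded as a right $\dF[\theta]{2n}$-module.

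The next step is to disentangle the $\Diff_\partial(W_{g,1})$-quotient implicit in $\caW^\theta_g$. By construction, the operation spaces are homotopy quotients $\caW^\theta_g(k)\simeq\Emb^\theta(\coprod^k D^{2n},W_{g,1})\sslash \Diff_\partial(W_{g,1})$, and $\Diff_\partial(W_{g,1})$ acts on $\Emb^\theta(-,W_{g,1})$ by postcomposition, so it commutes with the $\dF[\theta]{2n}$-action, which reparametrises the source discs. Hence the bar construction carries a levelwise $\Diff_\partial(W_{g,1})$-action, and geometric realisation commutes with the resulting homotopy quotient, giving
\[\lvert B_\bullet(\caW^\theta_g,\dF[\theta]{2n},\kappa^*A)\rvert \;\simeq\; \lvert B_\bullet(\Emb^\theta(-,W_{g,1}),\dF[\theta]{2n},\kappa^*A)\rvert \sslash \Diff_\partial(W_{g,1}).\]

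Finally, I would identify the un-quotiented realisation with the factorisation homology $\int^\theta_{W_{g,1}} A$. This is precisely where the auxiliary operad $\dF[\theta]{2n}$ pays off: it is designed so that $\Emb^\theta(-,W_{g,1})$ is strictly a right $\dF[\theta]{2n}$-module, and the operad equivalence $\kappa\colon \dF[\theta]{2n}\to \dE[\theta]{2n}$, combined with the Kupers--Miller model recalled in \cref{subsec:FH}, shows that the bar construction computes $\int^\theta_{W_{g,1}} A$ up to weak equivalence. Passing to the $\Diff_\partial(W_{g,1})$-quotient then yields $\dW[\theta]{g,1}[A]$ by its defining formula in \cref{constr:modDiff}, assembling to the desired graded equivalence. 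The main obstacle is showing that $\caW^\theta_g$ really arises, as a right $\dF[\theta]{2n}$-module, as the diffeomorphism-quotient of $\Emb^\theta(-,W_{g,1})$: this requires a careful comparison of two operadic structures—the composition in $\caW^\theta$ coming from gluing cobordisms, and the precomposition of disc-embeddings with $\dF[\theta]{2n}$-operations—which is the very reason for introducing the intermediate operad $\dF[\theta]{2n}$ rather than working with $\dE[\theta]{2n}$ directly.
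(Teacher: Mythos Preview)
Your overall plan matches the paper's: split the bar construction by genus, lift the $\Diff_\partial(W_{g,1})$-quotient out of $\caW^\theta_g$, and compare the resulting right $\dF[\theta]{2n}$-modules. However, there is a genuine gap in the middle step.

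You write $\caW^\theta_g(k)\simeq\Emb^\theta(\coprod^k D^{2n},W_{g,1})\sslash\Diff_\partial(W_{g,1})$ and then identify the un-quotiented bar construction with $\int^\theta_{W_{g,1}} A$. Both of these are wrong in the same way: they forget that the $\theta$-framing on $W_{g,1}$ must be allowed to vary. The operation space $\caW^\theta_g(k)$ is the \emph{moduli space} $\caM^\theta_\partial(W_{g,k+1},\ell_{g,k+1})$, not an embedding space into a fixed $\theta$-framed manifold; correspondingly, the space you need before quotienting is built from $\ul\Emb^\theta(-,W_{g,1})$ (which carries an extra $\Fr^\theta_\partial(W_{g,1},\ell_{g,1})$-coordinate), not $\Emb^\theta(-,W_{g,1})$. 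This is exactly why $W^\theta_{g,1}[A]$ is defined in \cref{constr:modDiff} using the functor $\udbE[\theta]{W_{g,1}}$ rather than $\dbE[\theta]{W_{g,1}}$: its homotopy quotient is not $\bigl(\int^\theta_{W_{g,1}} A\bigr)\sslash\Diff_\partial(W_{g,1})$.

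The paper fixes this by working throughout with the right $\dF[\theta]{2n}$-module $\tilde\caW^\theta_g$ (which records an embedding into $D^{2n,\infty}$ and a framing $\ell\in\Fr^\theta_\partial(W_{g,1},\ell_{g,1})$), noting that the strict quotient $\tilde\caW^\theta_g/\Diff_\partial(W_{g,1})=\caW^\theta_g$ agrees with the homotopy quotient since the action is free and proper. It then compares $\tilde\caW^\theta_g$ with $\udbE[\theta]{W_{g,1}}$ (not $\dbE[\theta]{W_{g,1}}$) via an explicit intermediate right $\dF[\theta]{2n}$-module $\caV_g$, giving a zig-zag $\udbE[\theta]{W_{g,1}}\leftarrow \bV_g\to\tilde\bW^\theta_g$ of $\Diff_\partial(W_{g,1})$-equivariant equivalences. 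This zig-zag is precisely the ``main obstacle'' you flag but do not address; constructing $\caV_g$ and checking both maps are fibrewise equivalences is the actual content of the argument.
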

\endgroup

We point out that, instead of constructing the stabilisation maps and the
$A_\infty$-algebra structure on $\dW[\theta]{*,1}[A]$ ‘by hand’, we use this
equivalence and the fact that the right side canonically carries this structure.

We thus are left to understand the group-completion of pushforwards to
$\caW^\theta$-algebras. Here we use that if $L$ is $n$-connected, then
$\caW^\theta$ is an operad with homological stability in the sense of
\cite{BBPTY}. For such operads $\caO$, a description of the group-completion
$\Omega\B(\caO\otimes_\caP^\bbL A)$ has been established in
\cite{BBPTY,Bianchi-Kranhold-Reinhold} in the case where $\caP=E_0$ (the operad
of based spaces) and where $\caP$ is the operad of based $G$-spaces for some
topological group $G$ mapping to $\caO(1)$. Both results are based on the
observation that in the derived setting, $\dE\infty$ is the terminal operad, and
hence each operad $\caO$ has an essentially unique operad map to $\dE\infty$. We
show the following generalisation:\looseness-1

\begingroup
\def\thetheo{\ref{prop:B}}
\begin{prop}
  Let $\caO$ be an operad with homological stability, let $\caP$ be a proper
  operad with $\caP(0)\simeq *$, and let $\caP\to \caO$ be a map of operads
  under $E_0$. Then the map of $\caO$-algebras
  \[(\caO\otimes^\bbL_\caP A)\to (\caO\otimes^\bbL_\caP *) \times (\dE\infty
    \otimes^\bbL_\caP A)\]%
  that is comprised of $A\to *$ and $\caO\to E_\infty$ induces an equivalence on
  group-completions.
\end{prop}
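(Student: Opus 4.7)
The plan is to reduce the statement to the case $\caP = E_0$ already established in \cite{BBPTY} (with the $G$-equivariant refinement in \cite{Bianchi-Kranhold-Reinhold}), by simplicially resolving $A$ as a $\caP$-algebra via the two-sided bar construction.

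More precisely, properness of $\caP$ guarantees that the simplicial $\caO$-algebra $B_\bullet(\caO, \caP, A)$ with $k$-simplices given by the free $\caO$-algebra $\caO(\caP^{\circ k}(A))$ on the topological space $\caP^{\circ k}(A)$ is Reedy cofibrant, so its realisation models $\caO \otimes^\bbL_\caP A$; analogous models apply to $\caO \otimes^\bbL_\caP *$ and $\dE\infty \otimes^\bbL_\caP A$. Under these identifications, the map of the proposition is the geometric realisation of a map of simplicial $\caO$-algebras whose level-$k$ component is the free-algebra comparison
\[
\caO\bigl(\caP^{\circ k}(A)\bigr) \longrightarrow \caO(*) \times \dE\infty\bigl(\caP^{\circ k}(A)\bigr).
\]
By the $\caP = E_0$ case invoked above, each of these level maps is a group-completion equivalence.

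The final step is to commute group-completion with geometric realisation. In the $\infty$-categorical setting, $\Omega B$ is the left adjoint to the inclusion of grouplike $\caO$-algebras and hence preserves all homotopy colimits; in particular, it intertwines levelwise group-completion with realisation. On the target side, realisation commutes with the binary product by Reedy cofibrancy of both simplicial factors, which again follows from properness of $\caP$. Assembling the levelwise equivalences one obtains the desired equivalence
\[
\Omega B\bigl(\caO \otimes^\bbL_\caP A\bigr) \xrightarrow{\sim} \Omega B\bigl(\caO \otimes^\bbL_\caP *\bigr) \times \Omega B\bigl(\dE\infty \otimes^\bbL_\caP A\bigr).
\]

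The principal obstacle is the commutation of $\Omega B$ with geometric realisation in the topological model-categorical setup, which requires careful control of cofibrancy and of the homological group-completion theorem across the simplicial direction. The cleanest way to sidestep these technical issues, and the one most consistent with the final section of the paper, is to phrase the argument in the $\infty$-category of $\caO$-algebras, where the commutation is automatic from the universal property of the localisation, leaving only the already-settled levelwise $\caP = E_0$ case to invoke.
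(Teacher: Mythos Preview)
Your approach is correct and takes a genuinely different route from the paper's. Both arguments model $\caO\otimes^\bbL_\caP A$ by the bar construction, but the paper stays in the model-categorical world: it uses the genus grading to split $\tilde\bO=\coprod_g\tilde\bO_g$, forms the mapping telescope $\tilde\bO_\infty$, and invokes the internal lemma from the proof of \cite[Thm.\,5.4]{BBPTY} (that $\Psi_X\colon\tilde\bO_\infty(X)\to\tilde\bO_\infty(*)\times\tilde\bE_\infty(X)$ is a \emph{homology} equivalence) to show that the stabilised comparison $\Phi_\infty$ is a levelwise homology equivalence; a spectral-sequence argument and the group-completion theorem then finish. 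This never needs $\Omega\B$ to commute with realisation, trading that for explicit use of the propagator and telescope. You instead apply the full $\caP=E_0$ theorem of \cite{BBPTY} as a black box at each simplicial level and then pass to realisations via the fact that group-completion, as a left adjoint on $E_1$-monoids, preserves sifted colimits. This is conceptually cleaner but imports $\infty$-categorical input the paper only introduces in its final section.

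Two imprecisions worth fixing. First, you should use the \emph{reduced} bar construction $\B_\bullet(\tilde\bO,\tilde\bP,A)$ (arranging $A$ to be well-pointed as the paper does), since the $E_0$ case of \cite{BBPTY} is a statement about $\tilde\bO(X)$, not $\bO(X)$. Second, the level-$p$ target is $\tilde\bO(\tilde\bP^p(*))\times\tilde\bE_\infty(\tilde\bP^p(A))$, not literally $\caO(*)\times\tilde\bE_\infty(\tilde\bP^p(A))$; the hypothesis $\caP(0)\simeq *$ is precisely what gives $\tilde\bP^p(*)\simeq *$ and hence $\tilde\bO(\tilde\bP^p(*))\simeq\caO(0)$, so that the level-$p$ map is equivalent to the \cite{BBPTY} comparison for $X=\tilde\bP^p(A)$. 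You use this implicitly but never say so, and it is the one place where the assumption on $\caP(0)$ enters your argument.
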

\endgroup

Here, properness is a mild point-set topological assumption, see \cref{defi:opd}
for details. In the case of $\caP=\dF[\theta]{2n}$ and $\caO=\caW^\theta$, the
group-completion of the first factor is equivalent to the infinite loop space
associated with the tangential Thom spectrum $\on{MT}\theta$
\cite{Madsen-Weiss,Galatius-RW-2014}. Then \cref{prop:A,prop:B}, together with
the fact that the counit
$\dE[\theta]{2n}\otimes^\bbL_{\dF[\theta]{2n}} \kappa^* A\to A$ is an
equivalence, show:\looseness-1

\begingroup
\def\thetheo{\ref{cor:OHStang}}
\begin{cor}
  Let $\theta\colon L\to\on{BO}(2n)$ be a spherical tangential structure with
  $n$-connected $L$, and let $A$ be an $\dE[\theta]{2n}$-algebra. Then we have a weak
  equivalence of loop spaces\looseness-1
  \[\Omega\B\dW[\theta]{*,1}[A] \simeq \Omega_0^\infty\MT\theta
    \times \Omega\B(\dE\infty \otimes^\bbL_{\dE[\theta]{2n}} A).\]
\end{cor}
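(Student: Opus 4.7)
The plan is to chain together the three inputs mentioned in the paragraph preceding the statement. First, I would invoke \cref{prop:A} to replace $\dW[\theta]{*,1}[A]$ by the pushforward $\caW^\theta \otimes^\bbL_{\dF[\theta]{2n}} \kappa^* A$ of the $\dF[\theta]{2n}$-algebra $\kappa^* A$ along the zig-zag $\dE[\theta]{2n}\stackrel{\kappa}{\leftarrow}\dF[\theta]{2n}\to \caW^\theta$; since this is a graded equivalence, it is in particular one of $A_\infty$-algebras, and so $\Omega \B$ applied to both sides is well-defined and natural.

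Next, I would verify the hypotheses of \cref{prop:B} with $\caO=\caW^\theta$ and $\caP=\dF[\theta]{2n}$. The $n$-connectedness of $L$ (together with sphericality) is what makes $\caW^\theta$ an operad with homological stability in the sense of \cite{BBPTY}; the operad $\dF[\theta]{2n}$ is proper by construction and satisfies $\dF[\theta]{2n}(0)\simeq *$; and the map $\dF[\theta]{2n}\to\caW^\theta$ is a map of operads under $E_0$ since both $\dF[\theta]{2n}$ and $\caW^\theta$ are reduced. Granting this, \cref{prop:B} provides a group-completion equivalence
\[\textstyle\Omega\B(\caW^\theta\otimes^\bbL_{\dF[\theta]{2n}} \kappa^*A) \simeq \Omega\B(\caW^\theta\otimes^\bbL_{\dF[\theta]{2n}} *)\times \Omega\B(\dE\infty \otimes^\bbL_{\dF[\theta]{2n}} \kappa^*A).\]

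The final step is to identify each of the two factors. The first factor is the pushforward of the trivial $\dF[\theta]{2n}$-algebra, which by definition of $\caW^\theta$ is the graded space $\coprod_{g\ge 0}\caM^\theta_\partial(W_{g,1},\ell_{g,1})$; by the Madsen--Weiss theorem and its higher-dimensional generalisation \cite{Madsen-Weiss,Galatius-RW-2014}, its group-completion is $\Omega_0^\infty\MT\theta$. For the second factor, I would use the associativity of the pushforward along the composite $\dF[\theta]{2n}\to \dE[\theta]{2n}\to \dE\infty$, giving
\[\textstyle\dE\infty \otimes^\bbL_{\dF[\theta]{2n}} \kappa^*A \simeq \dE\infty \otimes^\bbL_{\dE[\theta]{2n}}\bigl(\dE[\theta]{2n}\otimes^\bbL_{\dF[\theta]{2n}} \kappa^*A\bigr)\simeq \dE\infty\otimes^\bbL_{\dE[\theta]{2n}} A,\]
where in the last step I apply the hypothesis that the counit $\dE[\theta]{2n}\otimes^\bbL_{\dF[\theta]{2n}} \kappa^*A\to A$ of the adjunction associated with the operad equivalence $\kappa$ is a weak equivalence.

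The main obstacle, modulo the cited results, is to make sure that the identification of the first factor is sufficiently natural, i.e.\ that the splitting provided by \cref{prop:B} is compatible with the structure-map-forgetting functor that sends a $\caW^\theta$-algebra to the underlying graded space. Everything else reduces to checking the hypotheses of \cref{prop:A,prop:B} and invoking standard properties of derived pushforward.
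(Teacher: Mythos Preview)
Your proposal follows essentially the same route as the paper: invoke \cref{prop:A}, apply \cref{prop:B} to split the group-completion, and then identify the two factors separately. The overall structure is correct.

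One technical point deserves care. For the second factor you write ``associativity of the pushforward along the composite $\dF[\theta]{2n}\to \dE[\theta]{2n}\to \dE\infty$'', but in the point-set framework of the paper there is no strict operad map $\dE[\theta]{2n}\to E_\infty$; the symbol $E_\infty\otimes^\bbL_{\dE[\theta]{2n}}A$ is only a homotopy type, \emph{defined} in \cref{constr:Einf} via the zig-zag $\dE[\theta]{2n}\leftarrow \dE[\theta]{2n}\times E_\infty\to E_\infty$. The paper's proof therefore does not invoke associativity along a composite; instead it observes that $\dE[\theta]{2n}\stackrel{\kappa}{\leftarrow}\dF[\theta]{2n}\leftarrow \dF[\theta]{2n}\times E_\infty\to E_\infty$ is itself a zig-zag of the type appearing in \cref{constr:Einf}, so $E_\infty\otimes^\bbL_{\dF[\theta]{2n}}\kappa^*A$ is by definition a model for $E_\infty\otimes^\bbL_{\dE[\theta]{2n}}A$. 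Your associativity-plus-counit argument can be made to work once everything is unpacked via the $({-})\times E_\infty$ replacement, but as written it presupposes a map that does not exist strictly.

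A smaller point: the first factor is the group-completion of $\caW^\theta(0)$ (this is \cref{rmk:p0con}, not the definition of $\caW^\theta$), and by \cref{cor:MT} this is $\Z\times\Omega_0^\infty\MT\theta$, not just $\Omega_0^\infty\MT\theta$; the statement you were given omits the $\Z$, which is a typo carried over from the paper's introduction.
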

\endgroup

This reduces the original question to understanding the group-completion of the
$\dE\infty$-algebra $\dE\infty\otimes^\bbL_{\dE[\theta]{d}} A$ for a given
$\dE[\theta]{d}$-algebra $A$.  We start by discussing several special cases in
\cref{sec:smaller}. %
A general answer is established in
\cref{sec:eqBar}:\looseness-1

\begingroup
\def\thetheo{\ref{prop:C}}
\begin{prop}
  Let $\theta\colon L\to\B\on{O}(d)$ be a tangential structure with connected
  $L$ and let $A$ be an $\dE[\theta]d$-algebra. Then the shifted suspension
  spectrum $\Sigma^{\infty-d}\B^d UA$ carries an $E_1$-action by the loop space
  $\Omega L$ and we have an equivalence of connective spectra
  \[\B^\infty(E_\infty\otimes^\bbL_{\dE[\theta]d} A)\simeq
  \bigl(\Sigma^{\infty-d}\B^d UA\bigr)_\h{\Omega L}.\]
\end{prop}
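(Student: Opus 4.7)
The plan is to present $\dE[\theta]{d}$ as $\dE{d}$ twisted by the loop space $\Omega L$, thereby reducing the computation of $\dE\infty\otimes^\bbL_{\dE[\theta]{d}} A$ to the absolute (non-tangential) case followed by a homotopy quotient by $\Omega L$. Having fixed a basepoint $b_0\in L$, I would obtain the underlying $\dE{d}$-algebra $UA$ by restricting $A$ along a fibre inclusion into $\dE[\theta]{d}$ over the constant $\theta$-structure $b_0$; loops in $L$ based at $b_0$ then act on $UA$ by monodromy, making $UA$ an algebra over (a rectification of) the semi-direct-product operad $\dE{d}\rtimes \Omega L$. Because $L$ is connected, one expects this construction to furnish an equivalence of $\infty$-categories between $\dE[\theta]{d}$-algebras and $\dE{d}\rtimes \Omega L$-algebras.

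With this reduction in place, the operad map $\dE[\theta]{d}\to \dE\infty$ factors through the two-step composite
\[ \dE{d}\rtimes \Omega L \;\longrightarrow\; \dE\infty\rtimes \Omega L \;\longrightarrow\; \dE\infty, \]
the first arrow inducing on algebras the usual $\dE{d}\to\dE\infty$ pushforward (preserving the $\Omega L$-action), the second inducing the homotopy quotient by $\Omega L$. Composing, one obtains
\[ \dE\infty \otimes^\bbL_{\dE[\theta]{d}} A \;\simeq\; \bigl(\dE\infty \otimes^\bbL_{\dE{d}} UA\bigr)_\h{\Omega L}. \]
For the inner pushforward I would invoke the recognition-principle-type identification $\B^\infty\bigl(\dE\infty\otimes^\bbL_{\dE{d}} B\bigr)\simeq \Sigma^{\infty-d}\B^d B$, valid for any $\dE{d}$-algebra $B$ (essentially due to May, and conveniently phrased in this form by Francis and Lurie). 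Applying it to $B = UA$ and using that $\B^\infty$ commutes with the homotopy colimit $(-)_\h{\Omega L}$ yields the claimed formula, with the $E_1$-action of $\Omega L$ on $\Sigma^{\infty-d}\B^d UA$ inherited from the $\Omega L$-action on $UA$.

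The main obstacle is the rectification statement in the first paragraph. When $\theta$ does not literally factor through a group homomorphism $G\to\on{O}(d)$, there is no topological group acting on $\dE{d}$ on the nose, and one has to work with the merely $A_\infty$-grouplike monoid $\Omega L$. Making the identification of $\dE[\theta]{d}$-algebras with $\dE{d}\rtimes\Omega L$-algebras precise thus requires either an explicit combinatorial rectification or a passage to $\infty$-operads; given the excerpt's remark that the final section works $\infty$-categorically, I would treat both the semi-direct product and the pushforward calculations via the Grothendieck construction for $\infty$-operads, which renders the splitting of the two-stage pushforward essentially formal and reduces the entire argument to the absolute recognition principle.
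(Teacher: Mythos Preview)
Your proposal is correct and follows essentially the same strategy as the paper: factor the map $\dE[\theta]{d}\to E_\infty$ through an intermediate operad capturing the $L$-twisting, identify the second stage as a homotopy quotient by $\Omega L$, and reduce the first stage to the untwisted recognition principle $\B^\infty(E_\infty\otimes^\bbL_{E_d}\frA)\simeq\Sigma^{\infty-d}\B^d\frA$ (which the paper also proves by a Yoneda argument using $\Omega^d\colon\caS^{\ge d}_*\simeq\Alg_{E_d}$). The paper likewise invokes that $\B^\infty$ is a left adjoint and hence commutes with the relevant colimit.

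The only real difference is packaging. Where you write the intermediate as the semi-direct product $E_\infty\rtimes\Omega L$, the paper uses Lurie's cocartesian operad $L^\sqcup$, together with the equivalence $\Alg_{L^\sqcup}\simeq\Fun(L,\Alg_{E_\infty})$, so that the final pushforward $E_\infty\otimes^\bbL_{L^\sqcup}(-)$ is literally $\colim_L$ on $E_\infty$-algebras. For the first stage, rather than rectifying to an operad $E_d\rtimes\Omega L$, the paper cites the presentation of $\dE[\theta]{d}$ as $\colim_L\Theta$ for a diagram $\Theta\colon L\to\Op_\infty$ with value $E_d$ (from Horel--Krannich--Kupers), and then runs a diagram chase among limits of algebra categories to see that $L^\sqcup\otimes^\bbL_{\dE[\theta]{d}}A$, viewed as a functor $L\to\Alg_{E_\infty}$, takes value $E_\infty\otimes^\bbL_{E_d}UA$. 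This is exactly the ``Grothendieck construction for $\infty$-operads'' route you anticipate in your final paragraph; the cocartesian operad $L^\sqcup$ is the device that makes it precise without ever having to build a strict semi-direct product with an $A_\infty$-group.
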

\endgroup

The main result, \cref{thm:B}, is obtained by combining these
propositions.

\subsection*{Acknowledgements}
First, I would like to thank Luciana Basualdo Bonatto who was always open to
discuss the ideas I developed by looking at her work from a different
perspective.  Second, I am grateful to Manuel Krannich, who helped me on
numerous occasions with various technical details, and who also suggested to
describe the general pushforward with $\infty$-categorical methods.
Third, I owe thanks to Andrea Bianchi for several illuminating discussions on
the subject. Finally, I thank the anonymous referee for several illuminating
comments, which helped improve the article significantly.\looseness-1

\section{Basic notions}
\subsection{Monads and operads}

\begin{defi}
  By a \emph{space}, we mean a compactly generated topological space.  Limits
  are taken in the category $\Top$ of compactly generated topological spaces.
\end{defi}

\begin{defi}
  Let $\caC$ be a category and let $T$ be a monad in $\caC$. We denote the
  forgetful functor from $T$-algebras back to $\caC$ by $U^T$. If
  $S\colon \caC\to\caC'$ is a right $T$-functor, then each $T$-algebra $A$ gives
  rise to a simplicial object in $\caC'$, which we call the \emph{two-sided bar
    construction}, given by
  $\B_\bullet(S,T,A)\coloneqq \left([p]\mapsto
      ST^pU^TA\right){}_{p\in\Del^\op}$.
\end{defi}

\begin{expl}\label{ex:hQuot}
  For each topological group $G$, the assignment $\bbG(X)=G\times X$ is a monad
  $\bbG$ in $\Top$, and $\bbG$-algebras are the same as $G$-spaces.  Moreover,
  the identity functor $\mathbb{1}$ is a right $\bbG$-functor by projecting
  $\bbG(X)$ to the second factor. If $G$ is well-pointed and $X$ is a $G$-space,
  then $|\B_\bullet(\mathbb{1},\bbG,X)|$ is our preferred model for the homotopy
  quotient $X\sslash G$.\looseness-1
\end{expl}

\begin{lem}\label{lem:quotFirst}
  Let $T$ be a monad in $\caC$, $A$ a $T$-algebra, $G$ a well-pointed topological
  group, and $S\colon \caC\to\Top^G$ a right $T$-functor. Then
  $S\sslash G\colon \caC\to \Top$ is a right $T$-functor and
  \[\lvert \B_\bullet(S\sslash G,T,A)\rvert \cong \lvert
    \B_\bullet(S,T,A)\sslash G\rvert.\]
\end{lem}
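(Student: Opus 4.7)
The plan is to exhibit both sides as the two iterated geometric realisations of a single bisimplicial space, and then invoke the Fubini property. Concretely, I would set
\[X_{p,q}\coloneqq G^q\times ST^pU^TA,\]
with $p$-direction faces and degeneracies coming from those of $\B_\bullet(S,T,A)$ — built out of the right $T$-functor structure $\sigma\colon S\circ T\Rightarrow S$, the monad multiplication and unit of $T$, and the $T$-action on $A$ — and $q$-direction ones coming from $\B_\bullet(\mathbb{1},\bbG,ST^pU^TA)$, built out of the multiplication and unit of $G$ and the $G$-action on $ST^pU^TA$. The two commute: the $p$-direction maps are $G$-equivariant because $S$ takes values in $\Top^G$, so every component of $\sigma$ is $G$-equivariant; and the $q$-direction maps are natural in their $G$-space argument.

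Before unfolding, I would dispose of the auxiliary claim that $S\sslash G$ is itself a right $T$-functor. Since each component of $\sigma$ is $G$-equivariant, applying $|\B_\bullet(\mathbb{1},\bbG,-)|$ to $\sigma$ produces a natural transformation $(S\sslash G)\circ T\Rightarrow S\sslash G$; the unit and associativity axioms hold levelwise and therefore survive realisation.

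Now I would compute both iterated realisations of $X_{\bullet,\bullet}$. Realising the $q$-direction first gives at each fixed $p$ the space $|[q]\mapsto G^q\times ST^pU^TA|=S\sslash G(T^pU^TA)=\B_p(S\sslash G,T,A)$, and the outer realisation in $p$ is the left-hand side. Realising the $p$-direction first gives at each fixed $q$ the space $G^q\times|\B_\bullet(S,T,A)|$, so that $|X_{\bullet,q}|$ is the $q$-th level of the simplicial space $\B_\bullet(\mathbb{1},\bbG,|\B_\bullet(S,T,A)|)$; the outer realisation in $q$ is then $|\B_\bullet(S,T,A)|\sslash G$, which is the right-hand side. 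The canonical homeomorphism between the two iterated realisations of a bisimplicial space (for instance, via the diagonal) finishes the argument.

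The only point that requires care — and therefore the main obstacle — is the interchange of the finite product with geometric realisation used in the second computation, i.e.\ the identification $|[p]\mapsto G^q\times ST^pU^TA|\cong G^q\times|\B_\bullet(S,T,A)|$. This is where the convention of working in compactly generated topological spaces is essential, since $G^q\times(-)$ then preserves colimits; well-pointedness of $G$ further ensures that the bar construction $\B_\bullet(\mathbb{1},\bbG,-)$ is good, so that its realisation behaves homotopically well. Apart from this and the routine bookkeeping of face-degeneracy identities, no further input is needed.
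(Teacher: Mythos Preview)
Your argument is correct and is exactly the approach the paper takes: both sides are the two iterated realisations of the bisimplicial space $\B_\bullet(\B_\circ(\mathbb{1},\bbG,S),T,A)=\B_\circ(\mathbb{1},\bbG,\B_\bullet(S,T,A))$, and Fubini gives the homeomorphism. Your remark on well-pointedness is extraneous here, since the claim is a strict homeomorphism rather than a homotopy statement; only the compact-generation convention (so that $G^q\times(-)$ commutes with realisation) is actually used.
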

\begin{proof}
  This follows from the fact that the two spaces in question are the two
  possibilities of realising the bisimplicial space
  $\B_\bullet(\B_\circ(\bEn,\bbG,S),T,A)=\B_\circ(\bEn,\bbG,\B_\bullet(S,T,A))$.
\end{proof}

\begin{defi}\label{defi:opd}
  By an \emph{operad} $\caO$, we mean a symmetric, monochromatic operad in
  spaces, see \cite[Def.\,1.1]{May-1972}, and we additionally require that all
  operation spaces $\caO(r)$ are Hausdorff. The identity operation is denoted by
  $\mathbf{1}=\mathbf{1}_\caO$.  An \emph{equivalence} $\caP\to \caO$ is an
  operad map such that all $\caP(r)\to \caO(r)$ are weak equivalences of
  spaces.\looseness-1

  We say that an operad $\caO$ is \emph{$\frS$-free} if for each $r\ge 0$, the
  $\frS_r$-action on $\caO(r)$ is free, and we call $\caO$ \emph{well-pointed}
  if the inclusion $\{\mathbf{1}_\caO\}\hookrightarrow \caO(1)$ is a Hurewicz
  cofibration. Finally, we call $\caO$ \emph{proper} if it is $\frS$-free and
  well-pointed.
\end{defi}

\begin{expl}\label{ex:Ed}
  We consider the \emph{$d$-discs operad} $\dE d$ where $\dE d(r)$ is the space
  of embeddings $\ul{r}\times D^d\hookrightarrow D^d$ (with
  $\ul{r}\coloneqq \{1,\dotsc,r\}$), which are on each disc of the form
  $z\mapsto \dot z + \rho_i\cdot z$ for some fixed $\dot z\in D^d$ and
  $\rho>0$. We have an inclusion $\dE d\hookrightarrow \dE{d+1}$ by extending
  the above description along
  $D^d=D^d\times\{0\}\hookrightarrow D^{d+1}$, and
  we call the colimit $\dE\infty$.

  For each $0\le d\le\infty$, we have $E_d(0)=*$ and the operad $E_d$ is
  proper. Moreover, the operation spaces $E_\infty(r)$ are contractible for each
  $r\ge 0$.\looseness-1
\end{expl}

\begin{defi}
  An \emph{$\caO$-algebra} is a space $A$, together with maps
  $\caO(r)\times_{\frS_r} A^r\to A$ that are associative and unital
  \cite[Def.\,1.2]{May-1972}. A map of $\caO$-algebras is called
  \emph{equivalence} if it is a weak equivalence on underlying spaces.
\end{defi}

\begin{expl}
  For each operad $\caO$, the space $\caO(0)$ of arity-$0$ operations
  is itself an $\caO$-algebra; it is actually the \emph{initial} $\caO$-algebra.
\end{expl}

\begin{defi}\label{defi:freeAlg}
  We denote by $U^\caO$ the forgetful functor from $\caO$-algebras to spaces.
  Its left-adjoint $F^\caO$ is given by taking $X$ to
  $\coprod_r \caO(r)\times_{\frS_r}X^r$, the $\caO$-action induced by the
  composition inside $\caO$. We denote the monad of this adjunction by $\bbO$;
  and in general, the monad associated to an operad gets the same letter in
  blackboard bold. Note that algebras over the monad $\bbO$ are the same as
  algebras over the operad $\caO$.
\end{defi}

\begin{defi}
  If $\rho\colon\caP\to\caO$ is a map of operads, then the forgetful functor
  $\rho^*$ from $\caO$-algebras to $\caP$-algebras has a left-adjoint, given by
  taking the free $\caO$-algebra and quotienting out all relations from the
  existing $\caP$-action. We call this left-adjoint \emph{pushforward} and
  denote it by $\caO\otimes_\caP ({-})$. Note that
  $\caO\otimes_\caP F^\caP\cong F^\caO$.
\end{defi}

Under mild point-set topological assumptions, there is a homotopy-invariant
replacement for the $\caO\otimes_\caP-$ that has a convenient simplicial
description:

\begin{rem}\label{rem:freeRes}
  If $\caP$ and $\caO$ are $\frS$-free, then their categories of algebras carry
  a model structure \cite{Berger-Moerdijk-2003} and $\caO\otimes_\caP({-})$ is a
  left Quillen functor, whose left-derivation we denote by
  $\caO\otimes^\bbL_\caP ({-})$.  If $\caP$ is well-pointed, then we have the
  following explicit model: The augmented simplicial $\caP$-algebra
  $\B_\bullet(F^\caP,\bbP,A)\to A$ is proper and has an extra degeneracy, and
  hence is a $\bbP$-free simplicial resolution of $A$ in the sense of
  \cite[Def.\,8.18]{Galatius-Kupers-RW-2018}.
  Using $U^\caO(\caO\otimes_\caP F^\caP)=\bO$, a model for
  $\caO\otimes^\bbL_\caP A$ is given by%
  \[|U^\caO \B_\bullet(\caO\otimes_\caP F^\caP,\bbP,A)| =
    |\B_\bullet(\bbO,\bbP,A)|,\]%
  together with the $\bbO$-action
  $\bbO|\B_\bullet(\bbO,\bbP,A)|\cong |\B_\bullet(\bbO^2,\bbP,A)|\to
  |\B_\bullet(\bbO,\bbP,A)|$, using that $\bbO$ commutes with geometric
  realisations \cite[Lem.\,9.7]{May-1972}. This description is functorial in
  $A$, and will be our preferred model throughout the article.

  If $\caQ$ is a third proper operad, together with a map $\caQ\to \caP$, then
  we have a natural weak equivalence
  $\caO\otimes^\bbL_\caP (\caP\otimes^\bbL_\caQ ({-})) \to \caO\otimes_\caQ^\bbL
  ({-})$. An elementary simplicial argument for this fact has been spelled out
  in the proof of \cite[Lem.\,5.12]{Bianchi-Kranhold-Reinhold}.\looseness-1
\end{rem}

For the following lemma, recall that for an operad $\caP$, an operad
\emph{under} $\caP$ is an operad $\caO$ that comes with a preferred operad map
$\caP\to \caO$. A map $\caO\to \caO'$ between operads under $\caP$ is required
to make the obvious triangle commute.

\begin{lem}\label{lem:Einfty}
  Let $\caP$ be a proper operad.
  \begin{enumerate}
  \item If $\caO$ is $\frS$-free operad under $\caP$ and $A\to A'$ is a map of
    $\caP$-algebras, then the induced map
    $\caO\otimes^\bbL_\caP A\to \caO\otimes^\bbL_\caP A'$ is an equivalence of
    $\caO$-algebras.
  \item If $A$ is a $\caP$-algebra and $\rho\colon \caO\to \caO'$ is an equivalence of
    $\frS$-free operads under $\caP$, then
    $\caO\otimes^\bbL_\caP A\to \rho^*(\caO'\otimes^\bbL_\caP A)$ is an equivalence of
    $\caO$-algebras.
  \end{enumerate}
\end{lem}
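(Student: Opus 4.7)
The plan is to work with the explicit model $\caO\otimes^\bbL_\caP A\simeq |\B_\bullet(\bbO,\bbP,A)|$ from \cref{rem:freeRes}, verify both assertions on each simplicial degree, and then transport them to geometric realisations. The two standing hypotheses of the lemma (properness of $\caP$ and $\frS$-freeness of $\caO,\caO'$) are tailored precisely to make this levelwise-to-realisation argument go through.

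First I would check that $\B_\bullet(\bbO,\bbP,A)$ is a proper simplicial $\caO$-algebra. By \cref{rem:freeRes}, the simplicial $\caP$-algebra $\B_\bullet(F^\caP,\bbP,A)\to A$ is proper (degeneracies are Hurewicz cofibrations), which uses well-pointedness of $\caP$. Applying the left adjoint $\caO\otimes_\caP(-)$ levelwise preserves properness, and the underlying space of $\caO\otimes_\caP F^\caP Y$ is $\bbO Y$. Thus $\B_\bullet(\bbO,\bbP,A)$ is proper in $\Top$, and geometric realisation sends levelwise weak equivalences between proper simplicial spaces to weak equivalences.

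For (1) (reading $A\to A'$ as a weak equivalence, as the statement must intend), I would argue by induction on $p$ that $\bbP^p A\to \bbP^p A'$ is a weak equivalence of spaces. The inductive step uses that for an $\frS$-free operad $\caP$, the endofunctor $Y\mapsto \caP(r)\times_{\frS_r}Y^r$ preserves weak equivalences: freeness of the $\frS_r$-action makes the quotient agree with the homotopy orbit, and $\caP(r)\times Y^r\to \caP(r)\times Y'^r$ is a $\frS_r$-equivariant weak equivalence in the compactly generated setting. Applying the $\frS$-free monad $\bbO$ to these equivalences yields a levelwise weak equivalence $\B_\bullet(\bbO,\bbP,A)\to \B_\bullet(\bbO,\bbP,A')$ of proper simplicial $\caO$-algebras, which descends to an equivalence on realisations.

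For (2), I would use that an equivalence $\rho\colon \caO\to\caO'$ of $\frS$-free operads induces, for every space $Y$, a natural weak equivalence $\bbO Y\to \bbO' Y$: on each summand this is $\caO(r)\times_{\frS_r}Y^r\to \caO'(r)\times_{\frS_r}Y^r$, again an equivalence because both $\frS_r$-actions on the operation spaces are free and $\caO(r)\to \caO'(r)$ is a weak equivalence. Applying this with $Y=\bbP^p U^\caP A$ produces a levelwise weak equivalence $\B_\bullet(\bbO,\bbP,A)\to \rho^*\B_\bullet(\bbO',\bbP,A)$ of proper simplicial $\caO$-algebras, whose realisation is the desired comparison map.

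The main obstacle is the point-set bookkeeping in Step 1 and in the preservation-of-equivalence arguments, where one must ensure that quotients by symmetric group actions on free operads behave homotopically well without cofibrancy hypotheses on $A$; the standing assumptions (properness of $\caP$, $\frS$-freeness of $\caO,\caO'$, and the compactly generated framework of \cref{defi:opd}) are exactly what is needed to avoid imposing cofibrancy on $A$ and to make the argument uniform in both parts.
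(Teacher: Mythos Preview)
Your approach is essentially the same as the paper's: work with the bar-construction model, show that the monads $\bbO$, $\bbO'$, $\bbP$ preserve weak equivalences of spaces, conclude levelwise equivalence, and pass to realisations via properness. The one place where the paper is more explicit is the justification that $Y\mapsto \caO(r)\times_{\frS_r}Y^r$ preserves weak equivalences: rather than asserting that the strict quotient agrees with the homotopy orbit, the paper uses that $\caO(r)$ is Hausdorff (built into \cref{defi:opd}) together with $\frS$-freeness to get that $\caO(r)\to\caO(r)/\frS_r$ is a covering, yielding a natural fibre sequence $X^r\to \caO(r)\times_{\frS_r}X^r\to \caO(r)/\frS_r$ and hence the claim by the five lemma. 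Your ``quotient equals homotopy orbit'' shortcut is morally the same, but you should be aware that freeness alone does not suffice for that identification in general; the Hausdorff hypothesis is doing real work here.
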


By abuse of notation, we will occasionally skip the symbol $\rho^*$ when it is
clear from the context that only the underlying $\caO$-algebra structure is
taken into account.

\begin{proof}
  As each $\caO(r)$ is Hausdorff and $\frS_r$ acts freely on $\caO(r)$, the map
  $\caO(r)\to \caO(r)/\frS_r$ is a covering. We hence get, for each $X$, a fibre
  sequence $X^r\to \caO(r)\times_{\frS_r}X^r\to \caO(r)/\frS_r$ which is natural
  in $X$.  By the five lemma, this shows that the monad $\bbO$ preserves weak
  equivalences among arbitrary spaces. The same argument applies to $\bbP$, and
  therefore, the simplicial map $\B_\bullet(\bbO,\bbP,A\to A')$ is a levelwise
  equivalence.  Similarly, the map $\bbO X\to \bbO' X$ is a weak equivalence for
  each space $X$, see \cite[Lem.\,11]{Kupers-Miller} for details, whence also
  $\B_\bullet(\bbO\to\bbO',\bbP,A)$ is a levelwise equivalence.
  
  Finally, since $\caO$ and $\caO'$ are $\frS$-free and $\caP$
  is proper, all involved simplicial spaces are proper.  This shows that the
  maps induced on realisations are weak equivalences.%
\end{proof}

\begin{constr}\label{constr:Einf}
  Let $\caP$ be a proper operad. Then the operad $\caP\times E_\infty$ is again
  proper, the projection to the first factor
  $\pi_1\colon \caP\times E_\infty \to\caP$ is an equivalence of proper operads,
  and the projection to the second factor is an operad map
  $\pi_2\colon \caP\times E_\infty\to E_\infty$.

  Let $A$ be a $\caP$-algebra. If
  $\smash{\caP\stackrel{\nu_1}{\leftarrow}\caQ\stackrel{\nu_2}{\to} E_\infty}$
  is any other zig-zag of proper operads such that $\nu$ is an equivalence, then
  we have an equivalence
  $E_\infty\otimes^\bbL_{\caP\times E_\infty}\pi^*_1 A\simeq
  E_\infty\otimes^\bbL_\caQ \nu^*_1 A$ of $E_\infty$-algebras.
  We will denote this homotopy type simply by $E_\infty\otimes^\bbL_\caP A$,
  noting that in the case where $\caP$ already comes with a map $\rho$ to
  $E_\infty$, the case of $\nu_1=\on{id}_\caP$ and $\nu_2=\rho$
  shows that the two competing definitions agree up to equivalence.
\end{constr}

\begin{constr}\label{constr:Ainft}
  Let $\mathrm{Ass}$ denote the associative operad; its algebras are topological
  monoids. An \emph{\emph{$A_\infty$}-operad} is a proper operad $\caA$ with an
  equivalence $\caA\to \mathrm{Ass}$.
  If $A$ is an \emph{$A_\infty$-algebra}, i.e.\ an algebra over some
  $A_\infty$-operad $\caA$, then it admits a \emph{bar construction}
  $\B A\coloneqq \B(\mathrm{Ass}\otimes^\bbL_\caA A)$, and hence a
  \emph{group-completion} $\Omega \B A$.

  Via the map $\pi_2\colon\on{Ass}\times E_\infty\to E_\infty$, each
  $E_\infty$-algebra $A$ is an algebra over the $A_\infty$-operad
  $\on{Ass}\times E_\infty$. If $\rho\colon\caA\to E_\infty$ is another map from
  an $A_\infty$-operad to $E_\infty$, then the monoids
  $\mathrm{Ass}\otimes^\bbL_{\on{Ass}\times E_\infty} \pi_2^*A$ and
  $\mathrm{Ass}\otimes^\bbL_{\caA} \rho^*A$ are equivalent, and so are their bar
  costructions.\looseness-1
\end{constr}

\subsection{Tangential structures and framed little discs}

\begin{defi}
  For two smooth manifolds $M$ and $N$, possibly with boundary, we denote by
  $\Emb(M,N)$ the space of smooth embeddings $M\hookrightarrow N$. Note that so
  far, we do not impose any boundary condition on the embeddings.
\end{defi}

\begin{defi}\label{defi:tang}
  A \emph{tangential structure} is a fibration $\theta\colon L\to \B\on{O}(d)$
  such that the total space $L$ is connected.  If $V_d$ denotes the universal
  vector bundle over $\B\on{O}(d)$, then a \emph{$\theta$-framing} on a smooth
  $d$-dimensional manifold $W$ is a bundle map $\ell_W\colon TW\to\theta^*V_d$.
  The space $\Fr^\theta(W)$ of all $\theta$-framings on $W$ is by definition
  $\Bun(TW,\theta^*V_d)$.
\end{defi}

\begin{constr}\label{constr:embth}
  Given $\theta$-framed manifolds $(W,\ell_W)$ and $(W',\ell_{W'})$, the space
  of \emph{$\theta$-framed embeddings
    $(W',\ell_{W'})\hookrightarrow (W,\ell_W)$} should model the homotopy fibre
  of the map $\Emb(W',W)\to \Fr^\theta(W')$ with
  $\alpha\mapsto \ell_{W}\circ T\alpha$ at $\ell_{W'}$.  In
  \cite[Def.\,17]{Kupers-Miller}, the authors give a ‘Moore path’ description
  as\looseness-1
  \[\Emb^\theta(W',W)\!\coloneqq\!\left\{\hspace*{-5px}
         \begin{array}{l}
           (\alpha,t^\bullet,\gamma)\in \on{Emb}(W',W)\times [0,\infty)^{\pi_0(W')}
           \times \Fr^\theta(W')^{[0,\infty)}\\
           \text{$\gamma(0)=\ell_{W'}$ and $\gamma|_{[t^i,\infty)}\equiv \ell_{W}\circ T\alpha$
           on each $i\in\pi_0(W')$.}
         \end{array}\hspace*{-5px}
       \right\}.\] It admits a strict composition
     $\Emb^\theta(W',W)\times\Emb^\theta(W'',W')\to \Emb^\theta(W'',W)$ by
     setting
     $(\alpha,t,\gamma)\circ (\alpha',t',\gamma') =
     (\alpha\circ\alpha',t'+t,\bar\gamma)$ on each path-component, with
  \[\bar\gamma(s)\coloneqq\begin{cases}
      \gamma'(s) & \text{for $0\le s\le t'$,}\\
      \gamma(s-t')\circ T\alpha' & \text{for $t'\le s\le t'+t$,}\\
      \ell_{W}\circ T\alpha\circ T\alpha' & \text{for $t'+t\le s$,}
    \end{cases}\] %
  and since we treated different components of $W'$ separately, we can also take
  disjoint unions of $\theta$-framed embeddings, resulting in a family of maps
  \[\Emb^\theta(W'_1,W_1)\times \Emb^\theta(W'_2,W_2)\to \Emb^\theta(W_1'\sqcup W_2',W_1\sqcup W_2).\]
  This constitutes a topologically enriched symmetric monoidal category with
  objects being $\theta$-framed manifolds, and morphisms being $\theta$-framed
  embeddings  \cite[Def.\,20]{Kupers-Miller}.

  We have a maps $\Emb^\theta(W',W)\to \Emb(W',W)$ compatible with compositions
  and disjoint unions. In the case where $\theta$ is the identity on
  $\B\on{O}(d)$, these maps are equivalences.
\end{constr}

\begin{nota}
  We often suppress the length of the Moore path in the tuple and just write
  $(\alpha,\gamma)$, meaning that for each $i\in \pi_0(W')$, with
  $W'_i\subseteq W'$ being the corresponding path-component, we have a path
  $\gamma^i\colon [0,t^i]\to \Fr^\theta(W'_i)$.\looseness-1
\end{nota}

\begin{defi}
  We fix, once and for all, a $\theta$-framing $\ell_{\R^d}$ of $\R^d$ which
  under the tautological trivialisation $\R^d\times\R^d\cong T\R^d$ only depends
  on the fibre factor (this is the same as choosing a basepoint $b_0\in L$ and
  parametrising the fibre $\theta^*V_d|_{\smash{b_0}}$).

  Let $\ell_{0,1}$ be the restriction of $\ell_{\R^d}$ to $D^d$ (the index
  stands for ‘genus $0$ and one boundary component’). Then we define the
  \emph{$\theta$-framed $d$-discs operad} with
  $\dE[\theta]d(r)\coloneqq \Emb^\theta(\ul{r}\times D^d,D^d)$, with
  $\ul{r}=\{1,\dotsc,r\}$ as before, where the operadic composition is given by
  composition of disjoint unions of embeddings.%
\end{defi}

\begin{expl}\label{ex:unfr}
  We have an operad map $\imath\colon E_d\to \dE[\theta]d$ by endowing each
  $\alpha\colon D^d\hookrightarrow D^d$ with $\alpha(z)= \dot z+r\cdot z$ with the path
  $[0,\log(\frac1r)]\to \Fr^\theta(D^d)$ taking $s$ to
  $e^{-s}\cdot \ell_{0,1}$.  If $\theta$ is the universal bundle
  $\on{E}\on{O}(d)\to \B\on{O}(d)$, this map is an equivalence of operads,
  leading to the usual defect in nomenclature that the classical (‘unframed’)
  operad $E_d$ is equivalent to the $E_d$-operad for the tangential structure of
  framings.
\end{expl}

\begin{lem}\label{lem:unaries}
  The space $\dE[\theta]d(1)$ of unary operations is equivalent to $\Omega L$.
\end{lem}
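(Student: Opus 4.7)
The plan is to reduce the statement to a Puppe-sequence argument for the fibration $\theta$. By the Moore-path construction in \cref{constr:embth}, $\dE[\theta]d(1) = \Emb^\theta(D^d, D^d)$ is a model for the homotopy fibre at $\ell_{0,1}$ of the derivative map $D\colon \Emb(D^d, D^d) \to \Fr^\theta(D^d)$, $\alpha \mapsto \ell_{0,1}\circ T\alpha$, so the task is to compute this homotopy fibre.

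I would then identify the source and target of $D$. For the source, the derivative at the origin yields a weak equivalence $\Emb(D^d, D^d) \simeq \on{O}(d)$ (after translating $\alpha(0)$ to $0$, rescaling by $\alpha(tz)/t$ with $t\to 0$, and polar-decomposing the resulting linear contraction onto its orthogonal part). For the target, reformulating bundle maps as $\on{O}(d)$-equivariant maps of principal frame bundles and using that $D^d$ is contractible gives $\Fr^\theta(D^d) \simeq \theta^*\on{EO}(d) \coloneqq L \times_{\on{BO}(d)} \on{EO}(d)$; this is the total space of the principal $\on{O}(d)$-bundle on $L$ obtained by pulling back the universal one along $\theta$, and it sits in a fibration $\on{O}(d) \to \theta^*\on{EO}(d) \to L$.

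The main obstacle is to verify that under these identifications, the map $D$ corresponds to the inclusion of the fibre over the basepoint $b_0 \in L$ (the one determined by $\ell_{\R^d}$); concretely, $A\in \on{O}(d)$ should be sent to $e_0\cdot A$, where $e_0 \in (\theta^*\on{EO}(d))_{b_0}$ is the frame corresponding to $\ell_{0,1}$. This step amounts to tracking the $\on{O}(d)$-actions carefully through the chain of equivalences, but it is essentially forced by naturality. Once it is established, the conclusion is immediate: for any fibration $F\to E\to B$ the homotopy fibre of the fibre inclusion $F\hookrightarrow E$ is $\Omega B$ by one step of the Puppe sequence, so we obtain $\dE[\theta]d(1)\simeq \Omega L$.
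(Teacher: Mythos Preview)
Your proposal is correct and follows essentially the same route as the paper: both arguments identify $\dE[\theta]d(1)$ as the homotopy fibre of $\Emb(D^d,D^d)\to \Fr^\theta(D^d)$, replace the source by $\on{O}(d)$ and the target by (a model for) $\theta^*\on{EO}(d)$, and then invoke the Puppe sequence of the fibration over $L$. The paper packages the compatibility check you flag as ``the main obstacle'' into an explicit commuting zig-zag through $\on{O}(d)$ and $\Bun_0(TD^d,TD^d)$, but the content is the same.
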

\begin{proof}
  The map $\Bun(TD^d,\theta^*V_d)\to L$ that only remembers the value
  of $0\in D^d$ is a fibration, whose fibre is homeomorphic to
  $\Bun_0(TD^d,TD^d)$, the space of bundle maps fixing
  $0\in D^d$.  Moreover, we have a zig-zag of maps\looseness-1
  \[
    \begin{tikzcd}[column sep=3em]
      \Emb(D^d,D^d)\ar{d}[swap]{\alpha\mapsto \ell_{0,1}\circ T\alpha} & \on{O}(d)\ar{r}{\omega\mapsto T\omega|_{\smash{D^d}}}\ar[d]\ar{l}[swap]{\omega|_{\smash{D^d}}\mapsfrom \omega} & \Bun_{0}(TD^d,TD^d)\ar{d}{\ell_{0,1}\circ ({-})}\\
      \Bun(TD^d,\theta^*V_d)\ar[r,equal] & \Bun(TD^d,\theta^*V_d)\ar[r,equal] & \Bun(TD^d,\theta^*V_d),
    \end{tikzcd}
  \]
  where the horizontal maps are equivalences. This induces a zig-zag of
  equivalences among homotopy fibres. They, in turn, are
  $\Emb^\theta(D^d,D^d)=\dE[\theta]d(1)$ and
  $\Omega L$.
\end{proof}

\subsection{Factorisation homology and decorated moduli spaces}
\label{subsec:FH}

We start by repeating the model from \cite[Def.\,34+43]{Kupers-Miller}:

\begin{rmd}
  Let $(W,\ell_W)$ be a $\theta$-framed manifold and let $A$ be a
  $\dE[\theta]d$-algebra.  We define the \emph{factorisation homology}
  $\int^\theta_W A\coloneqq |\B_\bullet(\dbE[\theta]W,\dbE[\theta]d,A)|$, where
  $\dbE[\theta]W$ is the right $\dbE[\theta]d$-functor that takes a space $X$ to
  $\coprod_{r\ge 0}\Emb^\theta(\ul{r}\times D^d,W)\times_{\frS_r} X^r$.
\end{rmd}

Informally, $\int^\theta_W A$ is the space of configurations of discs inside
$W$, each disc carrying a label in $A$, and the $\dE[\theta]d$-action on $A$ is
balanced with precomposing embeddings of discs; we call such a datum a
‘decoration’ of $W$.  We want to consider the \emph{moduli space} of such
decorated manifolds.
As done in \cite{Galatius-RW-2017,Galatius-RW-2018} for moduli spaces of
manifolds without decorations, this can be implemented by extending
$\smash{\int_W^\theta A}$ so that the $\theta$-framing of $W$ is allowed to
vary, and then quotient out the action of the usual topological group
$\Diff_\partial(W)$.  We make precise what we mean by this:\looseness-1

\begin{constr}
  Let $(W,\ell_W)$ be a $\theta$-framed manifold, possibly with boundary.  We
  fix a collar of $\partial W$ and let
  $\Fr^\theta_\partial(W)\subseteq \Fr^\theta(W)$ be the space of
  $\theta$-framings $\ell$ whose restriction to that collar agrees with the
  restriction of $\ell_W$. Moreover, let
  $\Fr^\theta_\partial(W,\ell_W)\subseteq\Fr^\theta_\partial(W)$ be the subspace
  containing those path-components intersecting the $\Diff_\partial(W)$-orbit of
  $\ell_W$.

  If $(W',\ell_{W'})$ is another $\theta$-framed manifold, then we want the space
  $\ul\Emb^\theta(W',W)$ to model the homotopy fibre of\looseness-1
  \[\Emb(W',W)\times \Fr^\theta_\partial(W,\ell_W)\to
    \Fr^\theta(W'),\quad (\alpha,\ell)\mapsto \ell\circ T\alpha\]%
  at $\ell_{W'}$. Similar to \cref{constr:embth}, this is achieved by defining
  $\ul\Emb^\theta(W',W)$ as the subspace of
  $\Emb(W',W)\times [0,\infty)^{\pi_0(W')}\times
  \Fr^\theta(W')^{[0,\infty)}\times \Fr^\theta_\partial(W,\ell_W)$ containing
  all tuples $(\alpha,t,\gamma,\ell)$ such that $(\alpha,t,\gamma)$ is a
  $\theta$-framed embedding $(W',\ell_{W'})\hookrightarrow (W,\ell)$. We then
  have compositions
  $\ul\Emb^\theta(W',W)\times \Emb^\theta(W'',W')\to \ul\Emb^\theta(W'',W)$ that
  are given by
  \[(\alpha,t,\gamma,\ell)\circ (\alpha',t',\gamma')=((\alpha,t,\gamma)\circ
  (\alpha',t',\gamma'),\ell).\]
\end{constr}

\begin{constr}\label{constr:mod}
  The topological group $\Diff_\partial(W)$ of diffeomorphisms of $W$ that
  preserve the collar of $W$ acts on
  $\ul\Emb^\theta(W',W)$ via
  $\phi\cdot (\alpha,t,\gamma,\ell)=(\phi\circ\alpha,t,\gamma,\ell\circ
  T\phi^{-1})$ and we denote the homotopy quotient by
  \[\caM^\theta_\partial(W,\ell_W)^{(W'\kern-1px,\kern1px\ell_{W'}\kern-.5px)}\coloneqq
    \ul\Emb^\theta(W',W)\sslash \Diff_\partial(W).\]%
\end{constr}

\begin{expl}
  The embedding space $\ul\Emb^\theta(\emptyset,W)$ is the same as
  $\Fr^\theta_\partial(W,\ell_W)$, and so
  $\caM^\theta_\partial(W,\ell_W)^{\emptyset} =
  \Fr^\theta_\partial(W,\ell_W)\sslash \Diff_\partial(W)$, which agrees with the
  classical description of the moduli space of $(W,\ell_W)$ as in
  \cite{Galatius-RW-2018}; hence the notation.

  We point out that in general,
  $\caM^\theta_\partial(W,\ell_W)^{(W'\kern-1px,\kern1px\ell_{W'})}$ need not be
  path-connected, but in the special case of $(W',\ell_{W'})=\emptyset$, it is
  path-connnected by construction.
\end{expl}

\begin{constr}\label{constr:modDiff}
  Let $(W,\ell_W)$ be a $\theta$-framed manifold. We have a right
  $\dbE[\theta]d$-functor from spaces to $\Diff_\partial(W)$-spaces by\looseness-1
  \[\udbE[\theta]W(X)\coloneqq \coprod_{r\ge 0}\ul\Emb^\theta(\ul{r}\times D^d,W)\times_{\frS_r}X^r,\]
  where the transformation
  $\udbE[\theta]W\circ \dbE[\theta]d\to \udbE[\theta]W$ is
  induced by the above composition, using that precomposition is
  $\Diff_\partial(W)$-equivariant. When passing to homotopy quotients,
  we get a right $\dbE[\theta]d$-functor from spaces to spaces by
  $\mdE[\theta]W\coloneqq \udbE[\theta]W\sslash \Diff_\partial(W)$,
  and we define the \emph{moduli space of manifolds of type $W$ with decorations in $A$}
  as
  \[W^\theta[A]\coloneqq |\B_\bullet(\mdE[\theta]W,\dbE[\theta]d,A)|
    \cong |\B_\bullet(\udbE[\theta]W,\dbE[\theta]d,A)|\sslash
    \Diff_\partial(W).\]%
\end{constr}

\begin{expl}\label{ex:point}
  The one-point space $*$, together with its unique $\dE[\theta]d$-algebra
  structure, is the free $\dE[\theta]d$-algebra over the empty space
  $\emptyset$. We hence have an augmentation
  \[\B_\bullet(\mdE[\theta]W,\dbE[\theta]d,*)=
    \B_\bullet(\mdE[\theta]W,\dbE[\theta]d,\dbE[\theta]d(\emptyset))\to
    \mdE[\theta]W(\emptyset)=\caM^\theta_\partial(W,\ell_W),\]%
  induced by the transformation
  $\mdE[\theta]W\circ \dbE[\theta]d\to
  \mdE[\theta]W$, and this augmentation admits an extra degeneracy
  induced by the unit of the monad $\dbE[\theta]d$ in the last argument. This
  shows that after geometric realisation, we obtain an equivalence\looseness-1
  \[W^\theta[*]=|\B_\bullet(\mdE[\theta]W,\dbE[\theta]d,*)|\stackrel{\simeq}{\longrightarrow}
    \caM^\theta_\partial(W,\ell_W).\]%
\end{expl}

\begin{rmk}\label{rmk:hRet}
  The assignment $A\mapsto W^\theta[A]$ is functorial in $\dE[\theta]d$-algebras
  and $*$ is both initial and terminal in $\dE[\theta]d$-algebras. It follows
  that $W^\theta[*]$ is a retract of $W^\theta[A]$ for each
  $\dE[\theta]d$-algebra $A$. In combination with \cref{ex:point}, we can
  conclude that each $W^\theta[A]$ contains $\caM^\theta_\partial(W,\ell_W)$ as
  a homotopy retract.
\end{rmk}

\begin{lem}\label{lem:Actd}
  If $A$ is path-connected, then $W^\theta[A]$ is path-connected as well.
\end{lem}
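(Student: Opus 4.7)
By construction, $W^\theta[A]$ is the geometric realisation of the simplicial space $\B_\bullet(\mdE[\theta]W,\dbE[\theta]d,A)$, and so $\pi_0(W^\theta[A])$ is the coequaliser of the two face maps $\pi_0(d_0),\pi_0(d_1)\colon \pi_0(\B_1)\rightrightarrows \pi_0(\B_0)$. My plan is to show that every class in $\pi_0(\B_0)=\pi_0(\mdE[\theta]W(A))$ is identified under this relation with a class in the $r=0$ summand $\ul\Emb^\theta(\emptyset,W)\sslash\Diff_\partial(W)=\caM^\theta_\partial(W,\ell_W)$, and then to invoke that this summand is itself path-connected by the very definition of $\Fr^\theta_\partial(W,\ell_W)$ as the union of path-components meeting a single $\Diff_\partial(W)$-orbit.

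First, I would reduce to constant labels. Let $a_0\in A$ denote the image of the unique element of $\dE[\theta]d(0)$ under the algebra structure map. Given a 0-simplex $[\alpha;a_1,\dots,a_r]$ in the $r$-th summand of $\mdE[\theta]W(A)$, the path-connectedness of $A$ yields paths from each $a_i$ to $a_0$; their product descends to a path in $\mdE[\theta]W(A)$ from $[\alpha;a_1,\dots,a_r]$ to $[\alpha;a_0,\dots,a_0]$.

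Next, I would exhibit a 1-simplex linking this ``constant'' 0-simplex to one in the $r=0$ summand. Consider $[\alpha;*_1,\dots,*_r]\in \mdE[\theta]W(\dbE[\theta]d(A))$, where each $*_i$ denotes the unique element of $\dE[\theta]d(0)\subseteq \dbE[\theta]d(A)$. Unwinding the bar construction, the face $d_0$, which applies the $\dE[\theta]d$-action on $A$, sends this simplex to $[\alpha;a_0,\dots,a_0]$; the face $d_1$, which composes $\alpha$ with the $r$-tuple of empty embeddings via the right $\dbE[\theta]d$-functor structure of $\mdE[\theta]W$, collapses all discs and leaves only the ambient $\theta$-framing datum of $\alpha$, and hence lands in $\ul\Emb^\theta(\emptyset,W)\sslash\Diff_\partial(W)=\caM^\theta_\partial(W,\ell_W)$. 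Combining the three steps shows that every 0-simplex is identified in $\pi_0(W^\theta[A])$ with a class in the connected subspace $\caM^\theta_\partial(W,\ell_W)$, so $W^\theta[A]$ is path-connected. The only subtle point in this plan is the explicit identification of the two face maps on the chosen 1-simplex, which is a direct unwinding of the definitions of $\mdE[\theta]W$, $\dbE[\theta]d$, and the $\dE[\theta]d$-algebra structure on $A$.
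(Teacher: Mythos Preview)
Your argument is correct. The paper organises the proof a little differently: instead of working directly with the coequaliser description of $\pi_0$, it uses the retract $W^\theta[*]\hookrightarrow W^\theta[A]$ from \cref{rmk:hRet} and the already established equivalence $W^\theta[*]\simeq\caM^\theta_\partial(W,\ell_W)$ from \cref{ex:point}. It then only needs to check that the simplicial map $\B_\bullet(\mdE[\theta]W,\dbE[\theta]d,*)\to\B_\bullet(\mdE[\theta]W,\dbE[\theta]d,A)$ is $\pi_0$-surjective on $0$-simplices, which is immediate from path-connectedness of $A$. Your explicit $1$-simplex with nullary labels is effectively the extra degeneracy underlying \cref{ex:point}, so the two arguments are morally the same; the paper's version is a bit more modular because it reuses the retract and the equivalence, while yours is more self-contained and makes the identifying $1$-simplex visible.
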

\begin{proof}
  We show that the aforementioned retract
  $W^\theta[*]\hookrightarrow W^\theta[A]$ is $0$-connected (i.e.\ surjective on
  $\pi_0$); then the statement follows from the fact that
  $W^\theta[*]\simeq \caM^\theta_\partial(W,\ell_W)$ is path-connected. Since the
  map in question is the geometric realisation of the simplicial map
  $\imath_\bullet\colon
  \B_\bullet(\mdE[\theta]W,\dbE[\theta]d,*)\to\B_\bullet(\mdE[\theta]W,\dbE[\theta]d,A)$,
  it suffices to check that the map $\imath_0$ among $0$-simplices is
  $0$-connected. The map $\imath_0$, however, is explicitly given by the union
  \[\coprod_{r\ge 0}\on{id}_{\ul\Emb^\theta(\ul r\times D^d,W)}\times_{\frS_r} (*\to A)^r,\]%
  which clearly is $0$-connected if $A$ is path-connected.
\end{proof}

\begin{expl}
  In the case where $\theta\colon \B \SO(2)\to \B\on{O}(2)$ is orientations of
  surfaces, $A$ is an $\dE[\theta]2$-algebra, and $W$ is an oriented surface,
  $W^\theta[A]$ is a ‘disc model’ for the generalised configuration space that
  has been described in \cite[§\,4]{Bonatto}. 
\end{expl}

We will see further special cases in \cref{sec:smaller}, e.g.\ moduli spaces
$\caM^{\theta,r}_\partial(W,\ell_W)$ of manifolds with $r$ permutable punctures
studied in \cite{CFB-Tillmann-2001,Bonatto-2020}, see \cref{ex:punct}.

Next, we establish a fibre sequence showing that $W^\theta[A]$ relates
factorisation homology $\int^\theta_W A$ and the classical moduli space
$\caM^\theta_\partial(W,\ell_W)$. This generalises the well-known fibre sequence
$\coprod_r C_r(\mathring W)\to \coprod_r\caM^{\theta,r}_\partial(W,\ell_W)\to
\caM^\theta_\partial(W,\ell_W)$, where $C_r(\mathring W)$ is the space of
unordered configurations of $r$ particles inside $\mathring W$:

\begin{prop}\label{prop:hofib}
  For each $\dE[\theta]d$-algebra $A$, we have a homotopy fibre sequence with a
  section
  \[\begin{tikzcd}
    \int_W^\theta A\ar[r] & W^\theta[A]\ar[r] &\caM^\theta_\partial(W,\ell_W).
  \end{tikzcd}\]
\end{prop}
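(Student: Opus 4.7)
The plan is to realise the projection $W^\theta[A]\to \caM^\theta_\partial(W,\ell_W)$ as the outcome of applying the bar construction and a homotopy quotient to a single $\Diff_\partial(W)$-equivariant fibration whose fibre computes $\int_W^\theta A$. The starting point is the forgetful map
\[
\pi\colon \udbE[\theta]W(X)\longrightarrow \Fr^\theta_\partial(W,\ell_W),\qquad (\alpha,t,\gamma,\ell,x_1,\dotsc,x_r)\mapsto \ell,
\]
which is $\Diff_\partial(W)$-equivariant by the formula in \cref{constr:mod}. By inspection of \cref{constr:embth}, the Moore-path datum $(\alpha,t,\gamma)$ is precisely the standard model for the homotopy fibre of $\Emb(\ul r\times D^d,W)\to \Fr^\theta(\ul r\times D^d)$ at $\ell_W\circ T\alpha$, so $\pi$ is a fibration and its fibre over $\ell_W$ is homeomorphic to $\dbE[\theta]W(X)$. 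Moreover, the $r=0$ summand $\ul\Emb^\theta(\emptyset,W)\cong \Fr^\theta_\partial(W,\ell_W)$ furnishes a continuous $\Diff_\partial(W)$-equivariant section of $\pi$.

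Because $\pi$ is a natural transformation of right $\dbE[\theta]d$-functors into the constant functor with value $\Fr^\theta_\partial(W,\ell_W)$, applying $\B_\bullet(-,\dbE[\theta]d,A)$ produces a map of simplicial $\Diff_\partial(W)$-spaces into a constant simplicial space which is levelwise a fibration, with levelwise fibre sequence
\[
\B_\bullet(\dbE[\theta]W,\dbE[\theta]d,A)\longrightarrow \B_\bullet(\udbE[\theta]W,\dbE[\theta]d,A)\longrightarrow \underline{\Fr^\theta_\partial(W,\ell_W)},
\]
sectioned at every simplicial level by the $r=0$ inclusion. Since $\dE[\theta]d$ is proper in the sense of \cref{defi:opd} (cf.\ the argument in the proof of \cref{lem:Einfty}), the simplicial spaces involved are proper, and all the levelwise fibres over a fixed path-component of the base are homeomorphic, so the realisation of the levelwise fibre sequence is again a fibre sequence. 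In particular,
\[
|\B_\bullet(\udbE[\theta]W,\dbE[\theta]d,A)|\longrightarrow \Fr^\theta_\partial(W,\ell_W)
\]
is a quasi-fibration with fibre $\int_W^\theta A$ and a continuous section.

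It remains to pass to the $\Diff_\partial(W)$-homotopy quotient. By \cref{lem:quotFirst} this commutes with the bar-construction realisation, and the Borel construction of a $G$-equivariant quasi-fibration with fibre $F$ is itself a quasi-fibration with the same fibre $F$; the section descends equivariantly to $\caM^\theta_\partial(W,\ell_W)\to W^\theta[A]$. The desired fibre sequence with section follows, and compatibility with \cref{ex:point} shows that this section agrees with the one induced by the $\dE[\theta]d$-algebra map $*\to A$. The only delicate step is the passage to geometric realisation in the second paragraph, where one needs to check properness of the simplicial spaces in question and equifibrancy of the levelwise fibration — both consequences of the properness of the operad $\dE[\theta]d$.
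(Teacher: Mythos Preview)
Your argument is correct and follows essentially the same route as the paper's proof: both establish the levelwise fibration $\B_\bullet(\udbE[\theta]W,\dbE[\theta]d,A)\to \Fr^\theta_\partial(W,\ell_W)$ via the projection $(\alpha,t,\gamma,\ell)\mapsto \ell$, identify its simplicial fibre as $\B_\bullet(\dbE[\theta]W,\dbE[\theta]d,A)$, invoke properness to pass to realisations (the paper cites \cite[Lem.\,2.14]{Ebert-RW} here), and then descend to homotopy quotients by a diagram chase. The only cosmetic difference is that you exhibit the section directly via the $r=0$ summand, whereas the paper phrases it through the functoriality $W^\theta[*]\to W^\theta[A]$ combined with the equivalence of \cref{ex:point}; these give the same section.
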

\begin{proof}
  Consider the augmented proper simplicial space
  \[\B_\bullet(\udbE[\theta]W,\dbE[\theta]d,A)\to %
    \B_\bullet(\udbE[\theta]W,\dbE[\theta]d,*)\to %
    \dbE[\theta]W(\emptyset)=\Fr^\theta_\partial(W,\ell_W).\]%
  The augmentations
  $B_p(\udbE[\theta]W,\dbE[\theta]d,A)\to \Fr^\theta_\partial(W,\ell_W)$ are
  of the form $\udbE[\theta]W(Y)\to \Fr^\theta_\partial(W,\ell_W)$ taking a
  tuple
  $\smash{[\alpha,t,\gamma,\ell;y_1,\dotsc,y_r]\in \ul\Emb^\theta(\ul{r}\times
    D^d,W)\times_{\frS_r}Y^r}$ to $\ell$. These maps can
  easily be checked to be fibrations, in particular quasifibrations.
  The actual simplicial fibre of the augmentation is the proper simplicial space
  $\B_\bullet(\dbE[\theta]W,\dbE[\theta]d,A)$.
  By \cite[Lem.\,2.14]{Ebert-RW} (and the fact that all involved simplicial
  spaces are proper, enabling us to switch between think and thin realisations),
  it follows that the homotopy fibre of the realisation
  $|\B_\bullet(\udbE[\theta]W,\dbE[\theta]d,A)|\to
  \Fr^\theta_\partial(W,\ell_W)$ is equivalent to
  $|\B_\bullet(\dbE[\theta]W,\dbE[\theta]d,A)|=\int^\theta_A$, %
  and so we obtain a homotopy fibre sequence
  \[\textstyle \int^\theta_WA\to |\B_\bullet(\udbE[\theta]W,\dbE[\theta]d,A)|\to
    \Fr^\theta_\partial(W,\ell_W).\]%
  Now we observe that the second map in this sequence is
  $\Diff_\partial(W)$-equivariant, and a diagram chase shows that the induced
  map among homotopy quotients has the same homotopy fibre. This gives rise to
  the desried homotopy fibre sequence. Finally, the map
  $W^\theta[A]\to \caM^\theta_\partial(W,\ell_W)$ has a section as it is given
  by $W^\theta[A]\to W^\theta[*]$, followed by the equivalence
  $W^\theta[*]\to \caM^\theta_\partial(W,\ell_W)$ from \cref{ex:point}, and the
  first map as a section by the functoriality of $W^\theta[{-}]$, as mentioned
  before.
\end{proof}

\subsection{Generalised surfaces}
\label{subsec:GenSurf}

In order to be able to ‘stabilise’ the spaces $W^\theta[A]$, we restrict our
attention to a certain class of even-dimensional manifolds
$W_{g,1}= \#^g(S^n\times S^n)\setminus\mathring{D}^{2n}$, which we introduce in
this subsection. We will use a slightly different model for $W_{g,1}$ in order
to have more control over their $\theta$-framings, especially close to their
boundary.\looseness-1

\begin{defi}
  Let $n\ge 1$. Then we abbreviate
  $C\coloneqq D^{2n}\setminus \frac12 D^{2n}$. We consider the manifold
  $W_{0,1}\coloneqq D^{2n}$, together with the collar
  $\jmath_0\colon C\hookrightarrow D^{2n}$ and the $\theta$-framing
  $\ell_{0,1}$.

  Moreover, we consider the manifold
  $W_{1,1}\coloneqq (S^n\times S^n)\setminus \mathring{D}^{2n}$, where
  $\mathring{D}^{2n}$ is the interior of a disc inside $S^n\times S^n$, and fix,
  once and for all, a collar $\jmath_1\colon C\hookrightarrow W_{1,1}$.
\end{defi}

In order to prescribe a rather strict $\theta$-framing near the boundary of
$W_{1,1}$, we need the notion of spherical tangential structures from
\cite{Galatius-RW-2017}:\looseness-1

\begin{defi}
  Let $D^d\hookrightarrow S^d$ the inclusion of a hemisphere. We call a tangential
  structure $\theta\colon L\to \on{BO}(d)$ \emph{spherical} if each
  $\theta$-framing on $D^d$ can be extended on $S^d$.  As $L$ is assumed to be
  path-connected, this is equivalent to requiring that $S^d$ admits a
  $\theta$-structure.\looseness-1
\end{defi}

\begin{lem}\label{lem:collar}
  Let $\theta\colon L\to \on{BO}(2n)$ be spherical.  Then there is a
  $\theta$-framing $\ell_{1,1}$ on $W_{1,1}$ which is admissible in the sense of
  \cite[Def.\,1.3]{Galatius-RW-2018} and satisfies
  $\ell_{1,1}\circ T\jmath_1 = \ell_{0,1}|_{\smash{C}}$.
\end{lem}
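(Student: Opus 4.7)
The plan is to construct a $\theta$-framing on the closed manifold $\ol W_1 \coloneqq S^n \times S^n$ that restricts to $\ell_{0,1}$ along an embedded disc $D^{2n} \hookrightarrow \ol W_1$ whose complement is $W_{1,1}$, and then to define $\ell_{1,1}$ as the restriction to $W_{1,1}$. With this construction, admissibility in the sense of \cite[Def.\,1.3]{Galatius-RW-2018} is automatic: the framing already extends across the $2n$-disc glued back onto $\partial W_{1,1}$, namely to all of $\ol W_1$.

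For the existence of such an extended framing I would use obstruction theory for the lifting problem of the classifying map $\ol W_1 \to \B\on{O}(2n)$ along $\theta$, relative to the prescribed lift on the embedded $D^{2n}$. Writing $F$ for the homotopy fibre of $\theta$, the obstructions lie in $H^{k+1}(\ol W_1, D^{2n}; \pi_k F) \cong \tilde H^{k+1}(\ol W_1; \pi_k F)$ and are hence concentrated in degrees $n$ and $2n$. The degree-$n$ obstructions are controlled by the restrictions of $T\ol W_1$ to the $n$-spheres $S^n \times \{*\}$ and $\{*\} \times S^n$; each such restriction is $TS^n \oplus \epsilon^n$, which is trivial (using that $TS^n \oplus \epsilon^1$ is), and a trivial bundle on a sphere admits a $\theta$-framing extending any given one on a point, since $L$ is non-empty and path-connected. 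The single top-degree obstruction sits over the quotient $\ol W_1/(S^n \vee S^n) \simeq S^{2n}$, and vanishes precisely by the spherical hypothesis on $\theta$, which says any $\theta$-framing on $D^{2n}$ extends to $S^{2n}$.

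Second, I must upgrade the resulting framing so that $\ell_{1,1}\circ T\jmath_1 = \ell_{0,1}|_C$ holds strictly. After the first step, $\ell_{1,1}|_{\jmath_1(C)}$ and $\ell_{0,1}|_C$ are two $\theta$-framings of the same annular region whose classes agree at the boundary $\partial D^{2n}$; because $C$ deformation retracts onto $\partial D^{2n}$, they are connected by a homotopy of $\theta$-framings relative to $\partial D^{2n}$. A standard collar-trick converts this homotopy into a modification of $\ell_{1,1}$ supported in a tubular neighborhood of $\jmath_1(C)$, which enforces the strict equality while leaving the framing untouched in the interior of $W_{1,1}$; in particular, admissibility is preserved.

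The main obstacle is the obstruction-theoretic step, and specifically the vanishing of the top-dimensional obstruction. It is exactly here that the spherical hypothesis enters in an essential way: without it, there is no reason for a $\theta$-framing prescribed on a small disc to extend across the top cell of $\ol W_1$. The lower-dimensional obstructions, by contrast, vanish by the more elementary stable-parallelizability of $S^n$ together with the connectedness of $L$.
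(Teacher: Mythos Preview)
There is a genuine gap in how you handle admissibility. In \cite[Def.\,1.3]{Galatius-RW-2018}, a $\theta$-structure on $W_{1,1}$ is \emph{admissible} when there exist embeddings $e,\bar e\colon S^n\times D^n\hookrightarrow W_{1,1}$ whose cores meet transversally in a single point, such that each pulled-back $\theta$-structure extends over $D^{n+1}\times D^n$. This is \emph{not} the condition that the structure extends across the disc glued to $\partial W_{1,1}$, and I do not see how the latter implies the former: the $\theta$-structure restricted to a core $S^n\times\{*\}$ could represent a non-trivial class in $\pi_n(L)$ and hence fail to bound. Your obstruction-theoretic construction does contain the right ingredient, but you have to use it differently: since $TS^n\oplus\epsilon^n$ is trivial, you may choose the extension over the two $n$-cells so that the $\theta$-structure there factors through the basepoint $b_0\in L$; \emph{that} choice makes $e^*\ell_{1,1}$ and $\bar e^*\ell_{1,1}$ visibly bound and hence gives admissibility in the correct sense.

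The top-cell step is also shakier than you suggest. Collapsing $S^n\vee S^n$ translates the secondary obstruction into a lifting problem for a specific bundle over $S^{2n}$, but that bundle need not be $TS^{2n}$ (for even $n$ it has Euler number $\chi(S^n\times S^n)=4$ rather than $2$), so the spherical hypothesis does not apply on the nose. The paper avoids both issues at once and in the opposite logical order: it first produces an admissible framing on $W_{1,1}$ that agrees with the extension of $\ell_{0,1}$ to $S^{2n}$ on an embedded punctured sphere, and then invokes \cite[Lem.\,7.9]{Galatius-RW-2018}, which takes admissibility as \emph{input} and yields the extension to $S^n\times S^n$ as \emph{output}. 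Only afterwards is a disc removed to obtain the collar condition.
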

\begin{proof}
  As $\theta$ is spherical, we can extend $\smash{\ell_{0,1}}$ to a
  $\theta$-framing $\ell$ of $S^{2n}$. Now we fix an embedding of the open disc
  $S^{2n}\setminus \frac12D^{2n}$ into the interior of $W_{1,1}$, and by using that
  $\ell|_{\smash{\mathring C}}$ factors through the trivial $\R^{2n}$-bundle
  over a point, we find an admissible $\theta$-framing of $W_{1,1}$ that
  restricts to $\ell$ on $S^{2n}\setminus\frac12D^{2n}$. By
  \cite[Lem.\,7.9]{Galatius-RW-2018}, we can extend this $\theta$-framing
  further onto the closed manifold $S^n\times S^n$. Finally, we remove
  $S^{2n}\setminus D^{2n}$ from $S^n\times S^n$ to obtain $W_{1,1}$ with the
  desired $\theta$-framing.\looseness-1
\end{proof}

\begin{constr}\label{constr:Wg1}
  For $g\ge 2$, we fix a rectilinear embedding
  $\alpha_g\colon \ul{\smash{g}}\times D^{2n}\hookrightarrow D^{2n}$ that
  satisfies
  $\alpha_g(\ul{\smash g}\times \frac12D^{2n})\subseteq \frac12D^{2n}$.  If we
  abbreviate
  $D^{\smash{2n}}_g\coloneqq D^{2n}\setminus \alpha_g(\ul{\smash g}\times
  \frac12 D^{2n})$, then $\alpha_g$ (co-)restricts to a map
  $\ul{\smash g}\times C\hookrightarrow D^{\smash{2n}}_g$, and we define the
  smooth manifold\looseness-1
  \[W_{g,1} \coloneqq (\ul{\smash g}\times W_{1,1})
    \cup_{\smash{\ul{\smash g}\times C}} D^{\smash{2n}}_g.\]
  We define $\ell_{g,1}\colon TW_{g,1}\to \theta^*V_{2n}$ to be
  $r^{-1}_i\cdot \ell_{1,1}$ on each $\{i\}\times TW_{1,1}$, where $r_i>0$ is
  the radius of the $i$\textsuperscript{th} disc, and to be $\ell_{0,1}$ on
  $D^{\smash{2n}}_g$, noting that these maps agree on the intersection of their
  domains. Then
  $\smash{\jmath_g\colon C\hookrightarrow D^{2n}_g\hookrightarrow W_{g,1}}$
  satisfies $\smash{\ell_{g,1}\circ T\jmath_g=\ell_{0,1}|_C}$.
\end{constr}

We thus have defined, for any $g\ge 0$, a $\theta$-framed manifold
$(W_{g,1},\ell_{g,1})$ with a collar $\jmath_g\colon C\to W_{g,1}$ of
the boundary. Accordingly, we require each
$\phi\in\Diff_\partial(W_{g,1})$ to satisfy $\phi\circ\jmath_g=\jmath_g$, and each
$\theta$-framing $\ell\in\Fr^\theta_\partial(W_{g,1},\ell_{g,1})$ to satisfy
$\ell\circ T\jmath_g=\ell_{g,1}\circ T\jmath_g$.

\begin{rmk}
  By picking $\theta$-framed embeddings
  $(W_{g,1},\ell_{g,1})\hookrightarrow (W_{g+1,1},\ell_{g+1,1})$, we can
  construct stabilisation maps $W^\theta_{g,1}[A]\to W^\theta_{g+1,1}[A]$. As it
  will turn out, these stabilisation maps are actually part of an
  $A_\infty$-structure on the disjoint union
  $\smash{W_{*,1}^\theta[A]\coloneqq \coprod_{g\ge 0}W_{g,1}[A]}$.

  Instead of making that formal, we will establish in \cref{prop:A} a graded
  equivalence between $W_{*,1}^\theta[A]$ and a space that canonically carries
  an $A_\infty$-structure.
\end{rmk}

\label{sec:2}

\section{Factorisation homology and the generalised surface operad}
\label{sec:3}
We fix a dimension $d=2n$ and a spherical tangential structure
$\theta\colon L\to \on{BO}(2n)$.  The goal of this section is to prove the
following statement, where $\caW^\theta$ is the $\theta$-framed generalised
surface operad (also called ‘manifold operad’) from
\cite{BBPTY}:%

\begin{prop}\label{prop:A}
  Let $\theta\colon L\to \on{BO}(2n)$ be a spherical tangential structure. Then
  we have, for each $\dE[\theta]{2n}$-algebra $A$, a graded equivalence
  $\smash{\dW[\theta]{*,1}[A]\simeq \caW^\theta\otimes^\bbL_{\dF[\theta]{2n}}
    \kappa^*A}$.
\end{prop}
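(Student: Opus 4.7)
The strategy is to realise both sides as two-sided bar constructions and match them genus by genus, using the fact that the monad associated with $\caW^\theta$ decomposes as a disjoint union over $g$ of the right-module functors $\mdE[\theta]{W_{g,1}}$ that feature in the definition of $\dW[\theta]{g,1}[A]$.

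First, I would spell out the $\theta$-framed generalised surface operad from \cite{BBPTY} in our setting, so that in arity $r$
\[
  \caW^\theta(r) \cong \coprod_{g\ge 0}\ul\Emb^\theta(\ul{r}\times D^{2n},W_{g,1})\sslash\Diff_\partial(W_{g,1}),
\]
with operadic composition given by gluing a $\theta$-framed surface of genus $g'$ into a marked disc of a surface of genus $g$, using the collars $\jmath_g$ of \cref{constr:Wg1}, to produce a surface of genus $g+g'$. Since the $\frS_r$- and $\Diff_\partial(W_{g,1})$-actions are on independent factors and hence commute, the associated monad decomposes as
\[
  \bbW^\theta(X) \cong \coprod_{g\ge 0}\mdE[\theta]{W_{g,1}}(X),
\]
naturally in $X$. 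Moreover, the $\dbF[\theta]{2n}$-right-module structure on each $\mdE[\theta]{W_{g,1}}$ inherited from $\dF[\theta]{2n}\to\caW^\theta$ agrees with the one obtained by pulling back the $\dbE[\theta]{2n}$-module structure of \cref{constr:modDiff} along $\kappa$; this matching is essentially built into the construction of the intermediate operad $\dF[\theta]{2n}$.

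Second, by \cref{rem:freeRes}, the derived pushforward is computed by
\[
  \caW^\theta\otimes^\bbL_{\dF[\theta]{2n}}\kappa^*A \;\simeq\; \bigl|\B_\bullet(\bbW^\theta,\dbF[\theta]{2n},\kappa^*A)\bigr|.
\]
Coproducts in the leftmost slot of $\B_\bullet$ commute with the simplicial structure and with geometric realisation, so the above decomposition of $\bbW^\theta$ gives the canonical genus grading
\[
  \bigl|\B_\bullet(\bbW^\theta,\dbF[\theta]{2n},\kappa^*A)\bigr| \cong \coprod_{g\ge 0}\bigl|\B_\bullet(\mdE[\theta]{W_{g,1}},\dbF[\theta]{2n},\kappa^*A)\bigr|.
\]
To finish, I would compare each summand with $\dW[\theta]{g,1}[A]=|\B_\bullet(\mdE[\theta]{W_{g,1}},\dbE[\theta]{2n},A)|$. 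The operad map $\kappa$ induces a simplicial map between the two bar constructions, and an argument exactly parallel to \cref{lem:Einfty}(2) shows that it is a levelwise weak equivalence: since $\kappa$ is an equivalence of $\frS$-free operads, the induced map $(\dbF[\theta]{2n})^p\kappa^*A\to(\dbE[\theta]{2n})^pA$ is a weak equivalence for each $p\ge 0$, and $\mdE[\theta]{W_{g,1}}$ preserves such equivalences once we commute $\sslash\Diff_\partial(W_{g,1})$ to the outside via \cref{lem:quotFirst} and use that the $\frS_r$-action on embedding spaces is free. All involved simplicial spaces are proper, so passing to realisations is homotopy-invariant and gives the desired graded equivalence.

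The main obstacle I anticipate is the first step: reconciling the construction of $\caW^\theta$ in \cite{BBPTY} with our particular model for $W_{g,1}$ in \cref{constr:Wg1}, in particular with the fixed collars $\jmath_g$, the admissible $\theta$-framings $\ell_{g,1}$ from \cref{lem:collar}, and the compatibility between $\dbF[\theta]{2n}$-module structures needed to make the simplicial comparison map well-defined. Once this bookkeeping is in place, the remaining steps are routine applications of the simplicial calculus already developed in Section 2.
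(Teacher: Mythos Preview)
Your overall architecture is correct and matches the paper: split the bar-construction model of the pushforward by genus, and in each genus compare $|\B_\bullet(\,\cdot\,,\dbF[\theta]{2n},\kappa^*A)|$ with $|\B_\bullet(\,\cdot\,,\dbE[\theta]{2n},A)|$ via the equivalence $\kappa$. That last comparison is indeed routine and the paper dispatches it in one sentence.

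The gap is in your first step. You assert an identification
\[
  \caW^\theta(r)\;\cong\;\coprod_{g\ge 0}\ul\Emb^\theta(\ul{r}\times D^{2n},W_{g,1})\sslash\Diff_\partial(W_{g,1})
\]
and then claim that, under it, the $\dbF[\theta]{2n}$-module structure coming from the operad map $\dF[\theta]{2n}\to\caW^\theta$ agrees on the nose with the one pulled back from \cref{constr:modDiff} along $\kappa$. Neither of these is available, and this is the actual content of the proof rather than bookkeeping. The paper's model for $\caW^\theta$ (\cref{constr:tW}) records a submanifold $W\subset D^{2n,\infty}$, a $\theta$-framing $\elll_W$, and a \emph{rectilinear} $\beta\in E_{2n,\infty}(r)$ with a \emph{strict} equality $\elll_W\circ T\beta|_{\ul r\times D^{2n}}=\ul r\times\ell_{0,1}$; there are no Moore paths at the input discs. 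The right $\dbE[\theta]{2n}$-functor $\udbE[\theta]{W_{g,1}}$, by contrast, uses $\ul\Emb^\theta$, i.e.\ arbitrary embeddings and Moore paths of framings. There is no direct map of right $\dF[\theta]{2n}$-modules between these two descriptions.

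What the paper does instead is construct an explicit zig-zag of equivalences of right $\dF[\theta]{2n}$-modules
\[
  \mdE[\theta]{W_{g,1}}\longleftarrow \bV_g\sslash\Diff_\partial(W_{g,1})\longrightarrow \tilde\bW_g^\theta\sslash\Diff_\partial(W_{g,1})\longrightarrow \bW_g^\theta,
\]
where the intermediate $\caV_g(r)$ is a common refinement: a tuple in $\ul\Emb^\theta(\ul r\times D^{2n},W_{g,1})$ together with an embedding $\eta\colon W_{g,1}\hookrightarrow D^{2n,\infty}$ and a Moore path $\zeta$ in $\Fr^\theta_\partial(W_{g,1})$ starting at the given framing and ending at a framing satisfying the strict compatibility with the rectilinear $\beta$ determined by $\eta\circ\alpha$. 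Forgetting $(\eta,\zeta)$ gives the left map; evaluating $\zeta$ at its endpoint and remembering $(\eta,\zeta(u),\beta)$ gives the right map. Both are shown to be equivalences by fibre arguments parallel to \cref{lem:eqF2n}. This interpolation between the Moore-path model and the strict-framing model is precisely what you flagged as ``the main obstacle'' and then set aside; it is the substantive step.
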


Here $\dF[\theta]{2n}$ is an operad that comes with an operad map
$\dF[\theta]{2n}\to\caW^\theta$ and an equivalence
$\kappa\colon \dF[\theta]{2n}\to \dE[\theta]{2n}$ of operads, and the
pushforward $\caW^\theta\otimes^\bbL_{\dF[\theta]{2n}}({-})$ is graded
by genus. Each of these objects needs to be constructed first, and this is the
content of the next subsection.

\subsection{Geometric models for operads}

Informally, the generalised surface operad $\caW^\theta$ is associated to the
\acr{PROP} given by the subcategory of the $\theta$-framed bordism category,
containing as objects disjoint unions of $S^{2n-1}$ and as morphisms bordisms
diffeomorphic to unions of
$W_{g,r+1}=W_{1,1}\setminus (\ul{r}\times \mathring D^{2n})$, considered to have
$r$ incoming and one outgoing boundary components.  This description can be
found in an $\infty$-operadic setting in \cite[§\,6.2]{Krannich-Kupers}. In
particular, the operation space $\caW^\theta(r)$ is a model for the moduli space
$\caM^\theta_\partial(W_{g,r+1},\ell_{g,r+1})$.  When working with actual
topological operads, one uses a different model \cite{Tillmann-2000,BBPTY}, due
to the fact that the topological bordism category \cite{GMTW} is not strictly
monoidal.

We study yet a third model that uses spaces of submanifolds of $\R^\infty$, and I
thank the referee for showing a way to significantly simplify my original
technical construction. The main merit of this model is the fact that we can
make the aforementioned zig-zag
$\dE[\theta]{2n}\leftarrow \dF[\theta]{2n}\to\caW^\theta$ explicit. The
existence of such operad maps has been mentioned as a ‘folk theorem’ in
\cite[Rmk.\,6.15]{Horel}, but I am not aware of any reference for it.

\begin{constr}
  For each $d\ge 0$, we let $D^{d,\infty}\coloneqq D^d\times D^\infty$, and we
  identify $D^d$ with the subspace
  $D^d\times\{0\}^\infty\subset D^{d,\infty}$.

  As a slight variation of the operad $E_\infty$ from \cref{ex:Ed}, let
  $E_{d,\infty}(r)$ be the space of all embeddings
  $\beta\colon \ul{r}\times D^{d,\infty}\hookrightarrow D^{d,\infty}$ which are
  on each disc of the form $z\mapsto \dot z + \rho\cdot z$ for some
  $\dot z\in D^{d,\infty}$ and $\rho>0$, such that
  $\beta(\ul{r}\times \frac12D^d\times D^\infty)\subseteq \frac12D^d\times
  D^\infty$ holds. These spaces assemble into a proper operad $E_{d,\infty}$
  with contractible operation spaces.\footnote{The operad $E_{d,\infty}$ is
    actually equivalent to $E_\infty$, but we will not use this fact.}
\end{constr}

\begin{constr}
  Recall the annulus $C\coloneqq D^{2n}\setminus \frac12D^{2n}$, the manifolds
  $W_{g,1}$, and the collars $\jmath_g\colon C\hookrightarrow W_{g,1}$ from
  \cref{subsec:GenSurf}.  We define $\Emb_\partial(W_{g,1},D^{2n,\infty})$ as the space
  of smooth embeddings $\eta\colon W_{g,1}\hookrightarrow D^{2n,\infty}$ such
  that $\eta\circ \jmath_g$ agrees with the canonical embedding
  $C\hookrightarrow D^{2n,\infty}$ and the remainder
  $\eta(W_{g,1}\setminus \jmath_g(C))$ lies inside
  $\frac12 D^{2n}\times \mathring D^{2n,\infty}$.\looseness-1
\end{constr}

\begin{constr}\label{constr:tW}
  Let
  $\tilde\caW_g^\theta(r)\subset \Emb_\partial(W_{g,1},D^{2n,\infty})\times
  \Fr^\theta_\partial(W_{g,1},\ell_{g,1})\times E_{2n,\infty}(r)$ the subspace
  of triples $(\eta,\elll,\beta)$ such that
  $\eta(W_{g,1})\cap \beta(\ul{r}\times D^{2n,\infty})=\beta(\ul{r}\times
  D^{2n})$ holds and for the (co-)restriction
  $\beta|_{\ul{r}\times D^{2n}}\colon \ul{r}\times D^{2n}\to W$, we have a
  (strict) equality
  \[\elll\circ T\eta^{-1}\circ T\beta|_{\ul{r}\times D^{2n}} = \ul{r}\times
    \ell_{0,1}.\]%
  We have an action of $\Diff_\partial(W_{g,1})$ on $\tilde\caW_g^\theta(r)$ by
  precomposing $\eta$ with $\phi$ and $\elll$ with $T\phi$, and we call its
  (actual) quotient $\caW^\theta_g(r)$.  Then elements in $\caW^\theta_g(r)$ are
  given by triples $(W,\elll_W,\beta)$ where $W\subset D^{2n,\infty}$ is a
  smooth submanifold of type $W_{g,1}$ satisfying the above conditions near the
  boundary, $\elll_W$ is a $\theta$-framing of $W$ such that for some
  identification of $W$ with $W_{g,1}$, the pullback of $\elll_W$ lies in
  $\Fr^\theta_\partial(W_{g,1},\ell_{g,1})$, and $\beta\in E_{2n,\infty}(r)$
  satisfying
  $W\cap \beta(\ul{r}\times D^{2n,\infty})=\beta(\ul{r}\times D^{2n})$ and
  $\elll_W\circ T\beta|_{\ul{r}\times D^{2n}}=\ul{r}\times \ell_{0,1}$.
\end{constr}

\begin{constr}
  For any $r,r_1,\dotsc,r_r,g,g_1,\dotsc,g_r\ge 0$, we have a map
  \[\caW^\theta_g(r)\times \prod_{i=1}^r \caW^\theta_{g_i}(r_i)\to \caW^\theta_{g+\sum_i g_i}({\textstyle\sum_i r_i})\]
  as follows: Let $(W,\elll_W,\beta)\in \caW^\theta_g(r)$ and
  $(W_i,\elll_{W_i},\beta_i)\in \caW^\theta_{h_i}(r_i)$, then the result is
  given by
  $(\hat W,\elll_{\hat W},\beta\circ (\beta_1\sqcup\dotsb\sqcup \beta_r))$,
  where
  \[\hat W\coloneqq \bigl(W\setminus \beta(\ul{r}\times \tfrac12D^{2n})\bigr)\cup_{\beta(\ul{r}\times C)} \bigcup_{i=1}^r \beta(\{i\}\times W_i).\]%
  and $\elll_{\smash{\hat W}\vphantom{W}}$ is defined to be $\elll_W$ on
  $W\setminus\beta(\ul{r}\times \frac12D^{2n})$ and
  $\elll_{W_i}\circ T\beta^{-1}$ on each $\beta(\{i\}\times W_i)$, noting that
  both terms agree on the intersection of their domains. Using a diffeomorphism
  of $W$ that moves the discs $\beta(\ul{r}\times D^{2n})$ near the collar, we
  obtain an identification of $\hat W$ with $W_{g+h_1\dotsb+h_r,1}$ that pulls
  back $\elll_{\smash{\hat W}\vphantom{W}}$ to a $\theta$-framing in the $\Diff_\partial$-orbit of
  $\ell_{\smash{g+h_1+\dotsb+h_r,1}}$.

  This turns the collection of
  $\caW^\theta(r)\coloneqq \coprod_{g\ge 0}\caW^\theta_g(r)$ into a proper
  operad, called the \emph{$\theta$-framed generalised surface operad}, with
  identity
  $(D^{2n},\ell_{0,1},\on{id}_{\smash{D^{2n,\infty}}})\in \caW^\theta_0(1)$.%
\end{constr}

\begin{constr}\label{constr:f2n}
  Recall that a $\theta$-framed embedding
  $(\alpha,\gamma)\in \dE[\theta]{2n}(r)$ contains Moore paths
  $\gamma\in \Fr^\theta_\partial(\ul{r}\times D^{2n})^{[0,\infty)}$ which are,
  on each $\{i\}\times D^{2n}$ of a given length $t^i\ge 0$. Let
  $F^\theta_{2n}(r)\subset\dE[\theta]{2n}(r)\times
  \Emb_\partial(D^{2n},D^{2n,\infty})\times [0,\infty)\times
  \Fr^\theta_\partial(D^{2n})^{[0,\infty)}$ be the subspace containing all
  $(\alpha,\gamma,\eta,u,\zeta)$ that satisfy the following:
  \begin{itemize}
  \item There is a $\beta\in E_{2n,\infty}(r)$ that satisfies
    $\eta(D^{2n})\cap \beta(\ul{r}\times D^{2n,\infty})=\beta(\ul{r}\times
    D^{2n})$ and $\eta\circ\alpha=\beta|_{\ul{r}\times D^{2n}}$. Note
    that if such a $\beta$ exists, it is uniquely determined.
  \item $\zeta$ is constant at $[u,\infty)$, $\zeta(0)=\ell_{0,1}$ and
    $\zeta(s)\circ T\alpha=\gamma(t^i-s)$ for all $s\ge 0$ (where we put
    $\gamma(s)\coloneqq \gamma(0)$ for $s<0$) on $\{i\}\times D^{2n}$; in
    particular, $\zeta(u)\circ T\alpha=\ul{r}\times \ell_{0,1}$.
  \end{itemize}
  As done before, we skip the entry $u$ from the tuple and regard $\zeta$ as a path
  that is defined on $[0,u]$.  Then we have an operadic composition
  $F_{2n}^\theta(r)\times \prod_{i=1}^rF_{2n}^\theta(r_i)\to
  F_{2n}^\theta(\sum_i r_i)$ by taking $(\alpha,\gamma,\eta,\zeta)$ and
  $(\alpha_i,\gamma_i,\eta_i,\zeta_i)$ to
  $((\alpha,\gamma)\circ \bigsqcup_i (\alpha_i,\gamma_i),\hat\eta,\hat\zeta)$,
  where $\hat\eta$ is defined to be $\beta\circ\eta_i\circ\alpha^{-1}$ inside
  $\alpha(\{i\}\times D^{2n})$, and $\eta$ everywhere else, and where
  \[\hat\zeta(s)\coloneqq
    \begin{cases}
      \zeta_i(s-t^i)\circ (T\alpha)^{-1} & \text{inside $\alpha(\{i\}\times D^{2n})$ if $s\ge t^i$,}\\
      \zeta(s) &\text{else}.
    \end{cases}
  \]
  We have a map of proper operads $\dF[\theta]{2n}\to \dE[\theta]{2n}$ that
  remembers $(\alpha,\gamma)$ from each tuple.  Moreover, we have maps
  $\dF[\theta]{2n}\to \tilde\caW^\theta_0(r)$ by taking
  $(\alpha,\gamma,\eta,\zeta)$ to $(\eta,\zeta(u),\beta)$, where $\beta$ is
  uniquely determined as described above. Quotienting out by the action of
  $\Diff_\partial(W_{g,1})$, the above assigment constitutes a map of proper
  operads $\dF[\theta]{2n}\to\caW^\theta$.\looseness-1
\end{constr}

\begin{constr}
  Recall the operad map $\imath\colon E_{2n}\to \dE[\theta]{2n}$ from
  \cref{ex:unfr}.  For any $1\le k\le 2n$ we let $F_k(r)$ be the subspace of
  $E_k(r)\times \Fr^\theta_\partial(D^{2n})^{[0,\infty)}$ containing all
  $(\alpha,\zeta)$ such that if
  $\alpha'\colon \ul{r}\times D^{2n}\hookrightarrow D^{2n}$ denotes the
  canonical extension, then $(\imath(\alpha'),\eta_0,\zeta)$ lies in
  $\dF[\theta]{2n}(r)$, where $\eta_0\colon D^{2n}\hookrightarrow D^{2n,\infty}$
  is the standard inclusion. Then the above composition law for paths $\zeta$
  turns $F_k$ into a proper operad.

  We have operad inclusions $F_1\to F_2\to\dotsb\to F_{2n}$ and an operad map
  $F_{2n}\to\dF[\theta]{2n}$ by taking $(\alpha,\zeta)$ to
  $(\imath(\alpha'),\eta_0,\zeta)$.  Moreover, we have maps $F_k\to E_k$ by
  taking $(\alpha,\zeta)$ to $\alpha$.
\end{constr}

\begin{constr}\label{constr:stab}
  We fix, once and for all, a nullary operation $\mathbf{0}\in F_1(0)$, e.g.\
  the path in $\Fr^\theta_\partial(D^{2n})$ that is constantly
  $\ell_{0,1}$. Along the operad map $F_1\to \caW^\theta(r)$, this becomes a
  genus-$0$ nullary operation in $\caW^\theta(r)$ and we obtain \emph{capping
    maps} $\on{cap}\colon\caW^\theta_g(r)\to \caW^\theta_g(0)$ by precomposing
  each operation with $r$ copies of $\mathbf{0}$.

  Similarly, we by fixing a genus-$1$ unary operation $\sigma\in \caW^\theta_1(1)$,
  we obtain \emph{stabilisation maps}
  $\on{stab}\colon \caW^\theta_g(r)\to \caW^\theta_{g+1}(r)$ by postcomposing
  with $s$. Since stabilisation commutes with capping, we have a
  stable capping map
  $\smash{\on{hocolim}_{g\to\infty}(\on{cap}\colon \caW^\theta_g(r)\to
  \caW^\theta_g(0))}$.\looseness-1
\end{constr}

\subsection{Properties of the new operads}

The goal of this subsection is to compare the spaces $\caW_g^\theta(r)$ to
moduli spaces of $\theta$-framed manifolds with multiple boundary components,
and to show that the operad maps $\dF[\theta]{2n}\to \dE[\theta]{2n}$ and
$F_k\to E_k$ are equivalences.

\begin{defi}
  For $g,r\ge 0$, we fix an embedding
  $\alpha\colon \ul{r}\times D^{2n}\hookrightarrow \mathring W_{g,1}$ and
  consider the $\theta$-framed manifold $(W_{g,r+1},\ell_{g,r+1})$ arising from
  $(W_{g,1},\ell_{g,1})$ by removing $\alpha(\ul{r}\times \mathring D^{2n})$.

  We have inclusions
  $\Diff_\partial(W_{g,r+1})\hookrightarrow \Diff_\partial(W_{g,1})$ by
  extending diffeomorphisms trivially on
  $\alpha(\ul{r}\times \mathring D^{2n})$, and similarly
  $\Fr^\theta_\partial(W_{g,r+1})\hookrightarrow\Fr^\theta_\partial(W_{g,1})$;
  their images are given by the subspace of diffeomorphisms $\phi$ with
  $\phi\circ\alpha=\alpha$ and framings $\ell$ with
  $\ell\circ T\alpha=\ell_{g,1}\circ T\alpha$.
  We suggestively denote these maps by
  $\phi\mapsto \phi\cup_\partial (\ul{r}\times D^{2n})$ and
  $\ell\mapsto \ell\cup_\partial(\ul{r}\times D^{2n})$.\looseness-1
\end{defi}

\begin{lem}\label{lem:pcFr}
  Let $\theta\colon L\to \on{BO}(2n)$ be $\pi_1$-injective and let
  $\ell\in \Fr^\theta_\partial(W_{g,r+1})$.  Then  $\ell$ lies in
  $\Fr^\theta_\partial(W_{g,r+1},\ell_{g,r+1})$ if and only if
  $\ell\cup_\partial(\ul{r}\times D^{2n})$ lies in
  $\Fr^\theta_\partial(W_{g,1},\ell_{g,1})$.
\end{lem}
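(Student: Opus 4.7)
The implication $(\Rightarrow)$ is immediate and does not use $\pi_1$-injectivity: given $\phi\in\Diff_\partial(W_{g,r+1})$ and a path from $\ell$ to $\phi^*\ell_{g,r+1}$ in $\Fr^\theta_\partial(W_{g,r+1})$, applying $({-})\cup_\partial(\ul{r}\times D^{2n})$ pointwise yields a path from $\iota(\ell)$ to $(\phi\cup_\partial)^*\ell_{g,1}$ in $\Fr^\theta_\partial(W_{g,1})$, using that $\ell_{g,r+1}\cup_\partial(\ul{r}\times D^{2n})=\ell_{g,1}$ by \cref{constr:Wg1}.

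For the converse, suppose $\iota(\ell)$ is path-connected to $\psi^*\ell_{g,1}$ for some $\psi\in\Diff_\partial(W_{g,1})$. The first step is to use isotopy extension to absorb $\psi$ into the image of $\Diff_\partial(W_{g,r+1})$. Since $\Diff_\partial(W_{g,1})$ consists of orientation-preserving diffeomorphisms, $\psi\circ\alpha$ lies in the same path-component of $\Emb(\ul{r}\times D^{2n},\mathring W_{g,1})$ as $\alpha$, so there is an isotopy $\phi_t\in\Diff_\partial(W_{g,1})$ from $\on{id}$ to $\phi_1$ with $\phi_1\circ\psi\circ\alpha=\alpha$. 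Setting $\tilde\psi\coloneqq\phi_1\circ\psi$, we obtain $\tilde\psi\circ\alpha=\alpha$, so $\tilde\psi$ arises as the extension of some $\tilde\phi\in\Diff_\partial(W_{g,r+1})$, and the path $t\mapsto(\phi_t\circ\psi)^*\ell_{g,1}$ connects $\psi^*\ell_{g,1}$ to $\iota(\tilde\phi^*\ell_{g,r+1})$ in $\Fr^\theta_\partial(W_{g,1})$. Concatenating with the given path yields $\iota(\ell)\sim\iota(\tilde\phi^*\ell_{g,r+1})$ in $\Fr^\theta_\partial(W_{g,1})$.

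Second, consider the Serre fibration obtained by restricting framings to the embedded discs,
\[\Fr^\theta_\partial(W_{g,r+1})\xrightarrow{\iota}\Fr^\theta_\partial(W_{g,1})\xrightarrow{\on{res}}\Fr^\theta(\ul{r}\times D^{2n}),\]
whose base is weakly equivalent to $L^r$. Since $\iota_*[\ell]=\iota_*[\tilde\phi^*\ell_{g,r+1}]$ in $\pi_0(\Fr^\theta_\partial(W_{g,1}))$, the exact sequence $\pi_1(L)^r\to\pi_0(\Fr^\theta_\partial(W_{g,r+1}))\to\pi_0(\Fr^\theta_\partial(W_{g,1}))$ provides an element $\gamma\in\pi_1(L)^r$ with $[\ell]=\gamma\cdot[\tilde\phi^*\ell_{g,r+1}]$ in $\pi_0(\Fr^\theta_\partial(W_{g,r+1}))$.

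Finally, the $\pi_1$-injectivity of $\theta$ is used to absorb $\gamma$ into the $\Diff_\partial(W_{g,r+1})$-orbit. The embedding $\pi_1(L)^r\hookrightarrow\pi_1(\on{BO}(2n))^r=(\Z/2)^r$ identifies each component of the obstruction with an ``orientation-twist'' of the framing near a corresponding boundary sphere $\alpha(\{i\}\times\partial D^{2n})$, and such a twist can be realized by an explicit diffeomorphism in $\Diff_\partial(W_{g,r+1})$ supported in a collar of that boundary sphere (a generator of $\pi_0\Diff_\partial(S^{2n-1}\times I)$, reflecting $\pi_1(\on{SO}(2n))$). Hence $\gamma\cdot[\tilde\phi^*\ell_{g,r+1}]=[\phi^*\ell_{g,r+1}]$ for some $\phi\in\Diff_\partial(W_{g,r+1})$, which yields $\ell\in\Fr^\theta_\partial(W_{g,r+1},\ell_{g,r+1})$. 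The main obstacle lies in this last step: making the identification of the $\pi_1(L)^r$-action on framings with a concrete diffeomorphism-induced twist precise is exactly where the $\pi_1$-injective hypothesis becomes essential.
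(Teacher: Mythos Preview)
Your overall strategy matches the paper's: the forward direction is immediate, and for the converse you first isotope the diffeomorphism to preserve the embedded discs, then extract an obstruction from the long exact sequence of a restriction fibration, and finally realise that obstruction by a diffeomorphism supported near the inner boundary spheres. That is exactly the paper's route.

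There is, however, a genuine error in your identification of the base. The restriction map $\Fr^\theta_\partial(W_{g,1})\to \Fr^\theta(\ul{r}\times D^{2n})$ has base $\Fr^\theta(D^{2n})^r$, and this space is \emph{not} equivalent to $L^r$: it is the total space of a principal $\on{O}(2n)$-bundle over $L$ (equivalently, the homotopy fibre of $\theta$), sitting in a fibre sequence $\on{O}(2n)\to\Fr^\theta(D^{2n})\to L\xrightarrow{\theta}\on{BO}(2n)$. Thus the connecting map in the long exact sequence lands you in $\pi_1\Fr^\theta(D^{2n})^r$, not $\pi_1(L)^r$, and your inequality ``$\pi_1(L)\hookrightarrow\Z/2$'' is not the correct consequence of $\pi_1$-injectivity here. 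What $\pi_1$-injectivity of $\theta$ actually buys is that $\pi_1\on{O}(2n)\to\pi_1\Fr^\theta(D^{2n})$ is \emph{surjective}, so the obstruction lifts to $\pi_1\on{O}(2n)^r$. The paper then lifts once more along the surjection $\pi_1\on{SO}(2)^r\twoheadrightarrow\pi_1\on{O}(2n)^r$ to represent each component by a rotation loop, and realises it by a Dehn twist near the corresponding boundary sphere. Note that for $2n=2$ one has $\pi_1\on{O}(2)=\Z$, so the obstruction genuinely need not be $2$-torsion and a single ``generator of $\pi_0\Diff_\partial(S^{2n-1}\times I)$'' does not suffice; you may need an iterated Dehn twist. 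Once you correct the base and argue via $\pi_1\on{O}(2n)$ rather than $\pi_1(L)$, your proof becomes essentially the paper's.
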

\begin{proof}
  If $\ell\in\Fr^\theta_\partial(W_{g,r+1},\ell_{g,r+1})$, then we find
  $\psi\in \Diff_\partial(W_{g,r+1})$ and a path
  $\gamma$ from $\ell$ to\linebreak $\ell_{g,r+1}\circ T\psi$, so
  $s\mapsto \gamma(s)\cup_\partial (\ul{r}\times D^{2n})$ is a path from
  $\ell\cup_\partial(\ul{r}\times D^{2n})$ to
  $\ell_{g,1}\circ T(\psi\cup_\partial\ul{r}\times D^{2n})$.\looseness-1

  For the converse, we can assume that $\alpha$ extends to an embedding
  $\tilde\alpha\colon\ul{r}\times 2D^n\hookrightarrow W_{g,1}$ and
  $\ell_{g,1}\circ T\tilde\alpha$ factors over the trivial $\R^{2n}$-bundle over
  a point on each $\tilde\alpha(\{i\}\times \mathring 2D^{2n})$.  Now we note
  that $\Fr^\theta_\partial(W_{g,r+1})\to \Fr^\theta_\partial(W_{g,1})$ is the
  (homotopy) fibre of $(-)\circ T\alpha$, which lands in
  $\Fr^\theta(\ul{r}\times D^{2n})=\Fr^\theta(D^{2n})^r$. As in the proof of
  \cref{lem:unaries}, this space fits into a fibre sequence
  $\on{O}(2n)^r\to \Fr^\theta(D^{2n})^r\to L^r$, which can be continued by
  $\theta^r\colon L^r\to \B\on{O}(2n)^r$. Since $\theta$ is assumed to be
  $\pi_1$-injective, the map $\pi_1\on{O}(2n)^r\to \pi_1\Fr^\theta(D^{2n})^r$ is
  surjective.

  Postcomposing with the connecting morphism
  $\pi_1\Fr^\theta(D^{2n})^r\to \pi_0\Fr^\theta_\partial(W_{g,r+1})$ of the
  first fibre sequence, we obtain a map
  $\Phi\colon \pi_1\on{O}(2n)^r\to \pi_0\Fr^\theta_\partial(W_{g,r+1})$ and by
  exactness, this shows that if a component of $\Fr^\theta_\partial(W_{g,r+1})$
  gets identified with the component of $\ell_{g,r+1}$ when applying
  $({-})\cup_\partial(\ul{r}\times D^{2n})$, then it lies in the image of
  $\Phi$. This map $\Phi$, in turn, sends each loop
  $\gamma^\bullet\colon [0,1]\to \on{O}(2n)^r$ to the component of
  $\ell^\gamma$, which is given as follows: Let
  $\smash{\hat\alpha\colon \ul{r}\times S^{2n-1}\times [0,1]\cong \ul{r}\times
    (2D^{2n}\setminus \mathring D^{2n})\hookrightarrow W_{g,r+1}}$ be the
  restriction of $\tilde\alpha$, then a trivialisation of
  $\tilde\alpha^*TW_{g,1}$ restricts to
  $\hat\alpha^*TW_{g,r+1}\cong(\ul{r}\times S^{2n-1}\times [0,1])\times\R^{2n}$.
  Under these identifications, $\ell^\gamma$ is given by perturbing
  $\ell_{g,r+1}$ by the vector bundle automorphism
  $(i,z,t,v)\mapsto (i,z,t,\gamma^i(t)\cdot v)$ inside
  $\hat\alpha(\ul{r}\times S^{2n-1}\times [0,1])$.
  Now we note that the morphism $\pi_1\on{SO}(2)^r\to \pi_1\on{O}(2n)^r$ is surjective, so we
  may assume that each $\gamma^i$ is a mere rotation. Then we
  find a diffeomorphism $\psi\in\Diff_\partial(W_{g,r+1})$, comprised of $r$
  Dehn twists near the $r$ boundary spheres, such that
  $\ell_{g,r+1}^\gamma$ lies in the same component as $\ell_{g,r+1}\circ T\psi$.\looseness-1

  Coming back to the statement, let $\phi\in\Diff_\partial(W_{g,1})$ such that
  $\ell\cup_\partial(\ul{r}\times D^{2n})$ lies in the same component as
  $\ell_{g,1}\circ T\phi$. Then $\phi$ can be isotoped to a diffeomorphism
  preserving $\alpha$, and since this new diffeomorphism still satisfies the
  above condition, we can assume that $\phi$ itself is of the form
  $\psi\cup_\partial(\ul{r}\times D^{2n})$ for some
  $\psi\in\Diff_\partial(W_{g,r+1})$. Then the framing
  $(\ell\circ T\psi^{-1})\cup_\partial (\ul{r}\times D^{2n})$ lies in
  the same component as $\ell_{g,1}$. By the previous discussion, we find a
  diffeomorphism $\psi'\in\Diff_\partial(W_{g,r+1})$ such that
  $\ell\circ T(\psi^{-1}\circ\psi')$ lies in
  the same component as $\ell_{g,r+1}$.\looseness-1
\end{proof}

\begin{lem}\label{lem:moduli}
  If $L$ is $\pi_1$-injective, then we have equivalences
  $\caW^\theta_g(r)\simeq \caM^\theta_\partial(W_{g,r+1},\ell_{g,r+1})$.
\end{lem}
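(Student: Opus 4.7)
The strategy is first to identify $\caW_g^\theta(r)$ with the moduli space of $\theta$-framed $W_{g,1}$'s equipped with $r$ disjoint interior discs carrying the standard framing, and then, by cutting along these discs, with the usual moduli space of $\theta$-framed $W_{g,r+1}$. The hypothesis that $\theta$ is $\pi_1$-injective enters only through \cref{lem:pcFr} in the final step.

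\textbf{Step 1 (intrinsic reduction).} Introduce the auxiliary space
\[Z_g(r)\coloneqq \{(\elll,\alpha)\in \Fr^\theta_\partial(W_{g,1},\ell_{g,1})\times\Emb(\ul{r}\times D^{2n},\mathring W_{g,1})\mid \elll\circ T\alpha=\ul{r}\times \ell_{0,1}\},\]
and consider the $\Diff_\partial(W_{g,1})$-equivariant forgetful map $\tilde\caW_g^\theta(r)\to Z_g(r)$ that sends $(\eta,\elll,\beta)$ to $(\elll,\eta^{-1}\circ \beta|_{\ul{r}\times D^{2n}})$. The fibre over a point $(\elll,\alpha)$ consists of embeddings $\eta\in \Emb_\partial(W_{g,1},D^{2n,\infty})$ which linearise $\alpha$, together with a rectilinear extension $\beta\in E_{2n,\infty}(r)$ into the $D^\infty$-direction; since $\Emb_\partial(W_{g,1},D^{2n,\infty})$ is contractible and the space of transverse rectilinear extensions into the infinite-dimensional $D^\infty$-directions is contractible, the fibres are contractible, so the map is a weak equivalence. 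The $\Diff_\partial(W_{g,1})$-action on $\tilde\caW_g^\theta(r)$ is moreover free (since it is already free on the factor $\Emb_\partial(W_{g,1},D^{2n,\infty})$), so the strict and homotopy quotients agree, giving $\caW_g^\theta(r)\simeq Z_g(r)\sslash \Diff_\partial(W_{g,1})$.

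\textbf{Step 2 (cutting and framing extension).} Project $Z_g(r)$ to its embedding factor. By parametrised isotopy extension and the path-connectedness of $\Emb(\ul{r}\times D^{2n},\mathring W_{g,1})$ (which holds for any connected $2n$-manifold with $2n\ge 2$), the $\Diff_\partial(W_{g,1})$-action on the base is transitive, and the stabiliser at the reference embedding $\alpha_0$ used in \cref{lem:pcFr} is canonically identified with $\Diff_\partial(W_{g,r+1})$ via extension by the identity on the discs. The strict fibre over $\alpha_0$ is the space of $\elll\in \Fr^\theta_\partial(W_{g,1},\ell_{g,1})$ restricting to the standard framing on $\alpha_0(\ul{r}\times D^{2n})$; by \cref{lem:pcFr}, this fibre is exactly the image of $\Fr^\theta_\partial(W_{g,r+1},\ell_{g,r+1})$ under extension-by-identity, which is a homeomorphism onto it. Taking homotopy $\Diff_\partial(W_{g,1})$-orbits therefore yields
\[Z_g(r)\sslash \Diff_\partial(W_{g,1})\simeq \Fr^\theta_\partial(W_{g,r+1},\ell_{g,r+1})\sslash \Diff_\partial(W_{g,r+1})=\caM^\theta_\partial(W_{g,r+1},\ell_{g,r+1}),\]
completing the proof.

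The main obstacle lies in Step 2, where we must correctly match path-components of framings on either side of the cut: a $\theta$-framing of $W_{g,r+1}$ in the admissible component $\Fr^\theta_\partial(W_{g,r+1},\ell_{g,r+1})$ must correspond exactly to an admissible framing of $W_{g,1}$ after filling in standardly-framed discs. This is precisely the content of \cref{lem:pcFr}, whose proof fully consumed the $\pi_1$-injectivity assumption.
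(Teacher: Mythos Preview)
Your overall strategy matches the paper's: both arguments reduce $\caW^\theta_g(r)$ to the moduli space $\caM^\theta_\partial(W_{g,r+1},\ell_{g,r+1})$ by first stripping away the extrinsic embedding data $(\eta,\beta)$ and then passing from the $(W_{g,1},\text{discs})$-description to $W_{g,r+1}$ via the identification of $\Diff_\partial(W_{g,r+1})$ as a stabiliser and \cref{lem:pcFr}. Step~2 is fine and is exactly what the paper does after its principal-bundle argument.

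The gap is in Step~1. Your description of the fibre of $\tilde\caW^\theta_g(r)\to Z_g(r)$ over $(\elll,\alpha)$ is imprecise: since elements of $E_{2n,\infty}(r)$ dilate by a \emph{single} scalar $\rho_i$ on each component, the restriction $\beta|_{\ul{r}\times D^{2n}}$ already determines $\beta$ completely --- there is no residual ``extension into the $D^\infty$-direction'' to choose. The fibre is therefore the space of $\eta\in\Emb_\partial(W_{g,1},D^{2n,\infty})$ such that $\eta\circ\alpha$ is rectilinear and the intersection condition holds. This is a \emph{proper subspace} of $\Emb_\partial(W_{g,1},D^{2n,\infty})$, so its contractibility does not follow from contractibility of the ambient embedding space as you assert. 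To see that this subspace is contractible, one in effect has to do what the paper does: project first to a contractible retract $E'\subset E_{2n,\infty}(r)$ (the choice of $\beta$), and then observe that for fixed $\beta$ the residual freedom is an embedding of $W_{g,r+1}$ into $D^{2n,\infty}$ with prescribed behaviour on all boundary components, which is contractible by Whitney. Once this is done, your argument and the paper's coincide.

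A minor point: as written, the map in Step~1 may fail to land in $Z_g(r)$ because $\eta^{-1}\circ\beta|_{\ul{r}\times D^{2n}}$ can meet the collar; the paper handles this by first retracting onto the subspace $\caW'$ where $\beta\in E'$, which forces the discs into the interior.
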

\begin{proof}
  Let $E'\subset E_{2n,\infty}(r)$ be the subspace of all $\beta$ with
  $\beta(\ul{r}\times 2D^{2n}\times D^\infty)\subseteq \frac12 D^{2n}\times
  D^\infty$, where we extend $\beta$ on $\ul{r}\times 2D^{2n}\times D^\infty$
  with the same term; and we let $\caW'\subset \caW^\theta_g(r)$ be the subspace
  of tuples $(W,\elll_W,\beta)$ with $\beta\in E'$. Then $\caW^\theta_g(r)$
  deformation retracts onto $\caW'$ by shrinking the radii of the given
  embeddings and rescaling $\elll_W$ close to the discs.  One can now easily
  construct, around any $\beta\in E'$, an open neighbourhood $U\subset E'$ and a
  map $\tau\colon U\to \Diff(D^{2n,\infty})$ such that, for each
  $\hat\beta\in U$, the diffeomorphism $\tau(\hat\beta)$ restricts to the
  identity on $C\times D^\infty$ and satisfies
  $\tau(\hat\beta)\circ\hat\beta=\beta$. These maps can then be employed to show
  that $\caW'\to E'$ is locally trivial and hence a fibration. As the target
  space $E'$ is still contractible, it hence suffices to study the actual fibre
  of a fixed element $\beta\in E'$.\looseness-1

  This fibre is given by the (strict) $\Diff_\partial(W_{g,1})$-quotient of the
  subspace $S$ of the product\linebreak
  $\Emb_\partial(W_{g,1},D^{2n,\infty})\times
  \Fr^\theta_\partial(W_{g,1},\ell_{g,1})$ of all $(\eta,\elll)$ with
  $\eta(W_{g,1})\cap \beta(\ul{r}\times D^{2n,\infty})=\beta(\ul{r}\times
  D^{2n})$ and
  $\elll\circ T\eta^{-1}\circ T\beta|_{\ul{r}\times D^{2n}}=\ul{r}\times
  \ell_{0,1}$.\looseness-1

  We fix an embedding
  $\alpha\colon \ul{r}\times D^{2n}\hookrightarrow \mathring W_{g,1}$. We note
  that the subgroup of $\Diff_\partial(W_{g,1})$ containing all $\phi$ that
  preserve $\alpha$ agrees with the diffeomorphism group
  $\Diff_\partial(W_{g,r+1})$, and it acts on the subspace $S'\subseteq S$ of
  all $(\eta,\elll)$ where the first of the above conditions is replaced by the
  stronger assumption $\eta\circ \alpha=\beta|_{\ul{r}\times D^{2n}}$; then the
  second assumption reads $\elll\circ T\alpha=\ul{r}\times \ell_{0,1}$.  We
  moreover note that the map $S'\times\Diff_\partial(W_{g,1})\to S$ taking
  $(\eta,\elll,\phi)$ to $(\eta\circ \phi,\elll\circ T\phi)$ is a
  $\Diff_\partial(W_{g,1})$-equivariant principal
  $\Diff_\partial(W_{g,r+1})$-bundle (where $\Diff_\partial(W_{g,r+1})$ acts by
  $\psi\cdot (\eta,\elll,\phi)=(\eta\circ\psi,\elll\circ T\psi,\psi^{-1}\circ
  \phi)$ and where $\Diff_\partial(W_{g,1})$ right acts on $\Diff_\partial(W_{g,1})$
  and $S$ by precomposition), so it induces a homeomorphism
  \[S'/\Diff_\partial(W_{g,r+1})=(S'\times_{\Diff_\partial(W_{g,r+1})}%
    \Diff_\partial(W_{g,1}))/\Diff_\partial(W_{g,1})\to
    S/\Diff_\partial(W_{g,1}).\]%
  Finally, we note that $S'$ is the product two spaces: the space of embeddings
  of $W_{g,r+1}$ into $D^{2n,\infty}$ with a fixed boundary behaviour, and the
  subspace of $\Fr^\theta_\partial(W_{g,r+1})$ of all $\theta$-framings $\ell$
  such that
  $\ell\cup_\partial(\ul{r}\times D^{2n})\in
  \Fr^\theta_\partial(W_{g,1},\ell_{g,1})$. The first factor is contractible by
  an application of Whitney’s embedding theorem and the second factor is
  identified with $\Fr^\theta_\partial(W_{g,r+1},\ell_{g,r+1})$ by
  \cref{lem:pcFr}.  As the action of $\Diff_\partial(W_{g,r+1})$ on the first
  factor is free and proper, we hence get the desired equivalence
  \begin{align*}
    S'/\Diff_\partial(W_{g,r+1}) &\simeq S'\sslash \Diff_\partial(W_{g,r+1})
    \\ &\simeq \Fr^\theta_\partial(W_{g,r+1},\ell_{g,r+1})\sslash \Diff_\partial(W_{g,r+1})
    \\ &=\caM^\theta_\partial(W_{g,r+1},\ell_{g,r+1}).\qedhere
  \end{align*}
\end{proof}

\begin{cor}\label{cor:Wgood}
  Let $\theta\colon L\to \B\on{O}(2n)$ be a spherical tangential
  structure with $\pi_1$-injective $\theta$.
  \begin{enumerate}
  \item If $2n\ge 6$ (or $2n=2$ and $\theta$ is admissible to
    \cite[Thm.\,7.1]{RW-2016}), then the stabilisation maps
    $\smash{\on{stab}\colon\caW^\theta_g(r)\to \caW^\theta_{g+1}(r)}$ induce
    isomorphisms in $H_i(-;\Z)$ for $\smash{i\le \frac12\kern1px g-\frac32}$
    (for $2n=2$, the slope has to be replaced by the one from
    \cite[Thm.\,7.1]{RW-2016}) for any $r$.
  \item If $L$ is $n$-connected, then the stable capping map
    $\on{hocolim}_{g\to\infty}(\on{cap}\colon\caW^\theta_g(r)\to
    \caW^\theta_g(0))$ is a homology equivalence for each $r\ge 0$.
  \end{enumerate}
\end{cor}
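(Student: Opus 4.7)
The plan is to reduce both statements to known results on moduli spaces of high-dimensional manifolds, using \cref{lem:moduli} as a bridge. Under the $\pi_1$-injectivity hypothesis (present in both parts: in (i) by assumption, in (ii) because $n$-connectedness of $L$ forces $\pi_1 L=0$), that lemma identifies $\caW^\theta_g(r)$ with the classical moduli space $\caM^\theta_\partial(W_{g,r+1},\ell_{g,r+1})$. I would attempt to transport the question along these equivalences and then invoke the corresponding statements from \cite{Galatius-RW-2018,RW-2016}.

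For part (i), I would first verify that the operadic stabilisation map $\on{stab}\colon \caW^\theta_g(r)\to \caW^\theta_{g+1}(r)$, defined by postcomposition with a fixed $\sigma\in\caW^\theta_1(1)$, corresponds under \cref{lem:moduli} to the standard boundary-connected-sum with $(W_{1,1},\ell_{1,1})$ used in \cite{Galatius-RW-2018,RW-2016}. Unravelling definitions, a genus-$1$ unary operation is nothing but a $\theta$-framed embedded copy of $W_{1,1}$ inside $D^{2n,\infty}$ with prescribed behaviour at its two boundary spheres, and operadic postcomposition with $\sigma$ glues such a copy onto the outgoing boundary sphere of an existing element of $\caW^\theta_g(r)$; this is geometrically the same as a boundary-connected sum. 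Granting this identification, the claimed range is immediate from \cite[Thm.\,1.2]{Galatius-RW-2018} for $2n\ge 6$ and from \cite[Thm.\,7.1]{RW-2016} for $2n=2$, uniformly in $r\ge 0$.

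For part (ii), I would use that under $n$-connectedness of $L$, the theorem of Galatius and Randal-Williams \cite{Galatius-RW-2018} identifies the stable limit $\on{hocolim}_{g}\caM^\theta_\partial(W_{g,k},\ell_{g,k})$ with $\Omega^\infty_0\MT\theta$ for every $k\ge 1$, since the hypothesis on $L$ guarantees that the additional boundary spheres satisfy the required connectivity condition. The capping map corresponds, under these identifications, to a self-map of $\Omega^\infty_0\MT\theta$ induced by gluing in standard $\theta$-framed discs, which one checks to coincide with the identity by the naturality of the parametrised Pontryagin-Thom construction with respect to filling in discs of the standard $\theta$-framing. The main technical hurdle I expect is the careful bookkeeping needed to match our operadically defined stabilisation and capping maps with the geometric maps in the literature, given the slightly delicate models using embedded submanifolds of $D^{2n,\infty}$ with prescribed behaviour near the boundary; once those naturality statements are in place, both (i) and (ii) follow.
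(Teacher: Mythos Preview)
Your argument for part~(i) is essentially the paper's: identify $\caW^\theta_g(r)$ with $\caM^\theta_\partial(W_{g,r+1},\ell_{g,r+1})$ via \cref{lem:moduli}, check that the operadic stabilisation matches the geometric one, and invoke the known stability theorems. The paper additionally records that sphericality of $\theta$ and admissibility of $\ell_{1,1}$ (built into \cref{lem:collar}) are what yield the stated slope from \cite[Thm.\,1.4]{Galatius-RW-2018}; you should mention this.

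For part~(ii) your route diverges from the paper's and carries a risk of circularity. The paper does not pass through $\Omega^\infty_0\MT\theta$ at all; it invokes \cite[Thm.\,1.3]{Galatius-RW-2017} directly, which is precisely the assertion that gluing in a nullbordism of part of the boundary induces a homology equivalence on the stable moduli spaces $\caN^\theta_n(P)$. The only thing to verify is that $\caM^\theta_\partial(W_{g,r+1},\ell_{g,r+1})$ sits as a path-component of $\caN^\theta_n(\ul{r+1}\times S^{2n-1},\dotsc)$, and this is exactly where $n$-connectedness of $L$ enters: the defining condition on $\caN^\theta_n$ is that the underlying map $W_{g,r+1}\to L$ be $n$-connected, and since $W_{g,r+1}$ is $(n-1)$-connected this is automatic when $L$ is $n$-connected. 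Your proposed detour---identify both stable limits separately with $\Omega^\infty_0\MT\theta$ and then argue the capping map becomes the identity---is how that theorem is ultimately proved in \cite{Galatius-RW-2017}, but citing the finished statement is both cleaner and avoids the danger that the identification of $\on{hocolim}_g\caM^\theta_\partial(W_{g,r+1})$ with $\Omega^\infty_0\MT\theta$ for $r>0$ already presupposes the capping equivalence. Also, the relevant reference throughout part~(ii) is \cite{Galatius-RW-2017}, not \cite{Galatius-RW-2018}.
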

\begin{proof}
  We note that under the equivalence of \cref{lem:moduli}, the stabilisation and
  capping maps are identified up to homotopy with the usual maps appearing in
  the classical stability theorems. Then statement 1 is Harer’s stability
  theorem \cite{Harer-1984} in the version of \cite[Thm.\,7.1]{RW-2016} for
  $2n=2$ and \cite[Thm.\,1.4]{Galatius-RW-2018} for $2n\ge 6$, using that
  $\theta$ is spherical to get the improved slope and that
  $\ell_{1,1}$ is admissible in the sense of \cite[Def.\,1.3]{Galatius-RW-2018}.

  Statement 2 is implied by \cite[Thm.\,1.3]{Galatius-RW-2017}, noting that
  $\caM^\theta_\partial(W_{g,r+1},\ell_{g,r+1})$ is a single path component of
  what they would call $\caN^\theta_n(\ul{\smash{r+1}}\times S^{2n-1},\ul{\smash{r+1}}
  \times\ell_{0,1}|_{S^{2n-1}})$. Here
  $n$-connectivity of $L$ is needed as the bundle maps
  $\ell\colon TW_{g,r+1}\to \theta^*V_{2n}$ constituting
  $\caN^\theta_n(W_{g,r+1},\ell_{g,r+1})$ are required to cover an $n$-connected
  map $W_{g,r+1}\to L$ among base spaces. Since $W_{g,r+1}$ is
  $(n-1)$-connected, this condition is automatically satisfied if $L$ is
  $n$-connected, see also \cite[§\,7.2]{BBPTY} for a similar argument.
\end{proof}

\begin{lem}\label{lem:eqF2n}
  The operad maps $\dF[\theta]{2n}\to \dE[\theta]{2n}$ and $F_k\to E_k$ are
  equivalences of proper operads.
\end{lem}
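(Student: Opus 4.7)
The plan is to prove both statements by showing that the forgetful maps are weak equivalences in each arity $r$, which combined with the properness already established during their construction gives the desired equivalences of proper operads. In both cases the forgetful map extracts the underlying embedding data from a tuple that also records a Moore path of $\theta$-framings on the ambient disc (and, for $\dF[\theta]{2n}\to\dE[\theta]{2n}$, an embedding $\eta$ of $D^{2n}$ into $D^{2n,\infty}$). After verifying that the forgetful maps are Serre fibrations by parametrising the constructions continuously in the target, it suffices to analyse their fibres.

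For $F_k\to E_k$ at arity $r$, fix $\alpha\in E_k(r)$. The fibre consists of paths $\zeta\in\Fr^\theta_\partial(D^{2n})^{[0,\infty)}$ satisfying the conditions from \cref{constr:f2n}: $\zeta(0)=\ell_{0,1}$, eventual constancy, and a rigidly prescribed behaviour on $\alpha'(\ul r\times D^{2n})$ coming from the canonical Moore path of $\imath(\alpha')$ from \cref{ex:unfr}. Globally, $\zeta$ is a choice of extension over $[0,\infty)\times D^{2n}$ from the cofibrant subcomplex $(\{0\}\times D^{2n})\cup([0,\infty)\times\alpha'(\ul r\times D^{2n}))$ onto which $[0,\infty)\times D^{2n}$ deformation retracts. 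The space of such extensions is therefore weakly contractible; the canonical lift constructed in \cref{ex:unfr} provides an explicit basepoint together with a straight-line contraction onto it.

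For $\dF[\theta]{2n}\to\dE[\theta]{2n}$ at arity $r$, I would factor the forgetful map through the intermediate space that remembers $(\alpha,\gamma,\eta)$ but forgets the path $\zeta$ (and its length $u$). The fibre of the first stage is again a contractible space of path extensions as above. The fibre of the second stage over $(\alpha,\gamma)$ is
\[
\caE(\alpha)\coloneqq\bigl\{\eta\in\Emb_\partial(D^{2n},D^{2n,\infty}):\eta\circ\alpha\text{ extends to some }\beta\in E_{2n,\infty}(r)\bigr\}.
\]
To analyse $\caE(\alpha)$, I would project further via $(\eta,\beta)\mapsto\beta$ onto the convex (hence contractible) subspace of $E_{2n,\infty}(r)$ consisting of rectilinear embeddings whose restriction to $\ul r\times D^{2n}$ factors through $\alpha$. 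The fibre over any such $\beta$ is a space of boundary-preserving smooth embeddings of $D^{2n}\setminus\alpha(\ul r\times\mathring D^{2n})$ into $D^{2n,\infty}\setminus\beta(\ul r\times\mathring D^{2n,\infty})$ with prescribed values on the outer collar and on the new inner boundary components; this space is contractible by a Whitney-style argument using the infinitely many extra dimensions of the ambient $D^{2n,\infty}$.

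The main obstacle will be establishing the required fibration properties and the Whitney-style contractibility of the embedding-extension fibre rigorously. Both reduce to parametrised isotopy-extension results in $D^{2n,\infty}$, which hold because the infinite codimension of $D^{2n}\subset D^{2n,\infty}$ leaves enough room to straighten any isotopy and to join any two extensions smoothly; once these technical points are in place, the contractibility of $\caE(\alpha)$ and hence the weak equivalence of the forgetful maps follows formally.
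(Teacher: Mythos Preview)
Your strategy matches the paper's: the forgetful map is a fibration, and the fibre over $(\alpha,\gamma)$ factors through a projection to the $\beta$-space with remaining fibre a product of a Whitney-contractible embedding space and a contractible path space. Two places where the paper sharpens your outline: it first passes to the deformation retract $E'\subset\dE[\theta]{2n}(r)$ of tuples with $\alpha(\ul r\times\tfrac12 D^{2n})\subseteq\tfrac12 D^{2n}$ to establish the fibration property, and instead of your (unjustified) convexity claim it shows $(\eta,\zeta)\mapsto\beta$ is locally trivial onto all of the contractible space $E_{2n,\infty}(r)$; the path-space factor $S$ is handled dually to your extension formulation, as the fibre of the fibration $(-)\circ T\alpha\colon P_{\ell_{0,1}}\Fr^\theta_\partial(D^{2n})\to P_{\ell_{0,1}\circ T\alpha}\Fr^\theta_\partial(\ul r\times D^{2n})$ between contractible based path spaces.
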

\begin{proof}
  We shall only prove that the first map is an equivalence, as the argument for
  $F_k\to E_k$ is nearly identical. If $E'\subset \dE[\theta]{2n}(r)$ contains all
  $(\alpha,\gamma)$ with
  $\alpha(\ul{r}\times \frac12D^{2n})\subseteq \frac12D^{2n}$, then $E'$ is a
  deformation retract of $\dE[\theta]{2n}(r)$ and the map
  $\dF[\theta]{2n}(r)\to \dE[\theta]{2n}(r)$ corestricts to a fibration over
  $E'$. For any $(\alpha,\gamma)\in E'$, the actual fibre
  $\dF[\theta]{2n}(r)_{\alpha,\gamma}$ is given by the space of tuples
  $(\alpha,\gamma,\eta,\zeta)\in \dF[\theta]{2n}$ satisfying the two conditions
  from \cref{constr:f2n}, and it is our aim to show that this space is
  contractible.

  To do so, we note that the map
  $\dF[\theta]{2n}(r)_{\alpha,\gamma}\to E_{2n,\infty}(r)$ assigning to any such
  tuple the (unique) $\beta$ with $\eta\circ\alpha=\beta|_{\ul{r}\times D^{2n}}$
  is again locally trivial, and therefore a fibration. Since $E_{2n,\infty}(r)$
  is contractible, it hence suffices to study its fibre. Similar as in the
  previous proof, we see that it is the product of two spaces:
  \begin{itemize}[itemsep=0em]
  \item The space of embeddings of $W_{g,r+1}$ into $D^{2n,\infty}$ with a fixed
    boundary behaviour. This factor is contractible by Whitney’s embedding
    theorem.
  \item The space $S$ of Moore paths
    $\zeta\colon [0,u]\to \Fr^\theta_\partial(D^{2n})$ for some $u\ge 0$,
    satisfying $\zeta(0)=\ell_{0,1}$ and $\zeta(s)\circ T\alpha=\gamma(t_i-s)$.
  \end{itemize}
  It hence suffices to show that the second factor $S$ is contractible. Being
  only interested in its homotopy type, we may replace Moore paths by paths of
  length $1$. If $P_xX$ denotes the space of paths starting at $x\in X$ for any
  space $X$, then we see that $S$ is the fibre of
  \[({-})\circ T\alpha\colon P_{\ell_{0,1}}\Fr^\theta_\partial(D^{2n})\to
    P_{\ell_{0,1}\circ T\alpha}\Fr^\theta_\partial(\ul{r}\times D^{2n}),\]%
  the fibre taken at the reversed paths $\gamma$. We conclude the argument by
  noting that both source and target of this map are contractible, and since
  $\alpha$ is a cofibration, the restriction $({-})\circ T\alpha$ is a
  fibration, so the fibre $S$ coincides with the contractible homotopy fibre.
\end{proof}

\begin{cor}$F_1$ is an $A_\infty$-operad and the operad map
  $F_1\to \dF[\theta]{2n}$ sends both path components of $F_1(2)$ to the same
  component. Moreover, $\dF[\theta]{2n}(0)\simeq \dE[\theta]{2n}(0)=*$ is
  contractible.
\end{cor}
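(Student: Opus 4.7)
Each of the three claims will follow rather quickly from \cref{lem:eqF2n} together with well-known facts about the little $k$-cubes operads, so the plan is essentially to reduce everything to properties of the $E_k$.

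For the first claim, I would invoke \cref{lem:eqF2n} in the case $k=1$ to conclude that the canonical map $F_1\to E_1$ is an equivalence of proper operads. Since $E_1$ is the classical little intervals operad, the usual map $E_1\to \mathrm{Ass}$ is known to be an equivalence, so $F_1\to \mathrm{Ass}$ is an equivalence as well. Combined with the properness of $F_1$ established in its construction, this says precisely that $F_1$ is an $A_\infty$-operad in the sense of \cref{constr:Ainft}.

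For the second claim, the idea is to factor the operad map $F_1\to \dF[\theta]{2n}$ through $F_2$ (which makes sense since $2n\ge 2$) via the chain $F_1\hookrightarrow F_2\to F_{2n}\to \dF[\theta]{2n}$. Now apply \cref{lem:eqF2n} for $k=2$ to obtain the equivalence $F_2(2)\simeq E_2(2)$. Since $E_2(2)\simeq S^1$ is path-connected, $F_2(2)$ is path-connected, and therefore the images of the two path-components of $F_1(2)$ already coincide in $F_2(2)$, and \textit{a fortiori} in $\dF[\theta]{2n}(2)$.

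For the third claim, note that $\dE[\theta]{2n}(0)=\Emb^\theta(\emptyset, D^{2n})$ is a singleton (the unique ‘empty embedding’, with empty collection of Moore paths), and then \cref{lem:eqF2n} gives the weak equivalence $\dF[\theta]{2n}(0)\simeq \dE[\theta]{2n}(0)=*$. No real obstacle is anticipated; the only point to double-check is that $F_1$ is proper (i.e.\ $\frS$-free and well-pointed), which is immediate from its definition as a subspace of a product involving $E_1$, together with the evident choice of identity operation.
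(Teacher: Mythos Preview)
Your proof is correct and matches the paper's implicit reasoning: the corollary is stated without proof in the paper, precisely because it follows immediately from \cref{lem:eqF2n} in the way you describe (factoring through $F_2$ for the second claim is also exactly the argument the paper later spells out when verifying that $\caW^\theta$ is an \acr{OHS}).
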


Through the operad map $F_1\to \caW^\theta$, each $\caW^\theta$-algebra $A$ is
in particular an $A_\infty$-algebra. In the case of $A=\caW^\theta(0)$, we get
the following:

\begin{cor}\label{cor:MT}
  If $L$ is $n$-connected, then the $A_\infty$-algebra $\caW^\theta(0)$
  group-completes to the infinite loop space $\Z\times\Omega^\infty_0\MT\theta$, where
  $\MT\theta$ is the tangential Thom spectrum.
\end{cor}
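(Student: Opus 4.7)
The plan is to combine the geometric identification of $\caW^\theta(0)$ with the classical moduli spaces of $\theta$-framed surfaces, the homological stability statement already in hand, and the generalised Madsen--Weiss theorem. First, since $L$ is $n$-connected, $\theta$ is in particular $\pi_1$-injective, so \cref{lem:moduli} applies and yields, for each $g\ge 0$, an equivalence $\caW^\theta_g(0)\simeq \caM^\theta_\partial(W_{g,1},\ell_{g,1})$. On disjoint unions this exhibits the underlying space of the $A_\infty$-algebra $\caW^\theta(0)=\coprod_{g\ge 0}\caW^\theta_g(0)$ as $\coprod_{g\ge 0}\caM^\theta_\partial(W_{g,1},\ell_{g,1})$.

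Second, I would check that the $A_\infty$-structure on $\caW^\theta(0)$ coming from the operad map $F_1\to\caW^\theta$ corresponds under this identification to the usual boundary-connected-sum monoid structure. Indeed, via \cref{constr:f2n} a binary element of $F_1(2)$ embeds two collared discs into a bigger one, and inserting genus-$g_1$ and genus-$g_2$ representatives into these discs produces, up to isotopy inside $D^{2n,\infty}$, the standard $(W_{g_1+g_2,1},\ell_{g_1+g_2,1})$; the nullary operation $\mathbf{0}\in F_1(0)$ provides the unit. A similar check for the stabilisation element $\sigma\in\caW^\theta_1(1)$ of \cref{constr:stab} shows that postcomposition with $\sigma$ matches the usual genus-stabilisation. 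This is the step I expect to be the main technical obstacle, as it requires a careful bookkeeping of the gluing inside $D^{2n,\infty}$.

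Third, I would apply the group-completion theorem of \cite{McDuff-Segal} to the homotopy-commutative (indeed $E_\infty$-, since $n\ge 1$) monoid $\caW^\theta(0)$, using the homological stability of \cref{cor:Wgood}(1) — whose hypotheses hold because $\theta$ is spherical and $\pi_1$-injective, $\ell_{1,1}$ is admissible by \cref{lem:collar}, and the case $2n=2$ falls under \cite{RW-2016}. This yields
\[\Omega\B\caW^\theta(0)\simeq \Z\times \on{hocolim}_{g\to\infty}\caM^\theta_\partial(W_{g,1},\ell_{g,1}),\]
the $\Z$-factor recording genus.

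Finally, I would invoke the generalised Madsen--Weiss theorem of Galatius--Randal-Williams \cite{Galatius-RW-2014,Galatius-RW-2017}, which, under the $n$-connectivity of $L$ (and sphericality of $\theta$), identifies
\[\on{hocolim}_{g\to\infty}\caM^\theta_\partial(W_{g,1},\ell_{g,1})\simeq \Omega^\infty_0\MT\theta.\]
Chaining the three equivalences gives $\Omega\B\caW^\theta(0)\simeq \Z\times\Omega^\infty_0\MT\theta$, as desired. Apart from the geometric verification in step two, the argument is a direct assembly of already-cited theorems.
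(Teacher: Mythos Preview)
Your overall strategy matches the paper's: identify $\caW^\theta(0)$ with $\coprod_g\caM^\theta_\partial(W_{g,1},\ell_{g,1})$ via \cref{lem:moduli}, apply the group-completion theorem, and then invoke the Galatius--Randal-Williams result. There are, however, two points to correct. First, the group-completion theorem does \emph{not} require homological stability: since $\pi_0\caW^\theta(0)\cong\N$, McDuff--Segal directly gives $H_*(\Omega_0\B\caW^\theta(0))\cong\colim_g H_*(\caM^\theta_\partial(W_{g,1},\ell_{g,1}))$, i.e.\ $\Omega_0\B\caW^\theta(0)\simeq(\on{hocolim}_g\caM^\theta_\partial(W_{g,1},\ell_{g,1}))^+$. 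Your appeal to \cref{cor:Wgood}(1) is both unnecessary and harmful, as its hypotheses exclude $2n=4$, whereas the corollary is stated without such a restriction. Second, you have dropped the plus-construction in two places: the output of the group-completion theorem is $\Z\times(\on{hocolim})^+$, not $\Z\times\on{hocolim}$, and the Galatius--Randal-Williams theorem \cite[Thm.\,1.5]{Galatius-RW-2017} asserts that the Pontrjagin--Thom map $\on{hocolim}_g\caM^\theta_\partial(W_{g,1},\ell_{g,1})\to\Omega^\infty_0\MT\theta$ is \emph{acyclic}, not a weak equivalence; one then applies the plus-construction to turn it into one. With these two fixes your argument coincides with the paper's. (Your step two is fine to flag but the paper also treats it as implicit in the construction, as the operadic composition in $\caW^\theta$ is literally defined by gluing along collared discs.)
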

\begin{proof}
  Recall that \cref{lem:moduli} provides an equivalence
  $\caW^\theta(0)\simeq\coprod_{g\ge 0}\caM^\theta_\partial(W_{g,1},\ell_{g,1})$
  (this does not even use that $\theta$ is $\pi_1$-injective – which is
  satisfied anyway, as $L$ is $n$-connected).  By the group-completion theorem,
  it follows that $\Omega\B\caW^\theta(0)$ is equivalent to
  $\Z\times\on{hocolim}_{g\to\infty}\caM^\theta_\partial(W_{g,1},\ell_{g,1})^+$.
  Again, using that any map $W_{g,1}\to L$ is $n$-connected, $\caW^\theta(0)$ is
  a union of several path-components of
  $\caN^\theta_n(S^{2n-1},\ell_{0,1}|_{S^{2n-1}})$ from the setting of
  \cite{Galatius-RW-2017}, and the stabilisations from \cite{Galatius-RW-2017}
  restrict on path-components to stabilisations among
  $\caM^\theta_\partial(W_{g,1},\ell_{g,1})$. We therefore can apply
  \cite[Thm.\,1.5]{Galatius-RW-2017} and conclude that the Pontrjagin–Thom map
  $\on{hocolim}_{g\to\infty}\caM^\theta_\partial(W_{g,1},\ell_{g,1})\to
  \Omega_0^\infty\MT\theta$ is acyclic. By applying the Quillen
  plus-construction to the left side, we get an equivalence.\looseness-1
\end{proof}

In the case of $2n=2$ and
$\theta\colon \on{BSO}(2)\to\on{BO}(2)$, \cref{cor:MT} is an instance of the
Madsen–Weiss theorem \cite{Madsen-Weiss} telling us
$\Omega\B\coprod_{g\ge
  0}\B\Diff^+_\partial(S_{g,1})\simeq\Omega^\infty\MT\on{SO}(2)$.

\subsection{Decorated moduli spaces as a pushforward}

\begin{rmk}
  As the operad map $\dF[\theta]{2n}\to\caW^\theta$ lands in the genus-$0$
  components of $\caW^\theta$, the right $\dbF[\theta]{2n}$-functor $\bW^\theta$
  splits as a disjoint union $\coprod_{g\ge 0}\dbW[\theta]g$. Thus, the
  pushforward $\caW^\theta\otimes^\bbL_{\dF[\theta]{2n}}({-})$ splits as a
  disjoint union of bar constructions
  $|\B_\bullet(\dbW[\theta]g,\dbF[\theta]{2n},{-})|$. This is the ‘grading by
  genus’ that we promised at the beginning of this section.
\end{rmk}

The aim of this subsection is to give the proof of \cref{prop:A}, i.e.\ to provide, for each
$\dE[\theta]{2n}$-algebra $A$ and each $g\ge 0$, equivalences
$W_{g,1}^\theta[A]\simeq
|\B_\bullet(\dbW[\theta]g,\dbF[\theta]{2n},\kappa^*A)|$.

\begin{constr}\label{constr:rmod}
  Recall the spaces $\tilde\caW^\theta_g(r)$ from \cref{constr:tW}. For any
  $g\ge 0$, the family $(\tilde\caW^\theta_g(r))_{r\ge 0}$ is a right
  $\dF[\theta]{2n}$-module: We have maps
  $\tilde\caW^\theta_g(r)\times \prod_{i}\dF[\theta]{2n}(r_i)\to
  \tilde\caW^\theta_g(\sum_i r_i)$ taking $(\eta,\elll,\beta)$ and
  $(\alpha_i,\gamma_i,\eta_i,\zeta_i)$ to
  $(\hat\eta,\hat\elll,\beta\circ (\beta_1\sqcup\dotsb\sqcup \beta_r))$, where
  $\beta_i$ are uniquely determined as before, and where $\hat\eta$ is
  $\beta\circ \eta_i\circ\beta^{-1}\circ \eta$ inside
  $\eta^{-1}(\beta(\{i\}\times D^{2n})$ and $\eta$ elsewhere, and $\hat\elll$ is
  $\zeta_i(u_i)\circ T\beta|_{\smash{\ul{r}\times D^{2n}}}^{-1}\circ T\eta$
  inside each $\eta^{-1}(\beta(\{i\}\times D^{2n})$ and $\elll$ else.

  These structure maps are equivariant with respect to the action of
  $\Diff_\partial(W_{g,1})$ on each $\tilde\caW^\theta_g(r)$, and the induced
  map
  $\caW^\theta_g(r)\times\prod_i \dF[\theta]{2n}(r_i)\to \caW^\theta_g(\sum_i
  r_i)$ is part of the operadic composition of $\caW^\theta$, precomposed with
  the operad map $\dF[\theta]{2n}\to\caW^\theta$.\looseness-1
\end{constr}

\begin{proof}[Proof of \cref{prop:A}]
  As $\dF[\theta]{2n}\to \dE[\theta]{2n}$ is an equivalence of proper
  operads, the induced map on bar constructions
  $|\B_\bullet(\mdE[\theta]{W_{g,1}},\dbF[\theta]{2n},\kappa^*A)|\to
  |\B_\bullet(\mdE[\theta]{W_{g,1}},\dbE[\theta]{2n},A)|$ is an equivalence.  It
  is hence our goal to establish an equivalence of
  $\dbF[\theta]{2n}$-functors between $\dbW[\theta]g$ and
  $\mdE[\theta]{W_{g,1}}$.

  To this end, we note that the $\Diff_\partial(W_{g,1})$-equivariant right
  $\dF[\theta]{2n}$-module structure on $(\tilde\caW^\theta_g(r))_{r\ge 0}$
  gives rise to a right $\dbF[\theta]{2n}$-functor $\tilde\bW^\theta$ with
  values in $\Diff_\partial(W_{g,1})$-spaces, and since the action of
  $\Diff_\partial(W_{g,1})$ on each $\tilde\caW^\theta(r)$ is free and proper,
  the canonical augmentation of $\dbF[\theta]{2n}$-functors
  $\tilde\bW^\theta_g\sslash \Diff_\partial(W_{g,1})\to
  \tilde\bW^\theta_g/\Diff_\partial(W_{g,1}) = \bW^\theta_g$ is an equivalence.

  Next, we generalise \cref{constr:f2n} in two ways: We consider embeddings into
  $W_{g,1}$ instead of only into $D^{2n}=W_{0,1}$, and we let the framing of
  $W_{g,1}$ vary. More precisely, let
  $\caV_g(r)\subset \ul\Emb^\theta(\ul{r}\times D^{2n},W_{g,1})\times
  \Emb_\partial(W_{g,1},D^{2n,\infty})\times [0,\infty)\times
  \Fr^\theta_\partial(W_{g,1},\ell_{g,1})^{[0,\infty)}$ contain all tuples
  $(\alpha,\gamma,\ell,\eta,u,\zeta)$ satisfying the very same conditions as in \cref{constr:f2n}
  we only replace $\zeta(0)=\ell_{0,1}$ by $\zeta(0)=\ell$.
  We have a map $\caV_g(r)\to \tilde\caW^\theta_g(r)$ taking such a tuple
  to $(\eta,\zeta(u),\beta)$, and we have a map
  $\smash{\caV_g(r)\to \ul\Emb^\theta(\ul{r}\times D^{2n},W_{g,1})}$ that only remembers
  $(\alpha,\gamma,\ell)$. Both maps are equivalences: For the second map, we can
  run the same proof as \cref{lem:eqF2n}, and for the first one, we note that
  the fibre of any $(\eta,\elll,\beta)$ is parametrised by the contractible
  space of real numbers $t^1,\dotsc,t^r\ge 0$ and paths $\zeta$ inside
  $\Fr^\theta_\partial(W_{g,1},\ell_{g,1})$ \emph{ending} in $\elll$, such that
  $\zeta(s)\circ T\eta^{-1}\circ T\beta|_{\ul{r}\times D^{2n}}=\ell_{0,1}$ holds
  inside $\{i\}\times D^{2n}$ for all $s\ge t_i$.

  Both maps are morphisms of right $\dF[\theta]{2n}$-modules, and hence induce
  morphisms of $\dbF[\theta]{2n}$-functors
  $\udbE[\theta]{W_{g,1}}\leftarrow \bV_g\to \tilde\bW^\theta_g$. As
  all three $\dbF[\theta]{2n}$-functors attain values in
  $\Diff_\partial(W_{g,1})$-spaces and the maps are equivariant, we obtain the
  desired zig-zag of $\dF[\theta]{2n}$-functors
  \[\mdE[\theta]{W_{g,1}}\longleftarrow \bV_g\sslash \Diff_\partial(W_{g,1})\longrightarrow
    \tilde\bW_g^\theta\sslash \Diff_\partial(W_{g,1})\longrightarrow \bW_g^\theta.\] By
  applying $\B_\bullet({-},\dbF[\theta]{2n},\kappa^* A)$, we get a
  zig-zag of levelwise equivalences of simplicial spaces, all of which are
  proper as all components of the above right $\dF[\theta]{2n}$-modules are
  Hausdorff and $\frS$-free. This gives the desired equivalence on geometric
  realisations.
\end{proof}

\begin{constr}\label{constr:Wgstab}
  Using the equivalence from \cref{prop:A}, the unary operation
  $\sigma\in \caW^\theta_1(1)$ induces stabilisation maps
  $\smash{W^\theta_{g,1}[A]\to W^\theta_{g+1,1}[A]}$. The equivalence from
  \cref{prop:A} also tells us that the homotopy type of $\dW[\theta]{*,1}[A]$
  carries an $A_\infty$-structure, and by abuse of notation, we may write
  $\Omega\B\dW[\theta]{*,1}\coloneqq
  \Omega\B(\caW^\theta\otimes^\bbL_{\dF[\theta]{2n}}\kappa^*A)$.\looseness-1

  If $A$ is path-connected, then $W^\theta_{g,1}[A]$ is path-connected by
  \cref{lem:Actd}, so by the group-completion theorem, we obtain an equivalence
  \[\textstyle
    \on{hocolim}_{g\to\infty}W_{g,1}^\theta[A]^+\simeq\Omega_0\B(\caW^\theta\otimes^\bbL_{\dF[\theta]{2n}}\kappa^*
    A)=:\Omega_0\B\dW[\theta]{*,1}[A].\]
\end{constr}

We close this section by noting that we can use \cref{prop:A} to deduce
homological stability of $\dW[\theta]{\bullet,1}[A]$, generalising
\cite[Thm.\,\textsc{e}]{Bonatto}, from the stability results for moduli spaces
of surfaces \cite{Harer-1984,RW-2016} and of high-dimensional manifolds
\cite{Galatius-RW-2018}:

\begin{thm}\label{thm:A}
  Let $\theta\colon L\to \B\on{O}(2n)$ be a spherical tangential structure with
  $\pi_1$-injective $\theta$, and let $A$ be an $\dE[\theta]{2n}$-algebra.  If
  $2n\ge 6$ (or $2n=2$ and $\theta$ is admissible to \cite[Thm.\,7.1]{RW-2016}),
  then the maps $W^\theta_{g,1}[A]\to W^\theta_{g+1,1}[A]$ induce isomorphisms
  in $H_i(-;\Z)$ for $i\le \frac12\kern1px g-\frac32$ (for $2n=2$, the slope is
  different, but coincides with the one from \cite[Thm.\,7.1]{RW-2016}).
\end{thm}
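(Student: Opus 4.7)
The plan is to reduce stability of $\dW[\theta]{g,1}[A]$ to the known stability of the operation spaces $\caW^\theta_g(r)$ recorded in \cref{cor:Wgood}, using the simplicial model from \cref{prop:A}.

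First, I would invoke \cref{prop:A} to identify $\dW[\theta]{g,1}[A]$ with the geometric realisation of the proper simplicial space $\B_\bullet(\dbW[\theta]g,\dbF[\theta]{2n},\kappa^*A)$, and note, as in \cref{constr:Wgstab}, that the stabilisation map is \emph{by definition} induced by the morphism of right $\dbF[\theta]{2n}$-modules $\dbW[\theta]g\to \dbW[\theta]{g+1}$ which on operation spaces is the stabilisation $\sigma\colon \caW^\theta_g(r)\to \caW^\theta_{g+1}(r)$ from \cref{constr:stab}. Thus the task reduces to showing that this map of simplicial spaces induces a $H_i(-;\Z)$-isomorphism on geometric realisations in the claimed range.

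Second, I would establish the levelwise version of this statement, uniformly in the simplicial degree $p$. At level $p$, the map has the form
\[\coprod_r (\sigma\times\on{id})/\frS_r \colon \coprod_r \caW^\theta_g(r)\times_{\frS_r} Z_p^r \longrightarrow \coprod_r \caW^\theta_{g+1}(r)\times_{\frS_r} Z_p^r,\]
where $Z_p$ is the $p$-fold iterate of the monad $\dbF[\theta]{2n}$ applied to $U^{\dE[\theta]{2n}}\kappa^*A$. Since $\caW^\theta$ is proper, the $\frS_r$-action on $\caW^\theta_g(r)\times Z_p^r$ is free, so the quotient models the Borel construction. The Künneth theorem combined with the $r$-independent slope of \cref{cor:Wgood} shows that $\sigma\times\on{id}$ is a $\frS_r$-equivariant $H_q(-;\Z)$-iso in the range $q\le \frac12\kern1px g -\frac32$; hence the associated map of Serre spectral sequences for the Borel fibrations over $\B\frS_r$ is an iso on $E^2$ in rows $q\le \frac12\kern1px g-\frac32$, yielding the desired levelwise iso on the quotient. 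Disjoint unions over $r$ preserve the range.

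Third, the simplicial spaces are proper (the same verification used in the proof of \cref{prop:A} applies), so the standard homology spectral sequence
\[E^1_{p,q} = H_q(B_p)\Rightarrow H_{p+q}(|B_\bullet|)\]
is available for both source and target, with a map of spectral sequences induced by the stabilisation. A uniform-in-$p$ levelwise iso in rows $q\le \frac12\kern1px g-\frac32$ gives an iso on $E^1$ in those rows, hence on $E^\infty_{p,q}$ in the same rows; since the spectral sequence is first-quadrant, this forces a $H_i$-iso on the abutment in total degree $i\le \frac12\kern1px g-\frac32$, as claimed.

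The main obstacle is the uniformity of the stability slope across the number of punctures $r$, which is precisely what \cref{cor:Wgood} guarantees via the classical stability theorems \cite{RW-2016} in the case $2n=2$ and \cite{Galatius-RW-2018} for $2n\ge 6$; once in hand, the remainder is routine spectral-sequence bookkeeping, and the improved slope in the $2n=2$ case propagates faithfully to the realisation.
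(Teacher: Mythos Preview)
Your proposal is correct and follows essentially the same route as the paper's proof: reduce via \cref{prop:A} to the simplicial model $\B_\bullet(\dbW[\theta]g,\dbF[\theta]{2n},\kappa^*A)$, establish the levelwise homology isomorphism from \cref{cor:Wgood} using that $\frS_r$ acts freely on $\caW^\theta_g(r)$ (the paper phrases this as ``a spectral sequence argument'' where you spell out the Borel/Serre comparison), and then pass to the realisation via the skeletal spectral sequence for proper simplicial spaces. The only cosmetic difference is that the paper cites \cite[Prop.\,A1:iv]{Segal-1974} explicitly for the last step.
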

\begin{proof}
  Let us write $i_g\coloneqq \frac12\kern1px g-\frac32$ (or the slope from
  \cite[Thm.\,7.1]{RW-2016} for $2n=2$).  The map in question is equivalent to
  the map
  $|\B_\bullet(\dbW[\theta]g,\dbF[\theta]{2n},\kappa^*A)|\to
  |\B_\bullet(\dbW[\theta]{g+1},\dbF[\theta]{2n},\kappa^*A)|$, induced by the
  morphism $\dbW[\theta]g\to \dbW[\theta]{g+1}$ of right
  $\dbE[\theta]{2n}$-functors that, on each space $X$, is a union of maps
  $\on{stab}\times_{\frS_r} \on{id}\colon \caW^\theta_g(r)\times_{\frS_r} X^r\to
  \caW^\theta_{g+1}(r)\times_{\frS_r} X^r$. Since $\on{stab}$ induces isomorphisms
  in homology in degree at most $i_g$, and $\frS_r$ acts freely on both sides,
  and $\caW^\theta_g(r)$ and $\caW^\theta_{g+1}(r)$ are Hausdorff, a spectral
  sequence argument shows that the same holds for each of the maps
  $\on{stab}\times_{\frS_r}\on{id}$.

  This shows that the map
  $\B_\bullet(\dbW[\theta]g,\dbF[\theta]{2n},\kappa^*A)\to
  \B_\bullet(\dbW[\theta]{g+1},\dbF[\theta]{2n},\kappa^*A)$ induces levelwise
  isomorphisms in homology in degrees at most $i_g$. Since both source and
  target are proper simplicial spaces, \cite[Prop.\,A1:iv]{Segal-1974} enables
  us to consider the map among ‘thick’ geometric realisations instead.  Here we
  can invoke the spectral sequence associated with the skeletal filtration, and
  a comparison argument for spectral sequences shows that the geometric
  realisation is a homology equivalence in that range as well.
\end{proof}

\section{Splitting pushforwards to operads with homological stability}
\label{sec:split}
\Cref{prop:A} motivates us to study the group-completion of
$\caW^\theta\otimes^\bbL_{\dF[\theta]{2n}} \kappa^*A$ for a spherical tangential
structure $\theta$ and an $\dE[\theta]{2n}$-algebra $A$.  Here we use that if
$\theta\colon L\to \on{BO}(2n)$ is $n$-connected, then
$\caW^\theta$ is an operad with homological stability as in
\cite{BBPTY}, and we prove a splitting result similar to \cite[§\,5]{BBPTY} and
\cite[§\,5]{Bianchi-Kranhold-Reinhold} for pushforwards.  Before
carrying out this approach, we first have to switch to a based framework.\looseness-1

\begin{constr}
  The operad $E_0$ has exactly two operations, namely the identity $\mathbf{1}$
  and a single nullary operation $\mathbf{0}$. Thus, $E_0$-algebras are the same
  as based spaces.  An operad under $E_0$ is the same as an operad $\caO$
  together with a preferred nullary operation $\mathbf{0}_\caO$.  For any operad
  $\caO$ under $E_0$, we have \emph{capping maps}
  $\on{cap}\colon\caO(r)\to \caO(0)$ given by precomposing an $r$-ary operation
  with $r$ copies of $\mathbf{0}_\caO$.\looseness-1

  Let $\caO$ be an operad under $E_0$. The forgetful functor from
  $\caO$-algebras to based spaces has a strict left-adjoint, namely
  $\tF^\caO\coloneqq \caO\otimes_{E_0}({-})$. We denote the monad associated to
  this adjunction by $\tilde \bbO$; it should be seen as a \emph{reduced}
  version of $\bbO$. More explicitly, $\tilde\bbO X$ is the (strict) coequaliser
  of the two canonical maps $\bbO \bE_0 X\rightrightarrows \bbO X$.
\end{constr}

Under mild point-set topological assumptions, we can also use these reduced
monads to describe the derived pushforward, similar to \cref{rem:freeRes}:

\begin{rmk}\label{rmk:wellP}
  If $\caP$ is proper and $A$ is a $\caP$-algebra whose underlying based space
  is well-pointed (in short, a ‘well-pointed $\caP$-algebra’), then the
  augmented simplicial $\caP$-algebra $\B_\bullet(\tF^\caP,\tilde\bbP,A)\to A$
  is a proper simplicial space (see e.g.\ \cite[Prop.\,1.4.42]{Zeman} for
  details), has an extra degeneracy, and hence is a $\tilde\bbP$-free simplicial
  resolution of $A$ in the sense of
  \cite[Def.\,8.18]{Galatius-Kupers-RW-2018}. If $\caO$ is $\frS$-free as well
  and $\caP\to \caO$ is a map of operads, then, since $\tilde\bbO$ commutes with
  geometric realisations, the left-derived pushforward
  $\caO\otimes^\bbL_\caP A$ (whose preferred point-set model for us remains
  $|\B_\bullet(\bO,\bP,A)|$ as in \cref{rem:freeRes}) is equivalent, as an
  $\caO$-algebra, to the ‘reduced’ two-sided bar construction
  $|\B_\bullet(\tilde\bbO,\tilde\bbP,A)|$.\looseness-1
\end{rmk}

\begin{lem}\label{lem:freeWell}
  Let $\caP$ a proper operad, let $\caO$ be a $\frS$-free operad under $E_0$
  such that $\{\mathbf{0}_\caO\}\hookrightarrow \caO(0)$ is a cofibration, and
  let $\caP\to\caO$ be an operad map. Moreover, let $A$ be a
  $\caP$-algebra. Then the $\caO$-algebra $\caO\otimes^\bL_\caP A$ is
  well-pointed.
\end{lem}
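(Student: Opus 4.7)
The plan is to use the explicit point-set model $\caO\otimes^\bbL_\caP A = |\B_\bullet(\bbO, \bbP, A)|$ from \cref{rem:freeRes} and to exhibit the basepoint inclusion as a composition of closed Hurewicz cofibrations. The $E_0$-algebra structure that $\caO\otimes^\bbL_\caP A$ inherits from the map $E_0\to\caO$ picks out the basepoint $\mathbf{0}_\caO\in\caO(0)$, and $\caO(0)$ sits inside the $0$-simplices $\bbO A = \coprod_{r\ge 0}\caO(r)\times_{\frS_r}A^r$ as the $r=0$ summand. I would therefore factor the basepoint inclusion as
\[
  \{\mathbf{0}_\caO\}\hookrightarrow \caO(0)\hookrightarrow |\B_\bullet(\bbO,\bbP,A)|,
\]
where the first arrow is a cofibration by the hypothesis on $\caO$, so it suffices to verify that the second map is a cofibration.

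For this, I would observe that $\caO(0)$ sits as a \emph{constant} sub-simplicial space of $\B_\bullet(\bbO,\bbP,A)$ via the arity-$0$ summand of the outer $\bbO$ at each level, compatibly with face and degeneracy maps: none of them modifies the outermost $\bbO$ on this summand, since face maps act either via monad multiplications of $\bbP$ or via the $\caP$-action $\bbP A\to A$, while degeneracies insert units of $\bbP$, all of which touch only the inner layers. At each level $p$, the inclusion $\caO(0)\hookrightarrow \bbO\bbP^pA$ is a summand inclusion into a coproduct, hence a closed Hurewicz cofibration.

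The main obstacle is to upgrade this level-wise cofibration to a cofibration on realisations. Here I would invoke that $\B_\bullet(\bbO,\bbP,A)$ is a \emph{proper} simplicial space, which follows from $\caP$ being proper and $\caO$ being $\frS$-free via the same argument alluded to in \cref{rmk:wellP}, using the Hausdorffness assumption baked into \cref{defi:opd}. For the constant sub-simplicial space $\caO(0)$, the latching objects all coincide with $\caO(0)$ itself, so the Reedy condition for the inclusion reduces precisely to the combination of the level-wise cofibration property established above and the degeneracy cofibration property that defines properness of $\B_\bullet(\bbO,\bbP,A)$. A standard argument using the skeletal filtration of the geometric realisation then shows that this Reedy cofibration realises to a Hurewicz cofibration $\caO(0)\hookrightarrow|\B_\bullet(\bbO,\bbP,A)|$. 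Composing with the hypothesis cofibration $\{\mathbf{0}_\caO\}\hookrightarrow\caO(0)$ yields the well-pointedness of $\caO\otimes^\bbL_\caP A$.
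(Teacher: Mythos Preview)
Your proposal is correct and follows essentially the same approach as the paper. The paper's proof also exhibits the basepoint inclusion as the realisation of a Reedy cofibration into $\B_\bullet(\bbO,\bbP,A)$ and then invokes \cite[Thm.\,18.6.7:1]{Hirschhorn}; the only cosmetic difference is that the paper works directly with the constant simplicial point $*_\bullet\hookrightarrow\B_\bullet(\bbO,\bbP,A)$ landing at $\mathbf{0}_\caO$, whereas you factor through the constant sub-simplicial space $\caO(0)$ first and then compose with the hypothesis cofibration $\{\mathbf{0}_\caO\}\hookrightarrow\caO(0)$---but the Reedy verification is identical in both cases (level $0$ uses the summand inclusion, levels $p\ge 1$ reduce to properness).
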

\begin{proof} 
  If $*_\bullet$ denotes the simplicial singleton, then the basepoint inclusion
  of $\caO\otimes^\bL_\caP A$ is the geometric realisation of the simplicial
  map $f_\bullet\colon *_\bullet\to\B_\bullet(\bbO,\bbP,A)$ that freely extends
  \[\{\mathbf{0}_\caO\}\hookrightarrow \caO(0)\subset \coprod_{r\ge
      0}\caO(r)\times_{\frS_r} A^r = \bbP A=\B_0(\bbO,\bbP,A).\]%
  As this map and all degeneracies of $\B_\bullet(\bbO,\bbP,A)$ are
  cofibrations, $f_\bullet$ is a Reedy cofibration. By
  \cite[Thm.\,18.6.7:1]{Hirschhorn}, $|f_\bullet|$ is a cofibration as well.
\end{proof}

The following definition is a special case of \cite[Def.\,4.5]{BBPTY}:

\begin{defi}
  An \emph{operad with homological stability} (\acr{OHS}) is given by:
  \begin{enumerate}
  \item a proper operad $\caO$ with a grading
    $\caO(r)=\coprod_{g\ge 0}\caO_g(r)$ such that the $\frS_r$-actions restrict
    to $\caO_g(r)$ and the composition is graded,\looseness-1
  \item an $A_\infty$-operad $\caA$, a preferred nullary
    $\mathbf{0}_\caA\in \caA(1)$, and a map of operads $\mu\colon \caA\to \caO$
    (turning $\caO$ into an operad under $E_0$) landing inside $\caO_0$, such
    that $\mu(\caA(2))\subseteq\caO_0(2)$ lies in a single path-component,
  \end{enumerate}
  and we require the following stability condition: There is a
  $\sigma\in \caO_1(1)$, inducing maps
  $\on{stab}\colon \caO_g(r)\to \caO_{g+1}(r)$ by postcomposition, such that for
  each $r$, the map of towers
  \[
    \begin{tikzcd}
      \caO_0(r)\ar{d}[swap]{\mathrm{cap}}\ar{r}{\mathrm{stab}}\ar[d] &\caO_1(r)\ar{r}{\mathrm{stab}}\ar{d}[swap]{\mathrm{cap}} & \caO_2(r)\ar[r]\ar{d}[swap]{\mathrm{cap}} &\dotsb\\
      \caO_0(0)\ar{r}[swap]{\mathrm{stab}} & \caO_1(0)\ar{r}[swap]{\mathrm{stab}} & \caO_2(0)\ar[r]& \dotsb
    \end{tikzcd}
  \]
  induces a homology equivalence on homotopy colimits. Then
  $\mu(\caA)\subseteq\caO_0$, and each $\caO$-algebra $A$ is in particular a
  homotopy-commutative $\caA$-algebra.
\end{defi}

\begin{expl}
  For each spherical tangential structure $\theta\colon L\to \on{BO}(2n)$, the
  $\theta$-framed generalised surface operad $\caW^\theta$ has a natural grading
  $\caW^\theta(r)=\coprod_{g\ge 0}\dsW[\theta]g(r)$, and it receives a map
  $\mu\colon F_1\to \dF[\theta]{2n}\to \caW^\theta$ from an
  $A_\infty$-operad. Since $2n\ge 2$, this map factors through $F_2$, and hence
  $\mu(F_1(2))\subseteq \dsW[\theta]0(2)$ lies in a single path-component.

  If $L$ is $n$-connected, then \cref{cor:Wgood}:2 tells us that
  the stability condition is satisfied as well, using the operation 
  $\sigma\in \dsW[\theta]1(1)$ from \cref{constr:stab} as a propagator. Therefore,
  $\caW^\theta$ is an operad with homological stability, compare
  \cite[Thm.\,7.4]{BBPTY}. Its initial algebra
  $\caW^\theta(0)$
  group-completes to $\Z\times\Omega^\infty_0\MT\theta$ by \cref{cor:MT}.
\end{expl}

\begin{rmk}
  Let $\caO$ be an \acr{OHS} and let $\caP$ be another operad under $E_0$
  with $\caP(0)\simeq *$, which comes with a map $\rho\colon \caP\to \caO$ of
  operads under $E_0$. Then we then automatically have
  $\rho(\caP(r))\subseteq \caO_0(r)$: Because $\caP(0)$ is connected and $\rho$
  is a map under $E_0$, implying
  $\rho(\mathbf{0}_\caP)=\mu(\mathbf{0}_\caA)\in \caO_0(0)$, we have
  $\rho(\caP(0))\subseteq \caO_0(0)$, and therefore
  \begin{align*}
    \on{cap}(\rho(\caP(r))\cap \caO_g(r))&\subseteq \rho(\on{cap}(\caP(r)))\cap \on{cap}(\caO_g(r))\\
                                         &\subseteq \rho(\caP(0))\cap \caO_g(0)\\
                                         &\subseteq \caO_0(0)\cap \caO_g(0).
  \end{align*}
\end{rmk}

\begin{constr}
  For what follows, we assume that $(\caO,\mu\colon \caA\to\caO)$ is an
  \acr{OHS} and $\caP$ is another proper operad with $\caP(0)\simeq *$ and a map
  $\rho\colon \caP\to\caO$ under $E_0$.

  We furthermore assume the existence of an operad map $\caO\to E_\infty$ under
  $E_0$.
  One way to achieve this map is to replace the above structure maps $\mu$ and
  $\rho$ by the equivalent maps $\mu\times\on{id}_{E_\infty}$ and
  $\rho\times\on{id}_{E_\infty}$, respectively, with $E_0$ being diagonally
  included into these product operads, and then consider the projection
  $\caO\times E_\infty\to E_\infty$, compare \cite[Cor.\,4.10]{BBPTY} and
  \cite[Lem.\,5.11]{Bianchi-Kranhold-Reinhold}.  As in their proof, one checks
  that this replacement neither changes a condition nor a consequence of
  \cref{prop:B}.
\end{constr}

\begin{prop}\label{prop:B}
  Let $\caO$ be an operad with homological stability, let $\caP$ be a proper
  operad with $\caP(0)\simeq *$, and let $\caP\to \caO$ be a map of operads
  under $E_0$. Then the map of $\caO$-algebras
  \[(\caO\otimes^\bbL_\caP A)\to (\caO\otimes^\bbL_\caP *) \times (\dE\infty
    \otimes^\bbL_\caP A)\]%
  that is induced by $A\to *$ and $\caO\to E_\infty$ induces an equivalence on
  group-completions.
\end{prop}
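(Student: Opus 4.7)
The plan is to adapt the strategy of \cite[§\,5]{BBPTY} (which treats $\caP=E_0$) and \cite[§\,5]{Bianchi-Kranhold-Reinhold} (which treats $\caP$ coming from the action of a topological group on based spaces).  The structural backbone of those proofs should carry over with minor modifications to a general proper operad $\caP$ mapping to $\caO$ under $E_0$.

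First I would replace $A$ by a cofibrant, hence well-pointed, $\caP$-algebra so that by \cref{rmk:wellP} and \cref{lem:freeWell} both sides of the proposed map are modelled by the reduced two-sided bar constructions $|\B_\bullet(\tilde\bbO,\tilde\bbP,A)|$ and $|\B_\bullet(\tilde\bbO,\tilde\bbP,*)|\times|\B_\bullet(\tilde\bbE_\infty,\tilde\bbP,A)|$, each inheriting a genus grading from the outer operad factor.  On each simplicial level, the map $\Phi$ in question factors as the composition of the collapse $A\to *$ and the change-of-operads $\tilde\bbO\to\tilde\bbE_\infty$ applied in the outer slot, and both sides carry compatible $A_\infty$-algebra structures coming from $\mu\colon\caA\to\caO$.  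By the group-completion theorem, it therefore suffices to show that $\Phi$ induces an isomorphism on $H_*(-)[\pi_0(-)^{-1}]$; since this localisation is computed by the mapping telescope under the stabilisation operation $\sigma\in\caO_1(1)$, the task reduces to comparing the homology of $\on{hocolim}_g(-)_g$ on both sides.

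The heart of the proof is then a levelwise homological splitting on this stable colimit.  At each simplicial level $p$, the defining property of an \acr{OHS} supplies a homology equivalence between $\on{hocolim}_g\caO_g(r)$ and $\on{hocolim}_g\caO_g(0)$ via capping.  Combined with $\frS$-freeness of $\caO_g(r)$ and well-pointedness of the $\caP$-algebras involved, this should yield a homological splitting of the form
\[\textstyle H_*(\on{hocolim}_g\caO_g(r)\times_{\frS_r}(\tilde\bbP^p A)^r)\cong H_*(\on{hocolim}_g\caO_g(0))\otimes H_*(E_\infty(r)\times_{\frS_r}(\tilde\bbP^p A)^r),\]
the second factor emerging because only the $E_\infty$-part of the $\caO$-action survives once the capped $\caO_g(0)$-factor is decoupled.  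The main obstacle will be making this decoupling precise and compatible with the simplicial face and degeneracy maps, the $\caP$-action on the inner slots, and the symmetric group actions: concretely, one wants a bisimplicial spectral sequence whose $E^2$-page realises the above tensor splitting and whose convergence returns the claimed homology equivalence on geometric realisations.  A reverse application of the group-completion theorem then delivers the asserted weak equivalence of loop spaces.
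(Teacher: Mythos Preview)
Your overall architecture matches the paper's: replace $A$ by a well-pointed $\caP$-algebra, model both sides by the reduced bar constructions $|\B_\bullet(\tilde\bbO,\tilde\bbP,-)|$, use the genus grading and the telescope over stabilisation by $\sigma$, and then invoke the group-completion theorem together with a spectral-sequence argument on realisations of proper simplicial spaces. That part is fine.

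Where you diverge from the paper, and where your sketch becomes shaky, is the ``heart'' step. You try to establish a summand-by-summand K\"unneth-type splitting for fixed arity $r$,
\[H_*\bigl(\on{hocolim}_g\caO_g(r)\times_{\frS_r}(\tilde\bbP^pA)^r\bigr)\cong H_*\bigl(\on{hocolim}_g\caO_g(0)\bigr)\otimes H_*\bigl(E_\infty(r)\times_{\frS_r}(\tilde\bbP^pA)^r\bigr),\]
and then worry about compatibility with the simplicial structure, calling this ``the main obstacle''. This formula is not correct as stated (the capping equivalence $\on{hocolim}_g\caO_g(r)\to\on{hocolim}_g\caO_g(0)$ does not by itself produce such a tensor decomposition for fixed $r$), and in any case this is not how the paper proceeds. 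The paper instead packages the entire levelwise statement into a single black-box lemma already proved in \cite[Thm.\,5.4]{BBPTY}: for \emph{any} based space $X$, the map
\[\Psi_X\colon \tilde\bbO_\infty(X)\longrightarrow \tilde\bbO_\infty(*)\times\tilde\bbE_\infty(X)\]
is a homology equivalence. The level-$p$ map of $\Phi_\infty$ is then identified with $\Psi_{\tilde\bbP^pA}$ after postcomposing with $\tilde\bbO_\infty(\tilde\bbP^p*\to *)\times\on{id}$, and this postcomposition is an equivalence precisely because $\caP(0)\simeq *$ forces $\tilde\bbP^p*\to *$ to be one. Since $\Psi$ is a natural transformation of right $\tilde\bbP$-functors, compatibility with faces and degeneracies is automatic; there is no bisimplicial obstacle and no need to argue arity-by-arity. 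You should also note that this is exactly where the hypothesis $\caP(0)\simeq *$ is used, which your sketch does not make explicit.
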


\begin{rmk}\label{rmk:p0con}
  Since $\caP(0)$ is contractible, the basepoint inclusion into each
  $\tilde\bP^p(*)$ is an equivalence, and hence the canonical map
  \[\caO(0)=\tilde\bO(*)=\B_0(\tilde\bO,\tilde\bE_0,*)\hookrightarrow
    |\B_\bullet(\tilde\bO,\tilde\bE_0,*)|\to
    |\B_\bullet(\tilde\bO,\tilde\bP,*)|=\caO\otimes^\bbL_{\caP} *\]%
  is an equivalence of $\caO$-algebras, so \cref{prop:B} in particular tells us
  \[\Omega\B(\caO\otimes^\bbL_\caP A)\simeq \Omega \B\caO(0)\times
    \Omega\B(E_\infty\otimes^\bbL_\caP A).\]
\end{rmk}

We point out that \cref{prop:B} is a variation of \cite[Thm.\,5.4]{BBPTY} and
\cite[Thm.\,5.9]{Bianchi-Kranhold-Reinhold}, and consequently, the proof is very
similar:\looseness-1

\begin{constr}
  We introduce a ‘stable’ version of $\tilde\bbO$: For each based space $X$, the
  space $\tilde \bbO(X)$ splits as $\smash{\coprod_g \tilde\bbO_g (X)}$, and we
  consider the mapping telescope\looseness-1
  \[\tilde \bbO_\infty(X)\coloneqq \on{hocolim}\left(\hspace*{-3px}
      \begin{tikzcd}\bbO_0(X)\ar{r}{\mathrm{stab}} &
      \tilde\bbO_1(X)\ar{r}{\mathrm{stab}} &
      \tilde\bbO_{2}(X)\ar[r] & \dotsb\end{tikzcd}\hspace*{-3px}\right).\]%
  Then the terminal map $X\to *$ of based spaces gives rise to maps
  $\tilde\bbO_g(X)\to \tilde\bbO_g(*)$ for each $g$, which are compatible with the
  stabilisations, and hence constitute a map among homotopy colimits
  $\tilde\bbO_\infty(X)\to \tilde\bbO_\infty(*)$. On the other hand, by fixing a
  path from the image of $\hat s$ inside $E_\infty(1)$ to $\mathbf{1}$, we obtain
  a homotopy from $\tilde\bbO_g(X)\to \tilde\bbE_\infty(X)$ to
  $\tilde\bbO_g(X)\to \tilde\bbO_{g+1}(X)$, followed by the map to
  $\tilde\bbE_\infty(X)$. We hence obtain a map out of the mapping telescope
  $\tilde\bbO_\infty(X)\to \tilde\bbE_\infty(X)$. The product map is called
  \[\Psi_X\colon \tilde\bO_\infty(X)\to \tilde\bO_\infty(*)\times
    \tilde\bE_\infty(X).\]
\end{constr}

The following statement is shown in the proof of \cite[Thm.\,5.4]{BBPTY} (it
does not use the second operad $\caP$ at all):

\begin{lem}\label{lem:key}
  For each based space $X$, the map $\Psi_X$ is a homology equivalence.
\end{lem}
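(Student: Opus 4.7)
The strategy is to filter both sides of $\Psi_X$ by the arity of the underlying operation and to compare the resulting skeletal spectral sequences, following the blueprint of \cite[Thm.\,5.4]{BBPTY}. On the source, set
\[F_r \tilde\bO_\infty(X) \coloneqq \on{hocolim}_g \coprod_{s\le r} \caO_g(s)\times_{\frS_s} X^{\times s}\]
modulo the basepoint identifications defining $\tilde\bO$; the $r$\textsuperscript{th} subquotient is $\bigl(\on{hocolim}_g \caO_g(r)\bigr)_+ \wedge_{\frS_r} X^{\wedge r}$. On the target, the factor $\tilde\bO_\infty(*)$ sits in arity $0$, while $\tilde\bE_\infty(X)$ carries an arity filtration with subquotients $(E_\infty(r))_+ \wedge_{\frS_r} X^{\wedge r}$, yielding a combined $r$\textsuperscript{th} subquotient of the form $(\tilde\bO_\infty(*))_+ \wedge (E_\infty(r))_+ \wedge_{\frS_r} X^{\wedge r}$ for $r \ge 1$.

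The identification of subquotients proceeds in two stages. First, the \acr{OHS} hypothesis implies that the capping map $\on{hocolim}_g \caO_g(r) \to \on{hocolim}_g \caO_g(0) = \tilde\bO_\infty(*)$ is a homology equivalence. Second, the assumption that $\mu(\caA(2)) \subseteq \caO_0(2)$ lies in a single path-component yields, at sufficiently high genus, an explicit ``handle slide'' operation in $\caO$ that realises any adjacent transposition of inputs by a conjugation; this forces the $\frS_r$-action on $\on{hocolim}_g \caO_g(r)$ to be homologically trivial. Combined with the contractibility of $E_\infty(r)$, the $r$\textsuperscript{th} subquotients of source and target become isomorphic in homology to $H_*(\tilde\bO_\infty(*)) \otimes H_*((X^{\wedge r})_\h{\frS_r})$, and direct inspection shows that $\Psi_X$ induces this isomorphism: it caps the $\caO_g$-coordinate to $\tilde\bO_\infty(*)$ and uses $\caO \to E_\infty$ on the remaining component.

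A spectral sequence comparison for the arity filtrations then gives the desired homology equivalence on the abutments. To make this comparison valid, one needs both filtrations to be by cofibrations, which follows from the properness of $\caO$ (freeness of the $\frS_r$-action together with Hausdorffness of each $\caO_g(r)$) and from well-pointedness of $X$, which can be arranged by passing to a whiskering if necessary without changing either side up to homology.

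The principal obstacle is establishing the homological triviality of the $\frS_r$-action on $\on{hocolim}_g \caO_g(r)$. This is the technical heart of the argument of \cite[Thm.\,5.4]{BBPTY}; it rests on constructing, for each adjacent transposition, a concrete ``interchange'' operation in $\caO_g(r)$ at sufficiently large $g$, whose existence is guaranteed by the $A_\infty$-commutativity hypothesis on $\mu$ together with the stabilisation $\sigma$. Once this is settled, the remainder of the argument is essentially formal.
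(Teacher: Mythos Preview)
The paper does not give its own proof of this lemma; it simply records that the statement is established within the proof of \cite[Thm.\,5.4]{BBPTY} and notes that the argument there makes no use of the second operad $\caP$. Your proposal goes further by actually sketching that argument, and the outline---arity filtration on both sides, identification of associated gradeds via the \acr{OHS} hypothesis, and a spectral-sequence comparison---is correct and matches the strategy of \cite{BBPTY}.

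One simplification is available in your sketch. You flag the homological triviality of the $\frS_r$-action on $\on{hocolim}_g\caO_g(r)$ as the principal obstacle, to be overcome by constructing explicit interchange (``handle slide'') operations using the homotopy-commutativity hypothesis on $\mu$. In fact this triviality is an immediate consequence of the \acr{OHS} axiom you have already invoked: the stable capping map
\[
\on{cap}\colon \on{hocolim}_g\caO_g(r) \longrightarrow \on{hocolim}_g\caO_g(0)
\]
is $\frS_r$-invariant (plugging in $r$ copies of the same nullary is insensitive to their order) and a homology isomorphism, so $\on{cap}_*\circ\sigma_* = \on{cap}_*$ forces $\sigma_* = \on{id}$ for every $\sigma\in\frS_r$. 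Even more directly, the $\frS_r$-equivariant map
\[
(\on{cap},\,\text{operad map})\colon \on{hocolim}_g\caO_g(r)\longrightarrow \tilde\bO_\infty(*)\times E_\infty(r)
\]
is a homology equivalence between free $\frS_r$-spaces, and applying $({-})_+\wedge_{\frS_r} X^{\wedge r}$ yields the required isomorphism on associated gradeds by a Cartan--Leray comparison. The hypothesis that $\mu(\caA(2))$ lies in a single path-component is not needed for this particular step; its role lies elsewhere (ensuring the $A_\infty$-structure on $\caO$-algebras is homotopy-commutative so that the group-completion theorem applies).
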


We can now show \cref{prop:B} by adapting the proof of
\cite[Thm.\,5.9]{Bianchi-Kranhold-Reinhold}:

\begin{proof}[Proof of \cref{prop:B}]
  By replacing the algebra $A$ by $\caP\otimes^\bbL_\caP A$ and applying
  \cref{lem:freeWell} if necessary, we may assume that the algebra $A$ to start
  with is well-pointed, enabling us to employ the reduced monads to describe the
  pushforward.

  As $\caP(r)\to \caO(r)$ lands in $\caO_0(r)$ for each $r\ge 0$, each
  $\tilde\bbO_g$ is a itself right $\tilde\bbP$-functor and
  $\B_\bullet(\tilde\bO,\tilde\bP,A)$ decomposes into
  $\B_\bullet(\tilde\bbO_g,\tilde\bbP,A)$. Then the map $\Phi$ from the statement
  is a union of realisations of simplicial maps
  \[\Phi_g\colon \B_\bullet(\tilde\bO_g,\tilde\bP,A)\to
    \B_\bullet(\tilde\bO_g,\tilde\bP,*)\times\B_\bullet(\tilde\bE_\infty,\tilde\bP,A).\]
  Adding the propagator gives rise to maps $\tilde\bO_g\to\tilde\bO_{g+1}$ of
  $\tilde\bP$-functors, its homotopy colimit being the $\tilde\bP$-functor
  $\tilde\bO_\infty$. Since the maps $\Phi_g$ commute with these stabilisations,
  we obtain a colimit map $\Phi_\infty$. It suffices to show that $\Phi_\infty$
  is a homology equivalence, as this implies, by a classical telescope argument,
  that $\Phi$ induces an isomorphism among localised Pontrjagin rings
  $\smash{H_*\bigl(\caO\otimes_\caP^\bbL A\bigr)[\pi_0^{-1}] \to
    H_*\bigl((\caO\otimes^\bbL_\caP *)\times (E_\infty\otimes^\bbL_\caP
    A)\bigr)[\pi^{-1}_0]}$ and by the group-completion theorem, $\Omega \B\Phi$
  is a homology equivalence of loop spaces, and hence a weak equivalence as desired.

  To this end, we note that $\Phi_\infty$ is levelwise given by
  $\tilde\bO_\infty(\tilde\bP^pA)\to \tilde\bO_\infty(\tilde\bP^p*)\times
  \tilde\bE_\infty(\tilde\bP^pA)$.  Postcomposing with
  $\tilde\bO_\infty(\tilde\bP^p*\to *)\times\on{id}$, the resulting map agrees
  with $\Psi_{\smash{\tilde\bP^p A}}$, which is a homology equivalence by
  \cref{lem:key}.  However, since $\caP(0)$ is contractible, $\tilde\bP^p*\to *$
  is an equivalence of cofibrant spaces, and since $\tilde\bO_\infty$ preserves
  such equivalences, the map to postcompose with is an equivalence. This shows
  that $\Phi_\infty$ is levelwise a homology equivalence.  Finally, as both
  sides are proper, we can invoke the spectral sequence for ‘thick’ geometric
  realisations (as in the proof of \cref{thm:A}) to conclude that $\Phi_\infty$
  itself is a homology equivalence.
\end{proof}

\begin{cor}\label{cor:OHStang}
  Let $\theta\colon L\to\on{BO}(2n)$ be a spherical tangential structure
  with $n$-connected $L$, and let $A$ be an $\dE[\theta]{2n}$-algebra. Then we
  have a weak equivalence of loop spaces\looseness-1
  \[\Omega\B\dW[\theta]{*,1}[A] \simeq \Z\times\Omega^\infty_0\on{MT}\theta
    \times \Omega\B(\dE\infty \otimes^\bbL_{\dE[\theta]{2n}} A).\]
\end{cor}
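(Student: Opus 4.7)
The plan is to chain \cref{prop:A}, \cref{prop:B}, and \cref{cor:MT}, and then to transport the remaining pushforward along the equivalence $\kappa$. First I would invoke \cref{prop:A} to identify $\dW[\theta]{*,1}[A]$ with the $\caW^\theta$-algebra $\caW^\theta\otimes^\bbL_{\dF[\theta]{2n}}\kappa^*A$, and record that under the spherical, $n$-connected hypothesis on $\theta$ the operad $\caW^\theta$ is an \acr{OHS} (via the map $F_1\to\caW^\theta$ together with \cref{cor:Wgood}(2)), that $\dF[\theta]{2n}$ is proper with $\dF[\theta]{2n}(0)\simeq *$ (from the corollary following \cref{lem:eqF2n}), and that the map $\dF[\theta]{2n}\to\caW^\theta$ lies over $E_0$ via the distinguished nullary operation $\mathbf{0}\in F_1(0)$ from \cref{constr:stab}.

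Given these verifications, \cref{prop:B} together with \cref{rmk:p0con} yields
\[\Omega\B\bigl(\caW^\theta\otimes^\bbL_{\dF[\theta]{2n}}\kappa^*A\bigr)\simeq \Omega\B\caW^\theta(0)\times\Omega\B\bigl(E_\infty\otimes^\bbL_{\dF[\theta]{2n}}\kappa^*A\bigr).\]
Then \cref{cor:MT} identifies the first factor as $\Z\times\Omega^\infty_0\MT\theta$, which supplies the tangential Thom piece of the desired splitting.

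It remains to rewrite the second factor in terms of $\dE[\theta]{2n}$. Here I would combine the composition-of-pushforwards equivalence from \cref{rem:freeRes} with the fact that the counit $\dE[\theta]{2n}\otimes^\bbL_{\dF[\theta]{2n}}\kappa^*A\to A$ is an equivalence of $\dE[\theta]{2n}$-algebras, which in turn follows from \cref{lem:Einfty}(2) applied to $\kappa$ viewed as an equivalence of $\frS$-free operads under $\dF[\theta]{2n}$. Together these give
\[E_\infty\otimes^\bbL_{\dF[\theta]{2n}}\kappa^*A \simeq E_\infty\otimes^\bbL_{\dE[\theta]{2n}}\bigl(\dE[\theta]{2n}\otimes^\bbL_{\dF[\theta]{2n}}\kappa^*A\bigr)\simeq E_\infty\otimes^\bbL_{\dE[\theta]{2n}}A,\]
completing the identification.

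I expect the main obstacle to be the bookkeeping around \cref{constr:Einf}: since $\dE[\theta]{2n}$ carries no preferred operad map to $E_\infty$, the pushforward $E_\infty\otimes^\bbL_{\dE[\theta]{2n}}({-})$ is only defined through the zig-zag $\dE[\theta]{2n}\leftarrow \dE[\theta]{2n}\times E_\infty\to E_\infty$, and each of the steps above must be executed after replacing $\dF[\theta]{2n}$ and $\dE[\theta]{2n}$ by their products with $E_\infty$. This should nonetheless be harmless, because each projection back remains an equivalence of proper operads, so \cref{lem:Einfty} and \cref{rem:freeRes} apply with the same conclusions, and the prescription of \cref{constr:Einf} ensures that the two resulting homotopy types agree with what we computed.
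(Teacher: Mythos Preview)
Your argument is correct and follows essentially the same route as the paper: invoke \cref{prop:A}, verify the \acr{OHS} hypotheses, apply \cref{prop:B} together with \cref{rmk:p0con}, and then \cref{cor:MT} for the first factor. The only difference lies in how you handle the final identification $E_\infty\otimes^\bbL_{\dF[\theta]{2n}}\kappa^*A\simeq E_\infty\otimes^\bbL_{\dE[\theta]{2n}}A$. The paper dispatches this in one line by observing that the zig-zag $\dE[\theta]{2n}\stackrel{\kappa}{\leftarrow}\dF[\theta]{2n}\stackrel{\pi_1}{\leftarrow}\dF[\theta]{2n}\times E_\infty\to E_\infty$ is precisely of the form required in \cref{constr:Einf}, so that $E_\infty\otimes^\bbL_{\dF[\theta]{2n}}\kappa^*A$ is \emph{by definition} a model for the homotopy type denoted $E_\infty\otimes^\bbL_{\dE[\theta]{2n}}A$. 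Your two-step route via the composition-of-pushforwards and the counit equivalence (which is indeed how the strategy section phrases it) is equally valid but forces you into the $({-})\times E_\infty$ bookkeeping you flag at the end; the paper's direct appeal to \cref{constr:Einf} absorbs that bookkeeping into the definition and so avoids having to redo it.
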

\begin{proof}
  As explained in \cref{constr:Wgstab}, the left side silently stands for the
  group completion of the pushforward
  $\caW^\theta\otimes^\bbL_{\dF[\theta]{2n}}\kappa^* A$.  By \cref{prop:B}, this
  splits into the group-completion of $\caW^\theta(0)$, which is
  $\Z\times\Omega^\infty_0\MT\theta$ by \cref{cor:MT}, and the group completion
  of $E_\infty\otimes^\bbL_{\dF[\theta]{2n}}\kappa^* A$. However, since
  $\dE[\theta]{2n}\stackrel{\kappa}{\leftarrow}
  \dF[\theta]{2n}\stackrel{\pi_1}{\leftarrow} \dF[\theta]{2n}\times E_\infty\to
  E_\infty$ is a zig-zag of operads as in \cref{constr:Einf}, it follows that
  $E_\infty\otimes^\bL_{\dF[\theta]{2n}}\kappa^*A$ is a model for the homotopy
  type of the $E_\infty$-algebra $E_\infty\otimes^\bL_{\dE[\theta]{2n}} A$.
\end{proof}

\section{Calculations and examples}
The considerations of \cref{sec:split} reduce the original problem to
understanding, for each tangential structure $\theta\colon L\to \on{BO}(d)$ and
each $\dE[\theta]d$-algebra $A$, the spectrum
$\B^\infty(\dE\infty\otimes^\bbL_{\dE[\theta]d} A)$. We will give a general, yet
rather abstract answer to this question in \cref{sec:eqBar}, but we first
discuss several special cases, which rediscover results by
\cite{Bonatto-2020,Bonatto}.

\subsection{Suboperads and submonoids}

\begin{rmk}\label{rmk:Forward}
  Let $\caH$ be any proper operad mapping to $\dE[\theta]{2n}$. By
  \cref{rem:freeRes}, we obtain, for each $\caH$-algebra $X$, an equivalence
  $E_\infty\otimes^\bbL_{\dE[\theta]{2n}} (\dE[\theta]{2n}\otimes^\bbL_\caH
  X)\simeq E_\infty\otimes^\bbL_\caH X$ of $E_\infty$-algebras.  This shows that
  if $\theta\colon L\to \on{BO}(2n)$ is a spherical tangential structure with
  $n$-connected $L$, then we have a weak equivalence of loop spaces
  \[\Omega\B \dW[\theta]{*,1}[\dE[\theta]{2n}\otimes^\bbL_\caH X] \simeq
    \Z\times\Omega^\infty_0\MT\theta \times \Omega\B(E_\infty\otimes^\bbL_\caH X).\]
\end{rmk}

Particularly easy examples of $\caH$ are operads that arise from topological
monoids:

\begin{defi}
  Each topological monoid $M$ constitutes an operad $M_+$ with $M$ as its monoid
  of unary operations, with a single nullary operation, and with no operations
  of higher arity. Then $M_+$ is a proper operad if and only if $M$ is
  well-pointed. Moreover, $M_+$-algebras are the same as based $M$-spaces. For a
  based $M$-space $X$, we define its \emph{based homotopy quotient} as
  $X_\h{M}\coloneqq E_0\otimes^\bbL_{\smash{M_+}} X$. In the case where the
  $M$-action on $X$ is trivial, we have an equivalence
  $X_\h{M}\simeq (\B M)_+\wedge X$\looseness-1
\end{defi}

If $M$ maps to the monoid $\dE[\theta]{2n}$, then we obtain an operad map
$M_+\to \dE[\theta]{2n}$ by taking the unique nullary of $M_+$ to the unique
nullary of $\dE[\theta]{2n}$.

\begin{prop}\label{prop:Forward}
  Let $M$ be a well-pointed topological monoid mapping to
  $\dE[\theta]{2n}(1)$. Then we have, for each based $M$-space $X$, a weak
  equivalence of loop spaces
  \[\Omega\B \dW[\theta]{*,1}[\dE[\theta]{2n}\otimes^\bbL_{M_+} X]\simeq
    \Z\times\Omega^\infty_0\MT\theta\times\Omega^\infty\Sigma^\infty(X_\h{M}).\]%
\end{prop}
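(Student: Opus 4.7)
The plan is to combine Remark \ref{rmk:Forward}, applied with $\caH = M_+$, with a transitivity argument for derived pushforwards and the classical Barratt--Quillen--May identification of the group-completion of a reduced free $E_\infty$-algebra.

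First I would verify that $M_+$ is a proper operad in the sense of \cref{defi:opd}: all operation spaces $M_+(r)$ for $r\ge 2$ are empty, while $M_+(0)=*$ and $M_+(1)=M$ carry trivially free $\frS_r$-actions, and the unit inclusion $\{\mathbf{1}\}\hookrightarrow M$ is a Hurewicz cofibration by the well-pointedness assumption on $M$. Since $M_+$ maps to $\dE[\theta]{2n}$ by hypothesis, Remark \ref{rmk:Forward} is applicable and yields
\[\Omega\B \dW[\theta]{*,1}[\dE[\theta]{2n}\otimes^\bbL_{M_+} X] \simeq \Z\times\Omega^\infty_0\MT\theta \times \Omega\B(E_\infty\otimes^\bbL_{M_+} X).\]
The task is thus reduced to identifying $\Omega\B(E_\infty\otimes^\bbL_{M_+} X)$ with $\Omega^\infty\Sigma^\infty(X_\h{M})$.

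Second, I would invoke transitivity of derived pushforwards, as recorded at the end of \cref{rem:freeRes}. The operad $M_+$ sits under $E_0$ via the canonical map sending the nullary of $M_+$ to the nullary of $E_0$ and all of $M$ to the unary identity; and $E_0$ is proper. We therefore get an equivalence of $E_\infty$-algebras
\[E_\infty \otimes^\bbL_{M_+} X \simeq E_\infty \otimes^\bbL_{E_0}\bigl(E_0 \otimes^\bbL_{M_+} X\bigr) \simeq E_\infty\otimes^\bbL_{E_0} X_\h{M},\]
where the last step is the very definition $X_\h{M} \coloneqq E_0 \otimes^\bbL_{M_+} X$ adopted in the paper. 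The right-hand side is, by construction, the reduced free $E_\infty$-algebra on the based space $X_\h{M}$.

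Third, I would apply the Barratt--Quillen--May theorem: the group-completion of the reduced free $E_\infty$-algebra on a well-pointed based space $Y$ is naturally equivalent to $\Omega^\infty\Sigma^\infty Y$. Specialising to $Y = X_\h{M}$ gives the desired equivalence, which combines with the first step to the statement of the proposition. The only nontrivial bookkeeping I foresee is ensuring the various well-pointedness hypotheses hold throughout: the bar-construction model of the derived pushforward from \cref{rmk:wellP} requires a well-pointed source algebra, and Barratt--Quillen--May requires a well-pointed space. Both can be arranged by first replacing $X$ by $M_+\otimes^\bbL_{M_+} X$ and then invoking \cref{lem:freeWell} to conclude that $X_\h{M}$ is well-pointed as an $E_0$-algebra. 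I do not expect any deeper obstacles beyond this.
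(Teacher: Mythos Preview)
Your proposal is correct and follows essentially the same route as the paper: reduce via \cref{rmk:Forward}, factor $M_+\to E_0\hookrightarrow E_\infty$ and apply transitivity (\cref{rem:freeRes}) to reach $E_\infty\otimes^\bbL_{E_0} X_\h{M}$, then finish with Barratt--Priddy--Segal together with \cref{lem:freeWell} for well-pointedness. The one point the paper makes explicit that you gloss over is the appeal to \cref{constr:Einf}, needed to ensure that the pushforward along your chosen composite $M_+\to E_0\hookrightarrow E_\infty$ agrees with the a priori meaning of $E_\infty\otimes^\bbL_{M_+} X$ appearing in \cref{rmk:Forward}.
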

\begin{proof}
  By \cref{rmk:Forward}, it suffices to show that
  $\Omega\B(E_\infty\otimes^\bbL_{M_+} X)\simeq \Omega^\infty\Sigma^\infty
  (X_\h{M})$. To this end, we note that by \cref{constr:Einf}, each operad map
  $M_+\to E_\infty$ can be used to calculate the homotopy type of the left hand
  side. One operad map is given by the composition of $M_+\to E_0$, taking each
  unary operation in $M$ to the identity, and the inclusion $E_0\to
  E_\infty$. By \cref{rem:freeRes}, we get an equivalence of
  $E_\infty$-algebras
  \[E_\infty\otimes^\bbL_{M_+} X\simeq E_\infty\otimes^\bbL_{E_0}(E_0\otimes^\bbL_{M_+} X) \simeq E_\infty\otimes^\bbL_{E_0} X_\h{M}.\]
  We then conclude the proof by invoking the equivalence
  $\smash{\Omega \B (E_\infty\otimes^\bbL_{E_0}({-}))\simeq\Omega^\infty\Sigma^\infty}$
  on well-pointed spaces from \cite{Barratt-Priddy,Segal-1973} and noting that
  $X_\h{M}=E_0\otimes_{M_+}^\bbL X$ is well-pointed by \cref{lem:freeWell}.
\end{proof}

Our main example for a submonoid of $\dE[\theta]d$ comes from the following observation:

\begin{lem}\label{lem:BG}
  If $\theta$ is of the form $\B G\to \B\on{O}(d)$ for a well-pointed subgroup
  $G\subseteq \on{O}(d)$, then there is a zig-zag of equivalences of well-pointed
  monoids $G\leftarrow \tilde G\hookrightarrow \dE[\theta]{d}(1)$.
\end{lem}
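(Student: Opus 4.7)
The plan is to realise $G$ inside $\dE[\theta]{d}(1)$ using Moore paths in a contractible principal $G$-bundle. Fix the model $L = \on{EO}(d)/G$, with right $G$-action coming from $G \hookrightarrow \on{O}(d)$, choose $e_0 \in \on{EO}(d)$ over the basepoint $b_0 = e_0 G \in L$, and let $\sigma \colon \on{EO}(d) \to \Fr^\theta(D^d)$ be the natural map sending $e$ to the framing that is constant at $(eG, e) \in \B G \times_{\B\on{O}(d)} \on{EO}(d) = \Fr(\theta^* V_d)$. We may assume $\sigma(e_0) = \ell_{0,1}$ by adjusting the parametrisation of $\theta^* V_d|_{b_0}$ if necessary. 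Define $\tilde G$ to consist of triples $(g, t, \gamma)$ with $g \in G$, $t \ge 0$, and $\gamma \colon [0, t] \to \on{EO}(d)$ a Moore path from $e_0$ to $e_0 g$, with monoid operation $(g_1, t_1, \gamma_1)(g_2, t_2, \gamma_2) := (g_1 g_2, t_1 + t_2, \gamma_2 * (\gamma_1 \cdot g_2))$, where $\gamma_1 \cdot g_2$ is the pointwise right translate by $g_2 \in G$. Well-pointedness of $\tilde G$ at its identity $(1, 0, c_{e_0})$ follows from that of $G$.

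The first map $\tilde G \to G$, $(g, t, \gamma) \mapsto g$, is a strict monoid homomorphism, and realises $\tilde G$ as the pullback of the contractible Moore path-space $P_{e_0} \on{EO}(d) \to \on{EO}(d)$ along $g \mapsto e_0 g$. Its fibres — Moore paths between two specified points of the contractible space $\on{EO}(d)$ — are therefore contractible, so $\tilde G \to G$ is a weak equivalence. The second map $\tilde G \hookrightarrow \dE[\theta]{d}(1)$ sends $(g, t, \gamma)$ to $(g|_{D^d}, t, \sigma \circ \gamma)$. It is tautologically injective, and is a strict monoid homomorphism by the $\on{O}(d)$-equivariance of $\sigma$: right multiplication by $g \in G$ on $\on{EO}(d)$ corresponds to precomposition of constant framings by $Tg|_{D^d}$, which is a direct linear-algebra check; this immediately gives $(\sigma \circ \gamma)(t) = \ell_{0,1} \circ Tg|_{D^d}$ and matches operadic composition in $\dE[\theta]{d}(1)$ with the translate-then-concatenate rule on $\tilde G$.

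To prove the inclusion is also a weak equivalence, compare both sides to the Moore loop space $\Omega^M_{b_0} L$. Sending $(g, t, \gamma) \mapsto \pi \circ \gamma$, where $\pi \colon \on{EO}(d) \to L$ is the quotient, gives a homeomorphism $\tilde G \cong \Omega^M_{b_0} L$: its inverse is unique path-lifting along the principal $G$-bundle $\on{EO}(d) \to L$, since every Moore loop at $b_0$ lifts from $e_0$ to a path whose endpoint lies in $e_0 G$ and thus specifies $g$. The analogous projection $\dE[\theta]{d}(1) \to \Omega^M_{b_0} L$, $(\alpha, t, \gamma) \mapsto \pi \circ \gamma$, is a weak equivalence by re-reading \cref{lem:unaries}: under the restriction equivalence $\on{O}(d) \xrightarrow{\simeq} \Emb(D^d, D^d)$, $\dE[\theta]{d}(1)$ identifies with the Moore-path homotopy fibre of $\on{O}(d) \to \Fr^\theta(D^d)$, which the same path-lifting argument then sends homeomorphically to $\Omega^M_{b_0} L$. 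The two projections agree on $\tilde G \hookrightarrow \dE[\theta]{d}(1)$ by construction, so two-out-of-three forces the inclusion to be a weak equivalence. The main obstacle is bookkeeping the $\on{O}(d)$-equivariance of $\sigma$ carefully enough to ensure strict compatibility between operadic composition and the translate-then-concatenate rule.
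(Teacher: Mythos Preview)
Your argument is correct and follows essentially the same route as the paper: define $\tilde G$ via Moore paths in a contractible $G$-space ending at $e_0 g$, show $\tilde G\to G$ is an equivalence because it is the Moore homotopy fibre of a map into a contractible space, and then verify that $\tilde G\hookrightarrow \dE[\theta]{d}(1)$ is an equivalence by comparing both with $\Omega L$. The paper phrases the last step as a morphism of Puppe sequences over $\B G$, whereas you use an explicit comparison to the Moore loop space $\Omega^M_{b_0}L$ and two-out-of-three; these are the same argument in slightly different clothing.

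One small overclaim: the map from the Moore homotopy fibre of $\on{O}(d)\to\Fr^\theta(D^d)$ to $\Omega^M_{b_0}L$ is only a weak equivalence, not a homeomorphism, since the fibre of $\Fr^\theta(D^d)\to L$ is $\Bun_0(TD^d,TD^d)$ rather than $\on{O}(d)$ on the nose, so unique path-lifting in $\Fr^\theta(D^d)$ need not land in the image of $\on{O}(d)$. This does not affect your conclusion: the map is still the connecting map of the fibre sequence $\Bun_0\to\Fr^\theta(D^d)\to L$ precomposed with the equivalence $\on{O}(d)\simeq\Bun_0$, hence a weak equivalence, and that is all two-out-of-three needs. (Also, in your formula $(\alpha,t,\gamma)\mapsto \pi\circ\gamma$ for the map out of $\dE[\theta]{d}(1)$, the symbol $\pi$ should be the evaluation $\Fr^\theta(D^d)\to L$ at the origin rather than the quotient $\on{EO}(d)\to L$; your compatibility check $\mathrm{ev}_0\circ\sigma=\pi$ then makes the triangle commute as you want.)
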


We point out that there are models for $\dE[\theta]{d}$, for which $G$ is even a
strict subgroup of the monoid of unaries. For the model we use, this is not the
case.
  
\begin{proof}
  We start by noting that $\theta^*V_d$ is given by the Borel construction
  $\E G\times_G \R^{d}$. In this description, there is an element $e_0\in \E G$
  such that under the standard identification $D^{d}\times\R^{d}\cong TD^{d}$,
  we have $\ell_{0,1}(z,v)=[e_0,v]$ up to a fixed linear automorphism
  of $\R^{d}$.

  Now we let
  $\tilde G\subseteq \dE[\theta]{d}(1)=\Emb^\theta( D^d,
  D^d)$ be the submonoid consisting of all $(\alpha,\gamma,t)$ where
  $\alpha\colon  D^{d}\hookrightarrow  D^{d}$ is of the form
  $\omega|_{\smash{ D^d}}$ for some $\omega\in G\subset\on{O}(d)$ and where $\gamma$ %
  is of the form $s\mapsto ((z,v)\mapsto [\beta(s),v])$ for some path
  $\beta \colon [0,t]\to \E G$ from $e_0$ to $e_0\cdot \omega$. Then the map
  $f\colon\tilde G\to G$ taking $(\omega|_{\smash{ D^d}},\gamma,t)$ to
  $\omega$ is a homomorphism.
  
  In slightly different words, $\tilde G$ is a Moore model for the homotopy
  fibre of the map $G\to \on{E}G$ taking $\omega$ to $e_0\cdot \omega$ and $f$ is the
  canonical map from the homotopy fibre to $G$.  Since $\E G$ is contractible,
  this shows that $f$ is an equivalence. Moreover, we employ the equivalence
  $\Emb( D^d, D^d)\simeq \Bun_0(T
  D^d,T D^d)$ from the proof of \cref{lem:unaries} to see that the
  inclusion $\tilde G\hookrightarrow \dE[\theta]{d}(1)$ fits into the following
  morphism of Puppe sequences
  \[
    \begin{tikzcd}[column sep=1.5em]
      \tilde G\ar[r]\ar[d,hook] & G\ar[r]\ar{d}{\omega\mapsto T\omega|_{\smash{ D^d}}}[swap]{\omega\mapsto \omega|_{\smash{ D^d}}} & \E G\ar[r]\ar{d}{e\mapsto ((z,v)\mapsto [e,v])} & \B G\ar[d,equal]\\
      \dE[\theta]{d}(1)\ar[r] & \Emb( D^{d}, D^{d})\simeq \Bun_0(T D^d,T D^d)\ar[r]
      & \Bun(T D^{d},\E G\times_G\R^{d})\ar[r] & \B G,
    \end{tikzcd}
  \]
  so since the right vertical map is an equivalence, first one is an equivalence as well.
\end{proof}

\begin{expl}\label{ex:BG}
  Let $G\subset \on{O}(2n)$ be an $(n-1)$-connected well-based subgroup such
  that the tangential structure $\theta\colon\B G\to\B\on{O}(2n)$ is spherical.
  If $X$ is a based $G$-space, then it is in particular a based $\tilde G$-space
  and the based homotopy quotients $X_\h{G}$ and $X_\h{\tilde G}$ are
  equivalent. Therefore, \cref{prop:Forward} provides us with an
  equivalence\looseness-1
  \[\Omega\B\dW[\theta]{*,1}[\dE[\theta]{2n}\otimes^\bbL_{\tilde G_+} X]
    \simeq\Z\times\Omega^\infty_0\MT\theta\times\Omega^\infty\Sigma^\infty(X_\h{G}).\]%
  In the case of $2n=2$ and $\theta$ being $\B\SO(2)\to \B\on{O}(2)$, this
  recovers \cite[Cor.\,\textsc{d}]{Bonatto}.
\end{expl}

\subsection{Partial algebras}

A slightly more rigid viewpoint, which appears in both \cite{Bonatto} and
\cite{Kupers-Miller}, starts with a \emph{partially defined}
$\dE[\theta]d$-algebra $X$, and instead of taking the relatively free
$\dE[\theta]d$-algebra along an operad map to $\dE[\theta]d$, we
\emph{complete} the $\dE[\theta]d$-algebra:

\begin{defi}
  Let $\caP$ be an operad. A \emph{partial $\caP$-algebra} is a space $X$,
  together with $\Comp (X)\subseteq \bbP X$ and a map
  $\lambda\colon \Comp (X)\to X$ satisfying the axioms of \cite[Ex.\,41]{Kupers-Miller}.\looseness-1

  Clearly, a partial $\caP$-algebra $X$ with $\Comp(X)=\bbP X$ is the same as a
  $\caP$-algebra in the usual sense, hereafter called \emph{honest}
  $\caP$-algebra in order to avoid confusion.
\end{defi}

\begin{constr}
  Let $X$ be a partial $\caP$-algebra. We define an inverse system
  \[\left(\begin{tikzcd}\dotsb\ar[r,"c_2"] & \Comp_1(X)\ar[r,"c_1"] &\Comp_0(X)\end{tikzcd}\hspace*{-4px}\right),\]
  with $c_1=\lambda$ and $\Comp_p(X)\subseteq\bP\Comp_{p-1}(X)$ being the
  subspace of all $x$ satisfying $(\bbP c_{p-1})(x)\in \Comp_{p-1}(X)$; then
  $c_p$ is the restriction of $\bP c_{p-1}$ to that subspace.

  If $S$ is a right $\bbP$-functor with values in spaces, then we obtain a
  simplicial space $S\Comp_\bullet(X)$, where the degeneracy maps
  $s_i\colon S\Comp_p(X)\to S\Comp_{p+1}(X)$ are induced by the unit of $\bbP$,
  the face maps $d_i\colon S\Comp_p(X)\to S\Comp_{p-1}(X)$ for $0\le i\le p-1$
  are induced by the natural transformations $\bbP^2\to \bbP$ and $S\bbP\to S$,
  and the last face map $d_p$ is given by $Sc_p$, compare
  \cite[Lem.\,4.2]{Kupers-Miller}.  If $X$ is an honest $\caP$-algebra, then the
  simplicial space $S\Comp_\bullet(X)$ agrees with the usual two-sided bar
  construction $\B_\bullet(S,\bbP,X)$.\looseness-1
\end{constr}

\begin{constr}\label{constr:Completion}
  For a partial $\caP$-algebra $X$, we define its \emph{completion}
  $\hat X\coloneqq |\bbP\Comp_\bullet(X)|$, which is a $\caP$-algebra by
  $\bbP|\bbP\Comp_\bullet(X)|\cong |\bbP^2\Comp_\bullet(X)|\to
  |\bbP\Comp_\bullet(X)|$.  If $X$ is an honest $\caP$-algebra, then
  $\bbP\Comp_\bullet(X)$ is the $\bbP$-free simplicial resolution of $X$ from
  \cref{rem:freeRes}, so its realisation $\hat X=\caP\otimes^\bbL_\caP X$ is
  equivalent to $X$.

  If $S$ is any right $\bbP$-functor, then there is an equivalence among
  geometric realisations
  $|\B_\bullet(S,\bbP,\hat X)|\simeq |S\Comp_\bullet(X)|$, which is established
  as in the proof of \cite[Lem.\,44]{Kupers-Miller} by looking at the
  bisimplicial space $\B_\bullet(S,\bbP,\bbP\Comp_\bullet(X))$.
\end{constr}

\begin{constr}
  Let $W$ be a $\theta$-framed manifold and let $X$ be a partial
  $\dE[\theta]d$-algebra. Then we put
  $\int^\theta_W X\coloneqq \int^\theta_W\hat X$ and
  $W^\theta[X]\coloneqq W^\theta[\hat X]$, i.e.\ we first complete and then
  apply the old definitions. We have an equivalence\looseness-1
  \[\textstyle \int^\theta_W X= |\B_\bullet(\dbE[\theta]W,\dbE[\theta]d,\hat X)|
    \simeq |\dbE[\theta]W(\Comp_\bullet X)|,\] and similarly for
  $W^\theta[X]$, showing that in the case where $X$ is already an honest
  $\dE[\theta]d$-algebra, the two compe\-ting definitions are equivalent.
\end{constr}

Let $d=2n$. If $\caH\subseteq \dE[\theta]{2n}$ is a proper suboperad, and $X$ is an
$\caH$-algebra, regarded as a partial $\dE[\theta]{2n}$-algebra, then
$\hat X\simeq\dE[\theta]{2n}\otimes^\bbL_\caH X$, so if
$\theta\colon L\to \on{BO}(2n)$ is spherical with $n$-connected $L$, we are in
the situation of \cref{rmk:Forward}.
This leads us to the desired reformulation of \cref{prop:Forward} in terms of
partial algebras:\looseness-1

\begin{thm}\label{thm:C}
  Let $\theta\colon L\to \on{BO}(2n)$ be a spherical tangential structure with
  $n$-connected $L$. If $M\subseteq \dE[\theta]{2n}(1)$ is a well-pointed
  submonoid and $X$ is a based $M$-space, regarded as a partial
  $\dE[\theta]{2n}$-algebra, then we have a weak equivalence of loop spaces
  \[\textstyle\Omega\B \dW[\theta]{*,1}[X]
    \simeq \Z\times\Omega^\infty_0\MT\theta\times\Omega^\infty\Sigma^\infty(X_\h{M}).\]
\end{thm}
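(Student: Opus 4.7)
The plan is to deduce \cref{thm:C} from \cref{prop:Forward} by recognising that the partial $\dE[\theta]{2n}$-algebra structure on $X$ comes from an \emph{honest} algebra structure over the suboperad of $\dE[\theta]{2n}$ generated by $M$. To this end, I would first upgrade the submonoid $M\subseteq \dE[\theta]{2n}(1)$ to a suboperad $M_+\hookrightarrow \dE[\theta]{2n}$ by taking $M_+(0)=*$ (mapping to the unique nullary of $\dE[\theta]{2n}$), $M_+(1)=M$, and $M_+(r)=\emptyset$ for $r\ge 2$. Since $\frS_0$ and $\frS_1$ are trivial and the higher arities are empty, $M_+$ is automatically $\frS$-free, and well-pointedness of $M$ forces $\{\mathbf{1}_{M_+}\}\hookrightarrow M$ to be a cofibration, so $M_+$ is proper in the sense of \cref{defi:opd}. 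An $M_+$-algebra is, by unwinding definitions, exactly a based $M$-space.

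Next, I would identify the partial $\dE[\theta]{2n}$-algebra structure on $X$ with the one induced from its honest $M_+$-algebra structure along $M_+\hookrightarrow \dE[\theta]{2n}$. Under this identification, the paragraph preceding the theorem (applied to $\caH = M_+$) provides an equivalence of (honest) $\dE[\theta]{2n}$-algebras
\[\hat X\simeq \dE[\theta]{2n}\otimes^\bbL_{M_+} X.\]
By the definition $W^\theta_{g,1}[X]\coloneqq W^\theta_{g,1}[\hat X]$ and the fact that $W^\theta_{g,1}[-]$ preserves equivalences (which follows from its description via the bar construction in \cref{constr:modDiff} and the fact that $\udbE[\theta]W$ preserves equivalences by $\frS$-freeness), we obtain a graded equivalence $\dW[\theta]{*,1}[X]\simeq \dW[\theta]{*,1}[\dE[\theta]{2n}\otimes^\bbL_{M_+} X]$, and hence an equivalence of their group-completions.

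Finally, I would apply \cref{prop:Forward} to the well-pointed monoid $M$ and the based $M$-space $X$, which yields precisely
\[\Omega\B \dW[\theta]{*,1}[\dE[\theta]{2n}\otimes^\bbL_{M_+} X]\simeq\Z\times\Omega^\infty_0\MT\theta\times\Omega^\infty\Sigma^\infty(X_\h{M}).\]
Chaining this with the previous equivalence gives the desired statement. The only genuine subtlety is the identification $\hat X\simeq \dE[\theta]{2n}\otimes^\bbL_{M_+} X$, which the paper has already reduced to the case treated in \cref{rmk:Forward}; beyond that, the argument is a direct application of \cref{prop:Forward}, so I do not anticipate any additional obstacles.
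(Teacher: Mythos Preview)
Your proposal is correct and follows exactly the paper's approach: the theorem is presented as a direct reformulation of \cref{prop:Forward}, using the observation immediately preceding it that for a proper suboperad $\caH\subseteq\dE[\theta]{2n}$ (here $\caH=M_+$), the completion of an $\caH$-algebra $X$ is $\dE[\theta]{2n}\otimes^\bbL_\caH X$. The paper does not supply any additional argument beyond what you have written.
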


\subsection{Configuration spaces and punctured diffeomorphism groups}
\label{sec:lab}

Two further calculations are based on a reinterpretation of some cases
of factorisation homology $\int^\theta_W A$ as labelled configuration spaces
$C(\mathring W;X)$ studied in \cite{Segal-1973,CFB-1987}. This
reinterpretation appears to be well-known, but as I could not find a reference,I
spelled out a proof. Let us first repeat the definition from
\cite[§\,1]{CFB-1987} in our language:\looseness-1

\begin{defi}
  Let $Q$ be a space. For each $r\ge 0$, let $\tilde C_r(Q)\subset Q^r$ be the
  space of ordered configurations of $r$ particles in $Q$. Then we have a right
  $\bE_0$-functor $\bC_Q$ that takes a space $Y$ to
  $\coprod_r \tilde C_r(Q)\times_{\frS_r}Y^r$. For every based space $X$, we
  define the \emph{labelled configuration space} $C(Q;X)$ as the (strict)
  coequaliser of the two maps $\bC_Q\bE_0(X)\rightrightarrows \bC_Q(X)$.  If
  $X$ is well-pointed, this coequaliser is equivalent to the bar construction
  $|\B_\bullet(\bC_Q,\bE_0,X)|$.
\end{defi}

\begin{prop}\label{prop:lab}
  Let $X$ be a well-pointed space, considered as a based $\dE[\theta]d(1)$-space
  with trivial action, and hence as a partial $\dE[\theta]d$-algebra. Then we
  have weak equivalences $\textstyle\int^\theta_W X \simeq C(\mathring W;X)$ and
  $W^\theta[X]\simeq (C(\mathring W;X)\times
  \Fr^\theta_\partial(W,\ell_W))\sslash \Diff_\partial(W)$ that are compatible
  with embeddings.
\end{prop}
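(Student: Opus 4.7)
The plan is to proceed in three steps: identify the completion $\hat X$, reduce the bar constructions for $\int^\theta_W X$ and $W^\theta[X]$, and compare with the labelled configuration space. First, I would argue that $\hat X \simeq \dE[\theta]d \otimes^\bbL_{E_0} X$ as $\dE[\theta]d$-algebras. Since $\dE[\theta]d(1)$ acts trivially on $X$ and $X$ is based, the partial $\dE[\theta]d$-structure on $X$ is entirely determined by the basepoint, i.e.\ by the underlying $E_0$-algebra structure. Concretely, the simplicial space $\Comp_\bullet X$ is naturally identified with the reduced two-sided bar construction built from the reduced monads $\tilde\bbE^\theta_d$ and $\tilde\bE_0$, both serving as $\tilde\bbE^\theta_d$-free resolutions of $X$ via \cref{rmk:wellP}.

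Second, combining this identification with the associativity of the derived pushforward from \cref{rem:freeRes}, I obtain
\[\textstyle\int^\theta_W X = |\B_\bullet(\dbE[\theta]W, \dbE[\theta]d, \hat X)| \simeq |\B_\bullet(\dbE[\theta]W, \bE_0, X)|,\]
and analogously $W^\theta[X] \simeq |\B_\bullet(\udbE[\theta]W, \bE_0, X)| \sslash \Diff_\partial(W)$ using the $\Diff_\partial(W)$-equivariant $\udbE[\theta]W$ from \cref{constr:modDiff}.

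Third, I would construct a map of right $\bE_0$-functors $\dbE[\theta]W \to \bC_{\mathring W}$ by sending a $\theta$-framed embedding $(\alpha, t, \gamma)$ to the configuration of centres $\{\alpha(i, 0)\}_{i=1}^r$. At each arity this factors as $\Emb^\theta(\ul r \times D^d, W) \to \Emb(\ul r \times D^d, W) \to \tilde C_r(\mathring W)$: the rightmost is the standard weak equivalence with contractible affine fibres (radii and rotations), while the leftmost has Moore path fibres inside $\Fr^\theta(\ul r \times D^d) \simeq L^r$ — contractible whenever non-empty, and non-empty because $L$ is path-connected by assumption. The composite is thus a levelwise weak equivalence of proper right $\bE_0$-functors, which passes to realisations. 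For $W^\theta[X]$, the analogous argument identifies $\udbE[\theta]W$ arity-wise with $\bC_{\mathring W} \times \Fr^\theta_\partial(W, \ell_W)$ — the framing factor is untouched by $\bE_0$ — and passing to $\Diff_\partial(W)$-quotients gives the moduli formula. Naturality of the whole construction in $\theta$-framed embeddings $W \hookrightarrow W'$ is automatic from the functoriality of each step.

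I expect the main obstacle to be Step 1: verifying precisely that $\Comp_\bullet X$, which is inductively defined through the partial structure, is equivalent to the reduced bar construction resolving $\dE[\theta]d \otimes^\bbL_{E_0} X$. This requires a careful inductive analysis of which elements of iterated applications of $\bbE^\theta_d$ are \enquote{composable} in the partial algebra sense, and showing that these exactly correspond to the elements expressible via the $E_0$-relations of the basepoint.
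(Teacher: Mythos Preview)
Your Step~1 and Step~3 contain related errors that do not cancel. The partial $\dE[\theta]d$-algebra structure on $X$ is that of an algebra over the suboperad $M_+\subset \dE[\theta]d$ with $M=\dE[\theta]d(1)$, so the completion is $\hat X\simeq \dE[\theta]d\otimes^\bbL_{M_+}X$, not $\dE[\theta]d\otimes^\bbL_{E_0}X$. Even though $M$ acts trivially on $X$, the iterated spaces $\Comp_p(X)$ remember all of $M_+$: for instance $\Comp_1(X)$ contains the full $M^r\times_{\frS_r}X^r$ in each arity, not just the basepoint relations. The two pushforwards differ unless $M\simeq *$, which by \cref{lem:unaries} means $\Omega L\simeq *$.

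Correspondingly, your Step~3 claim that $\Emb^\theta(\ul r\times D^d,W)\to \tilde C_r(\mathring W)$ is a weak equivalence is false. The fibre of $\Emb^\theta\to\Emb$ over a fixed $\alpha$ is the space of Moore paths in $\Fr^\theta(\ul r\times D^d)$ from $\ul r\times\ell_{0,1}$ to $\ell_W\circ T\alpha$: both endpoints are fixed, so this is a loop space, not a contractible path space. In fact the fibre of the composite over a configuration is equivalent to $M^r\simeq(\Omega L)^r$. The paper's proof repairs exactly this: it works with the bar construction $|\B_\bullet(\udbE[\theta]W,\bbM_+,X)|$ (not $\bbE_0$), and shows that the augmentation $|\B_\bullet(\udbE[\theta]W,\bbM_+,\bbE_0(-))|\to \bC_{\mathring W}\times\Fr^\theta_\partial(W,\ell_W)$ is an equivalence of right $\bbE_0$-functors by computing the simplicial fibre to be $\B_\bullet(\bbM,\bbM,*)^r$, whose realisation is contractible. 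The extra $\bbM$ in the bar construction is precisely what kills the $(\Omega L)^r$ you overlooked.
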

\begin{proof}
  We show the second equivalence, as the first one is proven analogously.
  Throughout the proof let $M$ be the monoid $\dE[\theta]d(1)$. We then have an
  equivalence
  \[W^\theta[X]=|\B_\bullet(\udbE[\theta]W,\dbE[\theta]d,\dE[\theta]d,\otimes^\bbL_{\tau
      \dE[\theta]d} X)|\sslash\Diff_\partial(W)
    \simeq|\B_\bullet(\udbE[\theta]W,\bbM_+,X)|\sslash\Diff_\partial(W).\]%
  We have fibrations
  $\ul\Emb^\theta(\ul{r}\times\mathring D^d,W)\to \tilde C_r(\mathring
  W)\times\Fr^\theta_\partial(W,\ell_W)$ with
  $(\ell,\alpha,\gamma)\mapsto(\alpha|_{\ul{r}\times \{0\}},\ell)$, inducing a
  transformation $\udbE[\theta]W\to \bC_W\times\Fr^\theta_\partial(W,\ell_W)$.
  We want to show that the $\Diff_\partial(W)$-equivariant augmentation
  $|\B_\bullet(\udbE[\theta]W,\bbM_+,\dbE0)|\to \bC_W\times
  \Fr^\theta_\partial(W,\ell_W)$ is an equivalence of right $\dbE0$-functors;
  then the claim follows from the chain of equivalence (all of which are
  compatible with postcomposing with $\theta$-framed embeddings)\looseness-1
  \begin{align*}
    \bigl(C(\mathring W;X)\times\Fr_\partial^\theta(W,\ell_W)\bigr)\sslash \Diff_\partial(W) &\simeq |\B_\bullet(\bC_W\times\Fr_\partial^\theta(W,\ell_W),\bE_0,X)|\sslash\Diff_\partial(W)
    \\&\simeq |\B_\bullet(\B_\bullet(\udbE[\theta]W,\bbM_+,\bE_0),\bE_0,X)|\sslash \Diff_\partial(W)
    \\&\simeq |\B_\bullet(\dbE[\theta]W,\bbM_+,X)|\sslash\Diff_\partial(W)
    \\&\simeq W^\theta[X].
  \end{align*}
  In order to reach this goal, we first note that for any trivial (unbased)
  $M$-space $Y$, the canonical map
  $\B_\bullet(\udbE[\theta]W,\bbM,Y)\to \B_\bullet(\udbE[\theta]W,\bbM_+,Y_+)$
  induces an equivalence on geometric realisations, so by 2-out-of-3, we can
  likewise show that for any such $Y$, the augmentation
  $f\colon \B_\bullet(\udbE[\theta]W,\bbM,Y)\to
  \bC_W(Y)\times\Fr^\theta_\partial(W,\ell_W)$ induces an equivalence on
  geometric realisations. Since all augmentation maps
  $\B_p(\udbE[\theta]W,\bbM,Y)\to \bC_W\times\Fr^\theta_\partial(W,\ell_W)$ are
  fibrations and all involved simplicial spaces are proper,
  \cite[Lem.\,2.14]{Ebert-RW} tells us that the homotopy fibre of $|f|$ is the
  geometric realisation of the actual simplicial fibre.\looseness-1

  To calculate this fibre, let
  $([w_1,\dotsc,w_r;y_1,\dotsc,y_r],\ell)\in (\tilde C_r(\mathring
  W)\times_{\frS_r} Y^r)\times\Fr^\theta_\partial(W,\ell_W)$ be any point. Since $Y$ is
  a trivial $M$-space, the actual fibre $F_\bullet$ is of the form
  \[F_p=\Emb^\theta_{w_1,\dotsc,w_r}(\ul{r}\times \mathring
    D^d,W)\times_{\frS_r}(M^p\times \{y_i\})^r_{i=1},\] where the
  $\theta$-framing $\ell$ of $W$ is taken into account and where
  $\Emb^\theta_{w_1,\dotsc,w_r}\subseteq \Emb^\theta$ is the subspace of those
  $\theta$-framed embeddings $(\alpha,\gamma)$ with $\smash{\alpha(i,0)=w_{\sigma(i)}}$
  for some $\sigma\in\frS_r$. We fix a $\theta$-framed embedding
  $(\bar\alpha,\bar\gamma)\colon \ul{r}\times \mathring D^d\hookrightarrow W$ with
  $\bar\alpha(i,0)=w_i$; then the map
  \begin{align*}
    \Emb^\theta_0(\mathring D^d,\mathring D^d)^r\times \frS_r&\to \Emb^\theta_{w_1,\dotsc,w_r}(\ul{r}\times\mathring D^d,W),\\
    (\alpha_1,\gamma_1,\dotsc,\alpha_r,\gamma_r;\sigma)&\mapsto (\bar\alpha,\bar\gamma)\circ ((\alpha_1,\gamma_r)\sqcup\dotsb\sqcup(\alpha_r,\gamma_r))\circ (\sigma\times\on{id}_{\smash{\mathring D^d}})
  \end{align*}
  is a deformation retract (the deformation given by shrinking the radii of the
  discs). The left hand side, in turn, is equivalent to $M^r\times\frS_r$
  itself.  Altogether, we have established a simplicial equivalence between the
  actual fibre $F_\bullet$ and the simplicial space $\B_\bullet(\bbM,\bbM,*)^r$,
  which is contractible as desired.
\end{proof}

\begin{expl}\label{ex:punct}
  Let $X$ be the based space $S^0$, considered as a trivial $\dE[\theta]{2n}(1)$-space,
  and hence as a partial $\dE[\theta]{2n}(1)$-algebra. By \cref{prop:lab}, we have an equivalence
  \[W^\theta[X]\simeq \coprod_{r\ge 0}(C_r(\mathring W)\times
    \Fr^\theta_\partial(W,\ell_W))\sslash\Diff_\partial(W),\]%
  where $C_r(\mathring W)$ is the \emph{unordered} configuration space of $r$
  particles inside $\mathring W$. Moreover, we have an action of
  $\Diff_\partial(W)$ on each $C_r(\mathring W)$ and the evaluation
  $\Diff_\partial(W)\to C_r(\mathring W)$ at a given configuration is a
  fibration, with the stabiliser being the subgroup $\Diff^r_\partial(W)$ of
  diffeomorphisms that fix a subset of cardinality $r$, allowing
  permutations. Therefore, the canonical map
  $*\sslash \Diff^r_\partial(W)\to C_r(\mathring W)\sslash
  \Diff_\partial(W)$ is a weak equivalence, so by a five lemma argument, we obtain
  an equivalence
  \[W^\theta[X]\simeq \coprod_{r\ge
      0}\Fr^\theta_\partial(W,\ell_W)\sslash\Diff^r_\partial(W).\] These are the
  punctured moduli spaces $\caM^{\theta,r}_\partial(W,\ell_W)$ considered in
  \cite{Bonatto-2020}. Using that $*_\h{\dE[\theta]{2n}(1)}\simeq (\B\dE[\theta]{2n}(1))_+\simeq L_+$, \cref{thm:C}
  provides us with an equivalence
  \[\Omega\B \coprod_{g,r\ge 0}\caM^{\theta,r}_\partial(W_{g,1},\ell_{g,1})\simeq \Z\times\Omega^\infty_0\MT\theta\times
    \Omega^\infty\Sigma^\infty_+L,\]%
  By noting that $\Omega^\infty\Sigma^\infty_+L$ is the group-completion of the
  labelled configuration space
  $C(\R^\infty;L_+)\simeq \coprod_{r\ge 0}L^r\sslash \frS_r$, we obtain an
  equivalence among Quillen plus-constructions
  \[\on{colim}_{g,r\to \infty} (\caM_\partial^{\theta,r}(W_{g,1},\ell_{g,1}))^+ \simeq \bigl(\on{colim}_{g\to \infty} \caM_\partial^\theta(W_{g,1},\ell_{g,1})\bigr)^+\times \bigl(\on{colim}_{r\to \infty}L^r\sslash \frS_r\bigr)^+.\]
  This is an instance of \cite[Thm.\,\textsc{a}]{Bonatto-2020}. We point out,
  however, that Bonatto’s result is stronger: In her case, one gets, very
  much as in \cite{CFB-Tillmann-2001}, a decoupling result for a fixed, finite
  number of punctures.
\end{expl}

\begin{expl}
  Recall the fibre sequence
  $\int_W^\theta A\to W^\theta[A]\to \caM^\theta_\partial(W,\ell_W)$ from
  \cref{prop:hofib}. In the case where $W=W_{g,1}$, we have stabilisation maps
  for all three terms in this sequence, and as they are compatible with each
  other, we reach a fibre sequence
  \[\textstyle \on{colim}_{g\to \infty} \int^\theta_{W_{g,1}}A\to
    \on{colim}_{g\to\infty}W^\theta_{g,1}[A]\to
    \on{colim}_{g\to\infty}\caM^\theta_\partial(W_{g,1},\ell_{g,1}).\]%
  In the case where $A$ is path-connected, the group-completion theorem and the main result
  of this paper provides a square
  \[
    \begin{tikzcd}
      \on{colim}_{g\to\infty}W^\theta_{g,1}[A]\ar[r]\ar[d] &
      \on{colim}_{g\to\infty}\caM^\theta_\partial(W_{g,1},\ell_{g,1})\ar[d]\\
      \Omega_0^\infty\MT\theta\times\Omega\B(E_\infty\otimes^\bbL_{\dE[\theta]{2n}}A)\ar[r,swap,"\pr"] &
      \Omega_0^\infty\MT\theta,
    \end{tikzcd}
  \]
  where the vertical maps are acyclic. One might be tempted to hope that the
  induced map on homotopy fibres
  \begin{align}\label{eq:fib}
    \textstyle\on{colim}_{g\to\infty}\int^\theta_{W_{g,1}}A\to
    \Omega\B(E_\infty\otimes^\bbL_{\dE[\theta]{2n}}A)
  \end{align}
  is an equivalence as well. Here is an example showing that this is not
  the case: Let $2n=2$ and consider the tangential structure
  $\theta\colon \B\on{SO}(2)\to \BO(2)$ of orientations. Moreover, consider the
  based space $X=S^3$, together with the trivial $\on{SO}(2)$-action. Under the
  identifications from \cref{prop:lab} and \cref{prop:Forward}, the map from
  \cref{eq:fib} is of the form
  \[\on{colim}_{g\to \infty} C(\mathring W_{g,1};S^3)\longrightarrow
    \Omega^\infty\Sigma^\infty(S^3\wedge\B\on{SO}(2)_+).\]%
  We want to show that this map does not induce an isomorphism in
  $H_5({-};\bF_2)$, and hence cannot be acyclic: On the one hand, we see that
  the right hand side is the free $E_\infty$-algebra over the based space
  $S^3\wedge \on{BSO}(2)_+\simeq \Sigma^3\CP^\infty_+$, so by
  \cite[Thm.\,\textsc{i}.4.1]{Cohen-Lada-May}, its rational homology is the free
  Dyer–Lashof algebra over based graded $\bF_2$-vector space
  $H_*(\Sigma^3\CP^\infty_+;\bF_2)$, which, in turn, is non-trivial in every odd
  degree. This shows that
  $H_5(\Omega^\infty\Sigma^\infty(S^3\wedge\B\on{SO}(2)_+);\bF_2)$ is non-trivial.
  Since homology commutes with filtered colimits, it suffices to show that
  $H_5(C(\mathring W_{g,1};S^3);\bF_2)$ is trivial for each $g\ge 0$. To this end,
  we note that the stable splitting from \cite[Prop.\,3]{CFB-1987} and the Thom
  isomorphism provides us with an isomorphism of graded $\bF_2$-vector spaces
  \[\tilde H_*(C(\mathring W_{g,1};S^3);\bF_2)\cong \bigoplus_{r\ge
      0}H_{*-3r}(C_r(\mathring W_{g,1});\bF_2).\]%
  Since $C_r(\mathring W_{g.1})$ is a non-compact $2r$-dimensional manifold, the
  shifted homology groups $H_{*-3r}(C_r(\mathring W_{g,1});\bF_2)$ are
  concentrated in degrees $3r\le *<5r$. Hence the claim follows from the fact
  that there is no integer $r\ge 0$ with $3r\le 5<5r$.
\end{expl}

\begin{rmk}
  In higher dimensions, the question whether the map from \cref{eq:fib} is
  acyclic or not is equivalent to the question whether the maximal perfect subgroup of
  $\on{colim}_{g\to \infty}\pi_1(\caM^\theta_\partial(W_{g,1},\ell_{g,1}))$ acts
  trivially on the fibre
  $\on{colim}_{g\to\infty}\int^{\smash\theta}_{\smash{W_{g.1}}}A$ or not.
  However, I am not aware of any work that has dealt with this question.
\end{rmk}

\label{sec:smaller}

\section{Relatively free \texorpdfstring{$\bm{E}_{\bm\infty}$}{E∞}-algebras}
In this section, we develop a general approach to the remaining question of the
homotopy type of $\B^\infty(\dE\infty\otimes^\bbL_{\dE[\theta]d} A)$ for a given
$\dE[\theta]d$-algebra $A$. By doing so, we assume familiarity with the language
of $\infty$-categories and $\infty$-operads as in \cite{Lurie-2009,Lurie}. Note
that the question we want to address is only about the operads $\dE[\theta]d$
and $E_\infty$ and has nothing to do with diffeomorphisms of manifolds any more.

\begin{constr}\label{rmk:bar}
  Recall from \cref{ex:unfr} the operad map $E_d\to \dE[\theta]d$. If $A$ is an
  $\dE[\theta]d$-algebra, then we denote by $U A$ its underlying $E_d$-algebra.
  For any $E_d$-algebra $\frA$, we consider the classical \emph{iterated bar
    construction} $\B^d \frA\coloneqq |\B_\bullet(\Sigma^d,\tilde \bbE_d,\frA)|$ as in
  \cite{May-1972}, which is a $(d-1)$-connected based space.\looseness-1
\end{constr}

\begin{rmk}
  Let $\caC$ be an $\infty$-category, let $L$ be a connected space and let
  $X\colon L\to \caC$ be a diagram.  Since $L$ is connected, the homotopy type
  (i.e.\ the equivalence class in $\caC$) of any $X(b_0)$ is independent of
  $b_0\in L$, and we say that $X$ \emph{takes value} $X(b_0)$.

  For each choice of basepoint $b_0\in L$, the diagram $X$ may be re-expressed
  in a slightly more classical language by the object $X(b_0)$ in $\caC$,
  together with an $E_1$-action by the loop space
  $\Omega L=\Omega_{\smash{b_0}}L$ on $X(b_0)$, and the colimit $\colim_L(X)$
  agrees with the (homotopy) quotient $X(b_0)_\h{\Omega L}$ if either side
  (hence both sides) exist(s) in $\caC$.
\end{rmk}

We point out that in the case where $L$ is part of a tangential structure
$\theta\colon L\to \on{BO}(d)$, then a canonical choice for a basepoint of $L$
would be the one coming from the bundle map
$\ell_{0,1}\colon T\mathring D^d\to \theta^*V_d$, whose map among base spaces factors
through a single point. The aim of this section is the following:

\begin{prop}\label{prop:C}
  Let $\theta\colon L\to\B\on{O}(d)$ be a tangential structure with connected
  $L$ and let $A$ be an $\dE[\theta]d$-algebra. Then the shifted suspension
  spectrum $\Sigma^{\infty-d}\B^d UA$ carries an $E_1$-action by the loop space
  $\Omega L$ and we have an equivalence of connective spectra
  \[\B^\infty(E_\infty\otimes^\bbL_{\dE[\theta]d} A)\simeq
  \bigl(\Sigma^{\infty-d}\B^d UA\bigr)_\h{\Omega L}.\]
\end{prop}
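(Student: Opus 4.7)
The strategy is to identify $\dE[\theta]d$, up to equivalence of $\infty$-operads, with the semidirect-product $E_d\rtimes\Omega L$, where $\Omega L$ acts on $E_d$ through the canonical map $\gamma\colon \Omega L\to \Omega\B\on{O}(d)\simeq \on{O}(d)$ and the standard $\on{O}(d)$-action on $E_d$ by rotating discs. On the level of unary operations this identification is \cref{lem:unaries}, and an entirely parallel analysis at arbitrary arity $r$ produces a natural equivalence $\dE[\theta]d(r) \simeq E_d(r) \times (\Omega L)^r$ compatible with operadic composition, placing the statement into the framework of \cite{Lurie}.

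Given this identification, an $\dE[\theta]d$-algebra is the same as an $E_d$-algebra in the $\infty$-category $\caS^{\B\Omega L}$ of spaces with an $\Omega L$-action, and the forgetful functor to $E_d$-algebras factors as $\Alg_{\dE[\theta]d}(\caS)\simeq \Alg_{E_d}(\caS^{\B\Omega L})\to \Alg_{E_d}(\caS)$. Passing to left adjoints and then to the pushforward to $E_\infty$-algebras, we obtain a natural decomposition
\[E_\infty\otimes^\bbL_{\dE[\theta]d}A \;\simeq\; \bigl(E_\infty\otimes^\bbL_{E_d} UA\bigr)_\h{\Omega L}\]
in $\Alg_{E_\infty}(\caS)$, the homotopy quotient being taken with respect to the $\Omega L$-action induced functorially from the one on $UA$.

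Next, I invoke the classical equivalence of connective spectra $\B^\infty(E_\infty\otimes^\bbL_{E_d} B) \simeq \Sigma^{\infty-d}\B^d B$ for any $E_d$-algebra $B$. On a free $E_d$-algebra $B=\bbE_d X$ with connected based $X$, this comes down to the identification $\B^d\bbE_d X\simeq \Sigma^d X$ from May's approximation theorem, giving $\Sigma^{\infty-d}\B^d\bbE_d X\simeq \Sigma^\infty X$; on the other side, $E_\infty\otimes^\bbL_{E_d}\bbE_d X \simeq \bbE_\infty X$, whose associated spectrum is again $\Sigma^\infty X$. The general case follows by a sifted-colimit argument, presenting $B$ as the geometric realisation of its bar resolution, since $\B^d$, $\Sigma^\infty$ and the relevant pushforwards are all left adjoints and so preserve sifted colimits. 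This construction is functorial in $E_d$-algebras internal to $\caS^{\B\Omega L}$, endowing $\Sigma^{\infty-d}\B^d UA$ with a genuine $\Omega L$-action.

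Finally, combining the two steps and using that $\B^\infty\colon \Alg_{E_\infty}(\caS)\to \Sp_{\ge 0}$ preserves colimits (and in particular $\Omega L$-homotopy quotients), we arrive at
\[\B^\infty\bigl(E_\infty\otimes^\bbL_{\dE[\theta]d}A\bigr) \simeq \bigl(\B^\infty(E_\infty\otimes^\bbL_{E_d}UA)\bigr)_\h{\Omega L} \simeq \bigl(\Sigma^{\infty-d}\B^d UA\bigr)_\h{\Omega L}.\]
The main obstacle is the very first step: making the semidirect-product identification $\dE[\theta]d\simeq E_d\rtimes \Omega L$ fully rigorous as an equivalence of $\infty$-operads, since the point-set model of $\dE[\theta]d$ constructed in \cref{constr:embth} is not literally a semidirect product, and the coherence data relating the Moore-path description of framings to the $\on{O}(d)$-twist requires care.
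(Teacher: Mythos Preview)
Your proposal is correct and follows essentially the same strategy as the paper, but the execution differs in a few instructive ways.

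Where you assert the semidirect-product identification $\dE[\theta]d\simeq E_d\rtimes\Omega L$ and flag it as the main obstacle, the paper resolves this precisely by citing \cite[Prop.\,2.2]{Horel-Krannich-Kupers}, which supplies a diagram $\Theta\colon L\to\Op_\infty$ taking value $E_d$ with colimit $\dE[\theta]d$; this is the $\infty$-operadic incarnation of your semidirect product, and it makes the inclusion $E_d\to\dE[\theta]d$ the structure map $\Theta(b_0)\to\colim_L\Theta$ on the nose. Rather than passing directly to $E_d$-algebras in $\caS^{\B\Omega L}$, the paper inserts the co-cartesian operad $L^\sqcup$ as an intermediate stage: it factors $\dE[\theta]d\to E_\infty$ through $L^\sqcup$ (using that $L^\sqcup\simeq\colim_L(\Delta_L E_\infty)$), invokes the equivalence $\Alg_{L^\sqcup}\simeq\Fun(L,\Alg_{E_\infty})$, and then observes that pushforward along $L^\sqcup\to E_\infty$ is exactly the $L$-indexed colimit in $\Alg_{E_\infty}$. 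This makes transparent the point that your homotopy quotient $(\,\cdot\,)_{\h{\Omega L}}$ must be taken inside $\Alg_{E_\infty}$ rather than in spaces, something your write-up leaves implicit in step~3. Finally, for the identification $\B^\infty(E_\infty\otimes^\bbL_{E_d}\frA)\simeq\Sigma^{\infty-d}\B^d\frA$, you argue via free algebras and sifted colimits, whereas the paper gives a short Yoneda argument matching the adjunctions $(\Sigma^{\infty-d},\Omega^{\infty-d})$ and $(\B^d,\Omega^d)$; both are valid, the paper's being slightly slicker.

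In short, the paper fills exactly the gap you identified and organises the remaining steps through $L^\sqcup$; your outline is otherwise on target.
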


\begin{expl}
  If $L$ is contractible (i.e.\ $\dE[\theta]d\simeq E_d$), then
  $\B^\infty(E_\infty\otimes^\bbL_{\dE d} A)\simeq \Sigma^{\infty-d}\B^d A$.
\end{expl}

\begin{expl}
  If $\theta$ is of the form $\B G\to \B\on{O}(d)$ for some homomorphism
  $G\to\on{O}(d)$, then \cref{prop:C} gives rise to a naïve $G$-action on
  $\Sigma^{\infty-d}\B^d UA$, and we have an equivalence
  $\B^\infty(E_\infty\otimes^\bbL_{\dE[\theta]d} A)\simeq
  \bigl(\Sigma^{\infty-d}\B^d UA\bigr)_\h{G}$.
\end{expl}

Altogether, \cref{prop:C} then implies our main theorem:

\begin{thm}\label{thm:B}
  Let $\theta\colon L\to \B\on{O}(2n)$ be a spherical tangential structure with
  $n$-connected $L$, and let $A$ be an $\dE[\theta]{2n}$-algebra. Then there is
  an $A_\infty$-action of $\Omega L$ on the spectrum
  $\Sigma^{\infty-2n}\B^{2n} UA$ and we have a weak
  equivalence of loop spaces\looseness-3
  \[\textstyle \Omega\B\dW[\theta]{*,1}[A] \simeq
    \Z\times\Omega^\infty_0\MT \theta\times \Omega^\infty
    \bigl(\bigl(\Sigma^{\infty-2n}\B^{2n} U A\bigr)_\h{\Omega L}\bigr).\]
\end{thm}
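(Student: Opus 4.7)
The plan is to assemble \cref{thm:B} from the two main inputs that have already been isolated in the paper, namely \cref{cor:OHStang} and \cref{prop:C}. The former handles the geometry/topology of the moduli spaces and extracts the tangential Thom spectrum factor, while the latter is purely algebraic and computes the remaining $E_\infty$-factor in terms of iterated bar constructions. The goal is simply to chain them together, with a little bookkeeping about the $\Omega L$-action.

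First, I would invoke \cref{cor:OHStang}, which, under the assumed hypotheses on $\theta$ (spherical, $L$ is $n$-connected; the latter ensures that $\caW^\theta$ is an operad with homological stability), gives a splitting
\[
  \Omega\B\dW[\theta]{*,1}[A] \simeq \Z\times \Omega_0^\infty\MT\theta
  \times \Omega\B\bigl(E_\infty\otimes^\bbL_{\dE[\theta]{2n}} A\bigr).
\]
It then remains to rewrite the last factor in spectrum-theoretic terms. Since $E_\infty\otimes^\bbL_{\dE[\theta]{2n}} A$ is a (grouplike-after-completion) $E_\infty$-algebra, its bar-construction group-completion $\Omega\B(-)$ coincides with $\Omega^\infty\B^\infty(-)$, where $\B^\infty$ denotes the connective-spectrum delooping of an $E_\infty$-algebra. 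Applying \cref{prop:C} (with $d=2n$) then identifies this connective spectrum with the $\Omega L$-homotopy orbits of the shifted iterated bar construction:
\[
  \B^\infty\bigl(E_\infty\otimes^\bbL_{\dE[\theta]{2n}} A\bigr)
  \simeq \bigl(\Sigma^{\infty-2n}\B^{2n}UA\bigr)_\h{\Omega L}.
\]
Taking $\Omega^\infty$ and plugging back into the splitting yields precisely the stated weak equivalence; the $A_\infty$-action of $\Omega L$ on $\Sigma^{\infty-2n}\B^{2n}UA$ mentioned in the theorem is the one provided by \cref{prop:C} via the canonical basepoint of $L$ coming from $\ell_{0,1}$.

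There is essentially no obstacle here beyond making the two inputs fit: one only has to check that the passage $\Omega\B\simeq \Omega^\infty\B^\infty$ is applicable, which is standard for $E_\infty$-algebras in spaces, and that the constructions producing $\dE\infty\otimes^\bbL_{\dE[\theta]{2n}} A$ in \cref{cor:OHStang} and in \cref{prop:C} agree up to equivalence of $E_\infty$-algebras — this is guaranteed by \cref{constr:Einf} and the zig-zag $\dE[\theta]{2n}\xleftarrow{\kappa}\dF[\theta]{2n}\leftarrow \dF[\theta]{2n}\times E_\infty\to E_\infty$ used in the proof of \cref{cor:OHStang}. All the real work — the splitting from the operad-with-homological-stability machinery and the identification of the relatively free $E_\infty$-algebra — has been done in the preceding two sections, so \cref{thm:B} follows as a formal consequence.
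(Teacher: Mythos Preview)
Your proposal is correct and follows exactly the paper's approach: the paper simply states that \cref{prop:C} implies \cref{thm:B}, and you have spelled out precisely how this combination with \cref{cor:OHStang} works, including the passage $\Omega\B\simeq\Omega^\infty\B^\infty$ for $E_\infty$-algebras that is left implicit in the paper.
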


In combination with \cref{thm:A}, we finally have achieved the
following homological computation:

\begin{cor}\label{cor:A}
  Let $\theta\colon L\to\on{BO}(2n)$ be a spherical tangential structure with
  $n$-connected $L$. If $2n\ge 6$ (or $2n=2$ and $\theta$ is admissible to
  \cite[Thm.\,7.1]{RW-2016}), we have, for each path-connected
  $\dE[\theta]{2n}$-algebra $A$ and each $g\ge 0$, isomorphisms
  \[H_i\bigl(\dW[\theta]{g,1}[A]\bigr) \cong
    H_i\bigl(\Omega_0^\infty\on{MT}\theta\times
    \Omega^\infty\bigl(\bigl(\Sigma^{\infty-2n}\B^{2n} U A\bigr)_\h{\Omega
      L}\bigr)\bigr)\]%
  for every $i$ small enough compared to $g$ to satisfy the conditions of \cref{thm:A}.
\end{cor}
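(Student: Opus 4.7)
The plan is to deduce the claim by directly combining \cref{thm:A} and \cref{thm:B} with a group-completion argument that collapses the grading by genus. First, I would observe that the $n$-connectivity assumption on $L$ forces $\theta$ to be $\pi_1$-injective (in fact, the fibre of $\theta$ is $(n-1)$-connected), so the hypotheses of \cref{thm:A} are met. Consequently the stabilisation maps $W^\theta_{g,1}[A]\to W^\theta_{g+1,1}[A]$ induce isomorphisms on $H_i(-;\Z)$ in the stated range, and therefore so do the canonical maps $W^\theta_{g,1}[A]\to \on{hocolim}_{g\to\infty}W^\theta_{g,1}[A]$.

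Next, since $A$ is path-connected, \cref{lem:Actd} ensures that every $W^\theta_{g,1}[A]$ is path-connected. This puts us exactly in the situation of \cref{constr:Wgstab}, where the group-completion theorem identifies
\[\on{hocolim}_{g\to\infty}W^\theta_{g,1}[A]^+\simeq \Omega_0\B\dW[\theta]{*,1}[A].\]
As the Quillen plus-construction is a homology isomorphism, combining it with the previous step gives $H_i(W^\theta_{g,1}[A])\cong H_i(\Omega_0\B\dW[\theta]{*,1}[A])$ in the range of \cref{thm:A}.

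Finally, I would invoke \cref{thm:B} and pass to the identity component: the $\Z$-factor there simply enumerates the $\pi_0$ of the group-completion (i.e.\ the genus grading), and restricting to the component of the basepoint yields
\[\Omega_0\B\dW[\theta]{*,1}[A]\simeq \Omega^\infty_0\MT\theta\times\Omega^\infty\bigl(\bigl(\Sigma^{\infty-2n}\B^{2n}UA\bigr)_\h{\Omega L}\bigr),\]
since the infinite loop space on the right is already connected. Chaining the three identifications produces the desired isomorphism.

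I do not expect any genuine obstacle: the argument is purely formal assembly of earlier results. The only points one has to be a little careful about are (i) confirming that $n$-connectivity of $L$ really gives $\pi_1$-injectivity of $\theta$ so that \cref{thm:A} is available, and (ii) accounting correctly for path-components, so that the $\Z$-factor from the group-completion cancels against the implicit choice of component in $W^\theta_{g,1}[A]$. Both are routine once the setup is in place.
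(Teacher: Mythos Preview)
Your proposal is correct and is exactly the assembly the paper intends; the paper itself offers no proof beyond the sentence ``In combination with \cref{thm:A}'' preceding the statement. One small correction: your parenthetical that ``the fibre of $\theta$ is $(n-1)$-connected'' is false---from the long exact sequence of $\theta$ one finds $\pi_0$ of the fibre is $\pi_1(\B\on{O}(2n))\cong\Z/2$---but this is irrelevant, since $n$-connectivity of $L$ already gives $\pi_1(L)=0$, which trivially yields $\pi_1$-injectivity of $\theta$.
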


In order to prove \cref{prop:C}, we have to reformulate parts of our operadic
framework in $\infty$-categorical terms:

\begin{nota}  
  For each $\infty$-category $\caC$ and objects $X,Y$, we denote the
  corresponding mapping space by $\Map_\caC(X,Y)$. The (large) $\infty$-category
  of $\infty$-categories is denoted by $\Cat_\infty$. For $\infty$-categories
  $\caC$ and $\caD$, we denote the $\infty$-category of functors $\caC\to\caD$
  by $\Fun(\caC,\caD)$.  We call the $\infty$-category of spaces $\caS$, the
  $\infty$-category of $d$-connective (i.e.\ $(d-1)$-connected) based spaces
  $\caS_*^{\ge d}$, and the $\infty$-category of connective spectra $\Sp^\conn$.
  
  The $\infty$-category of (coloured) $\infty$-operads is denoted by
  $\Op_\infty$.  Recall that $\infty$-operads are modelled in
  \cite[§\,2.1]{Lurie} as functors $\caO\to \Fin_*$ to the skeletal category of
  finite pointed sets satisfying certain conditions \cite[{}2.1.1.10]{Lurie}.  A
  map of $\infty$-operads $\caP\to\caO$ is a functor of categories over $\Fin_*$
  satisfying a further condition \cite[{}2.1.2.7]{Lurie}. Let
  $\Op_\infty(\caP,\caO)$ be the full subca\-tegory of
  $\Fun_{\Fin_*}(\caP,\caO)$ consisting of $\infty$-operad maps.
  We define the $\infty$-category $\Alg_\caO=\Alg_\caO(\caS)$ of $\caO$-algebras
  (in spaces) to be $\Op_\infty(\caO,\caS)$.\looseness-1
\end{nota}

\begin{rmk}
  Each topological operad is implicitly regarded as an $\infty$-operad through
  its operadic nerve \cite[{}2.1.1.23]{Lurie}.  For a $\frS$-free topological
  operad $\caO$, the $\infty$-category $\Alg_\caO$ of algebras over the operadic
  nerve of $\caO$ is equivalent to the coherent nerve of the simplicial model
  category of $\caO$-algebras in the sense of \cite{Berger-Moerdijk-2003}, see
  \cite[Thm.\,4.2.2]{Hinich-Moerdijk}, which is an instance of
  \cite[Thm.\,7.11]{Pavlov-Scholbach}.

  It then follows from \cite[{}5.2.4.6+7]{Lurie-2009} that if $\caP\to\caO$ is a
  map of $\frS$-free operads and $\caP$ is proper, then
  $\caO\otimes^\bbL_\caP({-})$ models the left-adjoint to the forgetful functor
  $\Alg_\caO\to \Alg_\caP$, which, in turn, is given by precomposing an operad
  map $\caO\to \caS$ with $\caP\to \caO$.
\end{rmk}

\begin{expl}
  The operad $\dE\infty$, called the ‘commutative $\infty$-operad’ in
  \cite[{}2.1.1.18]{Lurie}, is by definition the identity $\Fin_*\to\Fin_*$ and
  hence the terminal object in $\Op_\infty$. It agrees with the operadic nerve
  of the topological operad $E_\infty$, see \cite[{}5.1.1.5]{Lurie}.
\end{expl}

\begin{rmk}
  May’s recognition principle \cite{May-1972} can be formulated
  homotopy-coherently as in \cite[{}5.2.6.10]{Lurie}: Taking $d$-fold loop
  spaces becomes an equivalence of $\infty$-categories
  $\Omega^d\colon \caS^{\ge d}_*\to \Alg_{\dE d}$, and the $d$-fold bar
  construction $\B^d$ in the sense of \cref{rmk:bar} is a model for its
  essential inverse.
\end{rmk}

\begin{rmk}
  In \cite[§\,2.4.3]{Lurie}, the author constructs, out of an $\infty$-category
  $\caC$, an operad $\caC^\sqcup$, called the \emph{co-cartesian
    operad}. Intuitively, its colours are the objects of $\caC$, and its space
  of operations $(X_1,\dotsc,X_r)\to Y$ is given by $\prod_i
  \Map_\caC(X_i,Y)$. If $\caO$ is a \emph{unital} operad (i.e.\ for each object
  $X$, the space of nullary operations $()\to X$ is contractible), then we have a
  natural equivalence $\Op_\infty(\caO,\caC^\sqcup)\to\Fun(\caO\ula{1},\caC)$ of
  $\infty$-categories, where $(-)\ula{1}$ denotes the underlying
  $\infty$-category of unaries, see \cite[{}2.4.3.16]{Lurie}. This implies that
  $(-)^\sqcup$ is a right-adjoint functor from $\Cat_\infty$ to the full
  subcategory of $\Op_\infty$ spanned by unital $\infty$-operads, its
  left-adjoint given by $(-)\ula{1}$.

  As right-adjoints preserve terminal objects, the unital operad $E_\infty$ is
  of the form $*^{\sqcup}$, where $*$ is a point.  For two $\infty$-operads $\caO$
  and $\caO'$ and an $\infty$-category $\caC$, \cite[{}2.4.3.18]{Lurie}
  establishes an equivalence
  $\Op_\infty(\caC^\sqcup\times_{\Fin_*}\caO,\caO')\to
  \Fun(\caC,\Op_\infty(\caO,\caO'))$, which is natural in all arguments. For
  $\caO=\dE\infty$ and $\caO'=\caS$, we get an equivalence  of $\infty$-categories
  $\Alg_{\caC^\sqcup}\simeq \Fun(\caC,\Alg_{\dE\infty})$,
  which is natural in $\caC$.
\end{rmk}

\begin{nota}
  For any $\infty$-category $\caC$, precomposing functors $*\to \caC$ with the
  ter\-minal map $L\to *$ gives rise to the \emph{constant diagram functor}
  $\Delta_L\colon \caC\to \Fun(L,\caC)$. It is right-adjoint to
  $\colim_L\colon \Fun(L,\caC)\to \caC$ if the latter exists.\looseness-1
\end{nota}

\begin{lem}
  Let $Q$ be an $L^\sqcup$-algebra, regarded as a functor
  $L\to \Alg_{\dE\infty}$ as above. Then we have an equivalence of connective
  spectra
  \[\B^\infty(E_\infty\otimes^\bbL_{L^\sqcup} Q) \simeq
    \colim_L (\B^\infty Q).\]
\end{lem}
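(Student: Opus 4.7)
The plan is to decompose the statement into two formal adjunction arguments. First, I would identify $E_\infty\otimes^\bbL_{L^\sqcup} Q$ with $\colim_L Q$ inside $\Alg_{\dE\infty}$; second, I would use that $\B^\infty\colon\Alg_{\dE\infty}\to\Sp^\conn$ is itself a left adjoint and hence commutes with colimits.

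For the first step, I would consider the unique map $L\to *$ of $\infty$-categories; under the right adjoint $(-)^\sqcup$, this becomes the operad map $L^\sqcup\to *^\sqcup = \dE\infty$ defining the pushforward in question. The equivalence $\Op_\infty(\caC^\sqcup\times_{\Fin_*}\caO,\caO')\simeq \Fun(\caC,\Op_\infty(\caO,\caO'))$ from \cite[2.4.3.18]{Lurie} is natural in $\caC$, so applying naturality to $L\to *$ (with $\caO=\dE\infty$ and $\caO'=\caS$) turns the forgetful functor $\Alg_{\dE\infty}\to\Alg_{L^\sqcup}$ into the constant-diagram functor $\Delta_L\colon \Alg_{\dE\infty}\to\Fun(L,\Alg_{\dE\infty})$. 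Passing to left adjoints, this identifies $E_\infty\otimes^\bbL_{L^\sqcup}(-)$ with $\colim_L$, hence $E_\infty\otimes^\bbL_{L^\sqcup} Q\simeq \colim_L Q$. For the second step, the recognition principle \cite[5.2.6.10]{Lurie} identifies $\Sp^\conn$ with the full subcategory of group-like $\dE\infty$-algebras; the inclusion of the latter into $\Alg_{\dE\infty}$ admits a left adjoint given by group completion, and composed with the equivalence this exhibits $\B^\infty$ as left adjoint to $\Omega^\infty\colon \Sp^\conn\to\Alg_{\dE\infty}$. Since left adjoints preserve colimits, $\B^\infty(\colim_L Q)\simeq \colim_L(\B^\infty Q)$, and combining with the first step yields the lemma.

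The main obstacle will be bookkeeping rather than substance: namely, verifying that the forgetful functor $\Alg_{\dE\infty}\to\Alg_{L^\sqcup}$ implicit in the pushforward construction genuinely corresponds to $\Delta_L$ under the equivalence $\Alg_{L^\sqcup}\simeq\Fun(L,\Alg_{\dE\infty})$. This is the point at which the specific operad map $L^\sqcup\to\dE\infty$ produced by the right-adjoint $(-)^\sqcup$ has to be matched with the map used to define $\otimes^\bbL_{L^\sqcup}$, so that the naturality statement \cite[2.4.3.18]{Lurie} applies in the desired form. Once this compatibility between the two natural equivalences is in hand, the remainder is a purely formal consequence of the adjunctions above.
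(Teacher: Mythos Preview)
Your proposal is correct and follows essentially the same route as the paper: both identify the forgetful functor $\Alg_{\dE\infty}\to\Alg_{L^\sqcup}$ with $\Delta_L$ under the equivalence $\Alg_{L^\sqcup}\simeq\Fun(L,\Alg_{\dE\infty})$ so that the pushforward becomes $\colim_L$, and both invoke that $\B^\infty$ is left adjoint (via group-completion and the recognition principle) to commute it past the colimit. The paper presents the two steps in the opposite order and cites \cite[5.2.6.26]{Lurie} together with \cite[\S 2.1]{Bunke-Tamme} for the group-completion adjunction, but the argument is the same.
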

\begin{proof}
  By \cite[{}5.2.6.26]{Lurie}, the classical equivalence between the category
  $\Sp^\conn$ of connective spectra and group-like $E_\infty$-algebras has been
  made precise in the context of $\infty$-categories, and in
  \cite[§\,2.1]{Bunke-Tamme}, it has been shown that group-completion is
  left-adjoint to the inclusion of group-like $E_\infty$-algebras into the
  category of all $E_\infty$-algebras. This shows that
  $\B^\infty\colon \Alg_{\dE\infty}\to \Sp^\conn$ is a left-adjoint functor
  between $\infty$-categories and hence preserves colimits.  Second, under the
  equivalence $\Alg_{L^\sqcup}\simeq \Fun(L,\Alg_{E_\infty})$, the
  forgetful functor from $E_\infty$-algebras to $L^\sqcup$-algebras is
  identified with precomposing functors $*\to \Alg_{\dE\infty}$ with the
  terminal map $L\to *$, i.e.\ with the constant diagram functor $\Delta_L$. Its
  left-adjoint is given by taking colimits in the $\infty$-category of
  $E_\infty$-algebras.\looseness-1%
\end{proof}

\begin{proof}[Proof of \cref{prop:C}]
  Using \cite[Prop.\,2.2]{Horel-Krannich-Kupers}, there is a diagram
  $\Theta\colon L\to \Op_\infty$, which takes value $E_d$, whose colimit is
  equivalent to the operadic nerve of $\dE[\theta]d$, and for $b_0\in L$ coming
  from the $\theta$-framing $\ell_{0,1}$ of $D^d$, the inclusion
  $E_d\simeq \Theta(b_0)\to \on{colim}(\Theta)\simeq \dE[\theta]d$ is equivalent
  to the operad map from \cref{ex:unfr}.

  Moreover, we
  also we have an equivalence of $\infty$-operads
  $L^\sqcup\simeq \on{colim}_L(\Delta_LE_\infty)$, as one sees by applying the
  $\infty$-Yoneda lemma to the groupoid cores of the natural
  equivalence\looseness-1
  \begin{align*}
    \Op_\infty(L^\sqcup,{-}) &\simeq \Fun(L,\Op_\infty(E_\infty,{-}))
    \\&\simeq \on{lim}_L (\Op_\infty(\Delta_L E_\infty,{-}))
    \\&\simeq \Op_\infty(\on{colim}_L (\Delta_L E_\infty),{-}).
  \end{align*}
  As the $\infty$-category of $\infty$-operads has $L$-indexed colimits, the
  functor $\Delta_L$ is right-adjoint and hence preserves terminal objects. This
  shows that $\Delta_L E_\infty$ is a terminal diagram, so we have an
  (essentially unique) map $\Theta\to \Delta_LE_\infty$. Its colimit is an
  operad map $\dE[\theta]d\to L^\sqcup$, and since $E_\infty$ is terminal, the
  map $\dE[\theta]d\to E_\infty$ is homotopic to the composition
  $\dE[\theta]d\to L^\sqcup\to E_\infty$.
  This shows that we have a commutative diagram of right-adjoints
  \[
    \begin{tikzcd}[row sep=1.5em]
      \Fun(L,\Sp^{\on{cn}})\ar{d}[swap]{\Fun(L,\Omega^\infty)}\ar[rr,"\simeq"] && \lim_L(\Delta_L\Sp^{\on{cn}})\ar{d}{\lim_L (\Delta_L\Omega^\infty)}\\
      \Fun(L,\Alg_{E_\infty})\ar[r,"\simeq"] & \Alg_{L^\sqcup}\ar[r,"\simeq"]\ar{d}[swap]{(\dE[\theta]d\to L^\sqcup)^*} &\lim_L (\Alg_{\smash{\Delta_L E_\infty}})\ar{d}{\lim_L(\Theta\to \Delta_L E_\infty)^*}\\
      & \Alg_{\dE[\theta]d}\ar[r,swap,"\simeq"] &\lim_L(\Alg_\Theta),
    \end{tikzcd}
  \]%
  whence the corresponding diagram of left-adjoints commutes as well.  The
  basepoint $b_0\in L$ gives rise to projection functors
  $\pr{}_{b_0}\colon \kern-1px\lim_L (\Alg_\Theta)\to \Alg_{\smash{E_d}}$ etc.\
  out of the limit categories, resulting in a diagram (where
  $\smash{\Delta_L E_\infty\otimes^\bbL_{\smash\Theta}(-)}$ is the system of
  left-adjoints induced by the map $\Theta\to\Delta_L E_\infty$ of diagrams of
  $\infty$-operads)\looseness-1
  \[
    \begin{tikzcd}[row sep=1.5em]
      \Alg_{\dE[\theta]d}\ar[r,"\simeq"]\ar{d}[swap]{L^\sqcup\otimes^\bbL_{\dE[\scaleto{\theta}{3.6pt}]{\scaleto{d}{3.5pt}}}(-)} & \lim_L(\Alg_{\smash{\Theta}})\ar{r}{\pr_{b_{\scaleto{0}{3.5pt}}}}\ar{d}[swap]{\lim_L(\Delta_L E_\infty\otimes_{\smash \Theta}^\bbL(-))} & \Alg_{\smash{E_d}}\ar{d}{E_\infty\otimes^\bbL_{\smash{E_{\scaleto{d}{3.5pt}}}}(-)}\\
      \Fun(L,\Alg_{\smash{E_\infty}})\simeq \Alg_{L^\sqcup}\ar{d}[swap]{\Fun(L,\B^\infty)}\ar[r,"\simeq"] & \lim_L (\Alg_{\smash{\Delta_L E_\infty}})\ar{d}[swap]{\lim_L(\Delta_L\B^\infty)}\ar{r}{\pr{}_{b_{\scaleto{0}{3.5pt}}}} & \Alg_{\smash{E_\infty}}\ar[d,"\B^\infty"]\\
      \Fun(L,\Sp^{\on{cn}})\ar[r,"\simeq"] &
      \lim_L(\Delta_L\Sp^{\on{cn}})\ar{r}{\pr{}_{b_{\scaleto{0}{3.5pt}}}} &
      \Sp^{\on{cn}}.
    \end{tikzcd}
  \]
  Here the commutativity of the left squares is justified by the aforementioned
  diagram of right-adjoints, while the commutativity of the right squares is
  immediate as the central vertical maps are induced by morphisms of diagrams.
  We moreover point out the the top horizontal composition agrees up to
  equivalence with $\Op_\infty(\Theta(b_0)\to \on{colim}(\Theta),\caS)$ and
  hence with the ‘underlying $E_d$-algebra’ functor $U$ from \cref{rmk:bar}.

  Thus, for each $\dE[\theta]d$-algebra $A$, the left vertical
  composition $(\Fun(L,\B^\infty))(L^\sqcup\otimes^\bbL_{\dE[\theta]d} A)$,
  which is an $L$-indexed diagram in connective spectra, takes value
  $\B^\infty(E_\infty\otimes_{\dE d}^\bbL UA)$.
  We hence are left to determine, for a general $E_d$-algebra $\frA$, the spectrum
  $\B^\infty(E_\infty\otimes^\bbL_{\dE d} \frA)$. Here we see that for a given
  connective spectrum $Y$, we have a chain of natural equivalences of mapping
  spaces\looseness-1
  \begin{align*}
    \Map_{\Sp^\conn}(\B^\infty(E_\infty\otimes^\bbL_{E_d} \frA),Y)
    &\simeq \Map_{\Alg_{E_{\scaleto{d}{3.5pt}}}}(\frA,\Omega^d\Omega^{\infty-d}Y)
    \\ &\simeq \Map_{\smash{\caS^{\ge d}_*}}(\B^d \frA,\Omega^{\infty-d}Y)
    \\ &\simeq \Map_{\Sp^\conn}(\Sigma^{\infty-d}\B^d \frA,Y).
  \end{align*}
  This proves the statement by virtue of the $\infty$-Yoneda lemma.
\end{proof}

\begin{expl}\label{ex:loop}
  A \emph{$d$-connective retractive space} is a fibration $\xi\colon X\to L$
  with a section $\sigma$ such that the fibre $X_b$ over each $b\in L$
  is $(d-1)$-connected. For a compact $\theta$-framed manifold $(W,\ell)$ with
  boundary, we let $\Map^\theta_\partial(W,X)$ be the space of maps
  $s\colon W\to X$ with $\xi\circ s=\bar\ell$ and
  $s|_{\partial W}=\sigma\circ \bar\ell$, where $\bar\ell\colon W\to L$ is the
  map induced by $\ell$ among base spaces. Put differently, it is the space of
  sections of $\bar\ell^*X$ that coincide with $\bar\ell^*\sigma$ at
  $\partial W$.\looseness-1

  For the $\theta$-framed manifold $(D^d,\ell_{0,1})$, the space
  $\Omega^d_\theta X\coloneqq \Map^\theta_\partial(D^d,X)$ carries the structure
  of an $\dE[\theta]d$-algebra, generalising the usual $E_d$-structure on
  $\Omega^d X$ (here we use that $\bar\ell_{0,1}$ is constant with value $b_0$). Its underlying
  $E_d$-algebra $U\Omega^d_\theta X$ is equivalent to $\Omega^d X_{\smash{b_0}}$.
  Generalising the scanning map from \cite[Prop.\,2]{CFB-1987},
  \emph{non-abelian Poincaré duality} \cite[Thm.\,5.5.6.6]{Lurie}
  establishes an equivalence between $\int^\theta_W\Omega^d_\theta X$ and
  $\Map^\theta_\partial(W,X)$.

  As before, we would like to consider the \emph{moduli space} of such objects,
  and we do so by letting the $\theta$-framing vary: Let $\Map_\partial(W,L)$ be
  the space of maps $f\colon W\to L$ with
  $f|_{\partial W}=\bar\ell|_{\partial W}$, and let $\Map_\partial(W,X)$ be the
  space of maps $g\colon W\to L$ with
  $g|_{\partial W}=\sigma\circ \bar\ell|_{\partial W}$. Then we consider the homotopy pullback
  \[
    \begin{tikzcd}
      \ul{\smash\Map}^\theta_\partial(W,X)\ar[r]\ar[d]\arrow[dr, phantom, "\lrcorner", very near start] & \Map_\partial(W,X)\ar[d]\\
      \Fr^\theta_\partial(W,\ell)\ar[r] & \Map_\partial(W,L)
    \end{tikzcd}
  \]
  The space $\ul{\smash{\Map}}^\theta_\partial(W,X)$ carries a
  $\Diff_\partial(W)$-action, and the above equivalence can be enhanced to a
  $\Diff_\partial(W)$-equivariant equivalence
  $|\B_\bullet(\udbE[\theta]W,\dbE[\theta]d,\Omega^d_\theta X)|\simeq
  \ul{\smash{\Map}}^\theta_\partial(W,X)$. We
  hence obtain an equivalence
  \[W^\theta[\Omega^d_\theta X]\simeq
    \ul{\smash{\Map}}^\theta_\partial(W,X)\sslash \Diff_\partial(W).\] The right
  side can be interpreted as the moduli space of $\theta$-framed manifolds of
  type $(W,\ell_W)$, together with a map to the retractive space $X$,
  abbreviated by $\caM^\theta_\partial(W,\ell_W)\ula{X}$.
\end{expl}

Coming back to our situation of $d=2n$ and $(W,\ell_W)=(W_{g,1},\ell_{g,1})$, we
use the equivalence
$B^{2n}U\Omega^{2n}_\theta X=B^{2n}\Omega^{2n}X_{\smash{b_0}}\simeq
X_{\smash{b_0}}$ to conclude via \cref{thm:B} the following:

\begin{cor}\label{cor:e}
  Let $\theta\colon L\to\on{BO}(2n)$ be a spherical tangential structure with
  $n$-connected $L$. Moreover, let $X$ be a $2n$-connective retractive space. Then we have
  a weak equivalence
  \[\on{colim}_{g\to \infty}\bigl(\caM^\theta_\partial(W_{g,1},\ell_{g,1})\ula{X}\bigr)^+\simeq\Omega^\infty_0\MT\theta\times\Omega^\infty\bigl((\Sigma^{\infty-2n}X_{b_0})_\h{\Omega L}\bigr).\]
\end{cor}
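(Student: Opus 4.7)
The plan is to apply \cref{thm:B} to the $\dE[\theta]{2n}$-algebra $A\coloneqq \Omega^{2n}_\theta X$ from \cref{ex:loop} and then translate both sides geometrically.

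First I would invoke the non-abelian Poincaré duality equivalence recorded in \cref{ex:loop}, which identifies $W^\theta_{g,1}[\Omega^{2n}_\theta X]$ with the moduli space $\caM^\theta_\partial(W_{g,1},\ell_{g,1})\ula{X}$, compatibly with stabilisations. Since $X$ is $2n$-connective, the fibre $X_{b_0}$ is $(2n-1)$-connected, so the underlying $E_{2n}$-algebra $U\Omega^{2n}_\theta X\simeq \Omega^{2n}X_{b_0}$ is path-connected. By \cref{lem:Actd}, each $W^\theta_{g,1}[\Omega^{2n}_\theta X]$ is therefore path-connected, so the group-completion theorem as applied in \cref{constr:Wgstab} gives
\[\on{hocolim}_{g\to \infty}\bigl(\caM^\theta_\partial(W_{g,1},\ell_{g,1})\ula{X}\bigr)^+ \simeq \Omega_0\B\dW[\theta]{*,1}[\Omega^{2n}_\theta X].\]

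Next I would plug $A=\Omega^{2n}_\theta X$ into the formula of \cref{thm:B}. Restricting to the base-point component of the loop space eliminates the $\Z$-factor, leaving $\Omega^\infty_0\MT\theta$ times $\Omega^\infty\bigl((\Sigma^{\infty-2n}\B^{2n}U\Omega^{2n}_\theta X)_\h{\Omega L}\bigr)$. To identify the spectrum in the second factor I would appeal to May's recognition principle (or the $\infty$-categorical version employed in the proof of \cref{prop:C}): since $X_{b_0}$ is $(2n-1)$-connected, $\B^{2n}\Omega^{2n}X_{b_0}\simeq X_{b_0}$, turning the bracket into $(\Sigma^{\infty-2n}X_{b_0})_\h{\Omega L}$ as claimed.

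No substantial obstacle arises; the corollary is essentially a bookkeeping assembly of previously-established machinery. The only point worth flagging is that the connectivity hypothesis on $X$ does double duty: it ensures path-connectedness of the decorated moduli spaces (so that the group-completion theorem may be applied componentwise) and it is precisely what is needed to invert the iterated bar construction via May's recognition. Both uses are accommodated simultaneously by the assumption that $X$ be $2n$-connective.
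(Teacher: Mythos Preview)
Your proposal is correct and follows exactly the same route as the paper: specialise \cref{thm:B} to $A=\Omega^{2n}_\theta X$, use the identification $W^\theta_{g,1}[\Omega^{2n}_\theta X]\simeq \caM^\theta_\partial(W_{g,1},\ell_{g,1})\ula{X}$ from \cref{ex:loop}, and simplify via $\B^{2n}U\Omega^{2n}_\theta X\simeq \B^{2n}\Omega^{2n}X_{b_0}\simeq X_{b_0}$. Your explicit invocation of \cref{lem:Actd} and the group-completion step from \cref{constr:Wgstab} to justify passing to the plus-construction of the colimit is a detail the paper leaves implicit, but it is the right thing to check.
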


\begin{outl}
  It would be interesting to have, for an $\dE[\theta]d$-algebra $A$, a
  description for the action of $\Omega L$ on $\Sigma^{\infty-d}\B^d UA$.  I
  expect the following to hold: For each euclidean $d$-dimensional vector space
  $V$, there is an operad $E_V$ equivalent to $E_d$, defined exactly as in
  \cref{ex:Ed}, and a $d$-fold bar construction
  $\B^V\colon \Alg_{E_V}\to \caS^{\ge d}_*$.  If $\theta\colon L\to \on{BO}(d)$
  is a tangential structure, then we obtain the diagram
  $\Theta\colon L\to\on{Op}_\infty$ from
  \cite[Prop.\,2.2]{Horel-Krannich-Kupers} by applying the construction
  $V\mapsto E_V$ fibrewise to the euclidean vector bundle
  $\theta^*V_d$. By applying $V\mapsto \B^V$
  fibrewise, we then should obtain a morphism $\Alg_\Theta\to \Delta_L\caS_*^{\ge d}$ of
  $L$-indexed diagrams in $\Cat_\infty$. Passing to the limit, this gives rise to a 
  functor
  \[\textstyle\B^d_\theta\colon \Alg_{\dE[\theta]d}\to \on{Fun}(L,\caS^{\ge d}_*),\]
  which we shall call the \emph{$\theta$-framed bar construction}. If $\theta$
  comes from a subgroup $G\subseteq\on{O}(d)$, then I expect this construction
  to coincide with the ($1$-categorical) $G$-equivariant bar construction from
  \cite{Salvatore-Wahl}. For each $d$-connective retractive space $X$ over $L$,
  I expect $\B^d_\theta\Omega^d_\theta X$ to be the functor corresponding to the
  fibration $X\to L$.

  The fibrewise one-point compactification $\theta^*V_d$ gives rise to a diagram
  $\theta^* V_d^\infty\colon L\to\mathcal{S}^{\ge d}_*$, and I
  expect that a variation of the above Yoneda argument shows that
  $\B^\infty(L^\sqcup\otimes^\bbL_{\dE[\theta]d} A)$ is equivalent to the
  $L$-parametrised mapping spectrum
  $\on{map}(\Sigma^\infty\theta^*V_d^\infty,\Sigma^\infty\B^d_\theta A)$.

  It seems to me that a formal argument for such a description would require a
  deeper analysis of the $\infty$-categorical bar construction and its
  naturality, and hence would go beyond the scope of this article.
\end{outl}

\label{sec:eqBar}

\printbibliography[heading=bibintoc]

\addr{Florian Kranhold}{%
  Karlsruhe Institute of Technology,
  Englerstraße 2,
  76131 Karlsruhe,
  Germany,
  \mail{kranhold@kit.edu}.}

\end{document}